\newtheorem{theorem}{Theorem}[section]
\newtheorem{proposition}[theorem]{Proposition}
\newtheorem{lemma}[theorem]{Lemma}
\newtheorem{corollary}[theorem]{Corollary}
\newtheorem{remark}[theorem]{Remark}
\theoremstyle{definition}
\newtheorem{definition}[theorem]{Definition}
\numberwithin{equation}{section}
\numberwithin{figure}{section}
\begin{document}
\title
{Stochastic bifurcation of a three-dimensional stochastic Kolmogorov system}

\author{Dongmei Xiao\footnote{School of Mathematical Sciences, CMA-Shanghai, Shanghai Jiao Tong University, Shanghai 200240, China, e-mail: \href{mailto:xiaodm@sjtu.edu.cn}{xiaodm@sjtu.edu.cn}}         \and
  Deng Zhang\footnote{School of Mathematical Sciences, CMA-Shanghai, Shanghai Jiao Tong University, 200240, Shanghai, China, e-mail: \href{mailto:dzhang@sjtu.edu.cn}{dzhang@sjtu.edu.cn}}
       \and Chenwan Zhou\footnote{School of Mathematical Sciences, CMA-Shanghai, Shanghai Jiao Tong University, 200240, Shanghai, China; Department of Mathematical Sciences, Durham University, DH1 3LE, UK, e-mail: \href{mailto:daydayupzcw@sjtu.edu.cn}{daydayupzcw@sjtu.edu.cn}}}
 \date{}

\maketitle
\begin{abstract}
In this paper we systematically investigate the stochastic bifurcations 
of both ergodic stationary measures and global dynamics for stochastic Kolmogorov differential systems, 
which relate closely to the change of the sign of Lyapunov exponents. 
It is derived that there exists a threshold $\sigma_0$ such that,
if the noise intensity $\sigma \geq\sigma_0$,
the noise destroys all bifurcations of the deterministic system
and the corresponding stochastic Kolmogorov system is uniquely ergodic.
On the other hand, 
when the noise intensity $\sigma<\sigma_0$,
the stochastic system undergoes bifurcations 
from the unique ergodic stationary measure 
to three different types of ergodic stationary measures: 
(I) finitely many ergodic measures supported on rays, 
(II) infinitely many ergodic measures supported on rays, 
(III) infinitely many ergodic measures supported on invariant cones. 
Correspondingly, 
the global dynamics undergo similar bifurcation phenomena, 
which even displays infinitely many Crauel random periodic solutions in the sense of \cite{ELR21}.   
Furthermore, we prove that as $\sigma$ tends to zero,
the ergodic stationary measures converge to either Dirac measures supported on equilibria, 
or to Haar measures supported on non-trivial 
deterministic periodic orbits. 

\medskip
\noindent
{\bf MSC2020 subject classifications:}  60H10,  37G35, 37H15,  34F05. 

\medskip
\noindent
{\bf Keywords:} Stochastic Kolmogorov system, Lyapunov exponent, stochastic bifurcation, ergodicity, Crauel random periodic solution.
\end{abstract}
\tableofcontents

\section{Introduction and main results}
\begin{sloppypar}

\subsection{Background}

Kolmogorov system is the classical model in population dynamics proposed by Kolmogorov \cite{kl}, which describes the growth rate of populations in a community of $n$ interacting species and is defined by the following system of ordinary differential equations
\begin{equation}\label{sys1.1}
\frac{dx_i(t)}{dt}=x_i(t)P_i(x_1(t),...,x_n(t)), \ \ i=1,\cdots, n,
\end{equation}
where $x_i(t)$ represents the population  number
(density) of the $i$-th species at time $t$ and
$P_i \in C(\mathbb{R}^n)$ is its per capita growth rate. This model has played an important role in describing  the behavior of the interactions of species in population ecology, 
and has been widely used in many areas, such as game dynamics, network dynamics, turbulence dynamics, see \cite{busse1, busse3,Hofbauer, Knebel1, Knebel2} and references therein. 
As pointed out by Smale \cite{smale},  
the dynamic behavior of any given $m$-dimensional dynamical system can be realized by Kolmogorov system \eqref{sys1.1} with $n>m$ under some competitive conditions.
Thus, the rich dynamics of the Kolmogorov system \eqref{sys1.1}
has attracted significant interest in the literature,
 see, e.g., \cite{hirschSmith,llibre, Smith_2017, Zeeman} and references therein.

In this paper we consider the 3D cubic Kolmogorov system driven by linear multiplicative Wiener noise
\begin{equation}\label{sys5}
\begin{cases}
dx_1=x_1(\alpha-\alpha x_1^2-(2\alpha+d_1)x_2^2+d_2x_3^2)dt+\sigma x_1dW_t ,\\
dx_2=x_2(\alpha+d_1x_1^2-\alpha x_2^2-(2\alpha+d_3)x_3^2)dt+\sigma x_2dW_t,\\
dx_3=x_3(\alpha-(2\alpha+d_2)x_1^2+d_3x_2^2-\alpha x_3^2)dt+\sigma x_3dW_t,\\
\end{cases}
\end{equation}
where $\sigma>0$ represents the strength of noise,
$(W_t)$ is the Wiener process,
and the drift term is parameterized by $\alpha>0$ and
$d_i\in \mathbb{R}$, here $i=1,2,3$.
In particular,
in the absence of noise,
system \eqref{sys5}
reduces to the deterministic cubic Kolmogorov system
\begin{equation}\label{sys6}
\begin{cases}
\frac{dx_1}{dt}=x_1(\alpha-\alpha x_1^2-(2\alpha+d_1)x_2^2+d_2x_3^2),\\
\frac{dx_2}{dt}=x_2(\alpha+d_1x_1^2-\alpha x_2^2-(2\alpha+d_3)x_3^2),\\
\frac{dx_3}{dt}=x_3(\alpha-(2\alpha+d_2)x_1^2+d_3x_2^2-\alpha x_3^2). \\
\end{cases}
\end{equation}

The main interest of the present work is to 
characterize the stochastic bifurcation 
of both ergodic stationary measures and 
global dynamics for the stochastic Kolmogorov system \eqref{sys5}.

{Bifurcation} of dynamical system 
usually describes {sudden qualitative}
or {topological changes} of the long-term dynamical behavior of dynamical systems,
when some parameters of dynamical systems 
vary continuously in small neighborhoods of a value.
This particular parameter value is called {\it bifurcation value (or bifurcation point) },
and the corresponding changing parameter is called {\it bifurcation parameter}.
For random dynamical systems, stochastic bifurcation is often considered
from the perspective of either steady-state distribution or ergodic invariant measure.
The phenomenological bifurcation 
concerns sudden changes of stationary distributions 
as bifurcation parameters change in a small neighborhood of a bifurcation value, 
while the dynamical bifurcation describes changes 
of ergodic invariant measures. 

Stochastic bifurcation phenomena have attracted considerable interests in the literature 
and are extensively studied for dynamical models 
driven by additive noise. 
For instance,   
pitchfork bifurcations with additive noise 
were studied in \cite{BEN23,FL2}. 
In \cite{Doan1}, 
three dynamical phases are identified 
which include a random strange attractor 
with positive Lyapunov exponent.  
See also \cite{CE23} for the positivity of Lyapunov exponent 
for normal formal of a Hopf bifurcation perturbed by additive noise.

Positivity of Lyapunov exponents usually  
relates to chaotic phenomena of dynamics,  
see, e.g., the nice explanations by Young \cite{Young1,Young13} 
and Bedrossian, Blumenthal and Punshon-Smith \cite{BBP23}. 
One typical model is the 2D 
Navier-Stokes equation (NSE) driven by additive noise. 
Ergodicity for this stochastic fluid model 
is well-known, 
see, e.g.,  
\cite{BKL02, FM95,HM06,KNS20, KS00, EMS01} 
and references therein. 
In \cite{BBS22}, 
Bedrossian, Blumenthal and Punshon-Smith proved the  
positivity of the top Lyapunov exponent 
for  
the Lagrangian flow generated by 2D stochastic NSE with non-degenerate Gaussian noise. 
More general Euler-like systems including stochastic Lorenz 96 system have been studied in \cite{BBS22.1}. 
For the Lagrangian flow of 2D stochastic NSE with degenerate bounded noise, 
the positivity of the top Lyapunov exponent 
has  been recently proved by Nersesyan 
and the last two named authors \cite{NZZ24}. 

Compared to the extensive results 
in the case of additive noise, 
there are not many results on stochastic bifurcations 
in the multiplicative noise case, 
which is another typical noise for 
stochastic models. 
For instance, 
a stochastic Hopf bifurcation was studied in \cite{PB1} 
for SDE with multiplicative noise. 
For 
pull-back trajectories and ergodic stationary measures
for stochastic Lotka-Volterra systems with multiplicative noise, 
we refer to \cite{jiang1}. 
Recently,  
Engel, Lamb and Rasmussen \cite{ELR19} 
established the existence of a bifurcation for a stochastically driven limit cycle, 
indicated by the change of the sign 
of top Lyapunov exponents, 
which relates to an open problem in \cite{Young2, Young3, Young1}.

\medskip
In this paper 
we give a complete characterization of the  bifurcation phenomena for the 3D stochastic Kolmogorov system \eqref{sys5}, 
depending upon the strength of the noise and the parameters $\alpha$ and $d_i$, $i=1,2,3$. 
The stochastic Kolmogorov system undergoes bifurcations  from a unique ergodic stationary measure 
to three different types of ergodic stationary measures: 
(I) finitely many ergodic measures supported on rays, 
(II) infinitely many ergodic measures supported on rays, 
(III) infinitely many ergodic measures supported on invariant cones.  
Interestingly, 
the bifurcation phenomena relate closely to 
the change of the sign of Lyapunov exponents.

Furthermore, we systematically investigate the classification of stochastic dynamics through the perspective of pull-back $\Omega$-limit sets.  
It is shown that 
the bifurcation phenomena exhibit 
for four different types of pull-back $\Omega$-limit sets, 
which again relate to different signs of Lyapunov exponents: 
(I') the unique random equilibrium $O$, 
(II') finitely many random equilibria, 
(III') infinitely many  random equilibria,  
(IV') infinitely many Crauel random periodic solutions.

In addition, we prove that, via the vanishing noise limit, 
the ergodic stationary measures of stochastic Kolmogorov system \eqref{sys5} converges to 
either Dirac measures supported on equilibria 
or Haar measures supported on 
periodic orbits 
to the deterministic system  \eqref{sys6}.

\subsection{Main results}  

Let us first mention that the 3-D cubic Kolmogorov system \eqref{sys1.1} with an invariant sphere 
has been recently studied in  \cite{xiao}. 
It is shown that system \eqref{sys6} has the following invariant sphere in $\mathbb{R}^3$
\begin{equation}\label{shpere}
 \mathbb{S}^2=\{(x_1,x_2,x_3): \ x_1^2+x_2^2+x_3^2=1\}\subset \mathbb{R}^3,
\end{equation}
and $\mathbb{S}^2$  is an isolated invariant set of system \eqref{sys6} if and only if  $\alpha\not=0$.
Without loss of generality, we consider the case $\alpha>0$ for systems \eqref{sys5} and \eqref{sys6}. 

Moreover, system \eqref{sys6} is invariant under the coordinate transforms
$(x_1, x_2, x_3)$ $\rightarrow (-x_1, x_2, x_3)$, $(x_1, x_2, x_3)\rightarrow(x_1, -x_2, x_3)$
and $(x_1, x_2, x_3)\rightarrow(x_1, x_2, -x_3)$. Moreover,
the planes $x_i=0$, $i=1,2,3$, are invariant
and the flow generated by \eqref{sys6} is symmetric with respect to these planes.
Hence, we focus on system \eqref{sys6} in $\mathbb{R}^3_+$ in the sequel.

\subsubsection{Stochastic bifurcation of ergodic stationary measures}

For any ergodic stationary measure 
$\mu \in \mathcal{P}(\mathbb{R}^3_+)$, 
let $\lambda_i(\mu)$, $i=1,2,3$, denote the corresponding Lyapunov exponents.  

We also need some notations 
for the geometrics related to system \eqref{sys6}. 
For any $y\in \mathbb{R}^3_+$, 
let $\mathcal{L}(y):=\{\lambda y: \lambda>0\}$
denote the ray passing through the point $y$ 
and $\overline{\mathcal{L}(y)}$ 
its closure in $\mathbb{R}^3_+$.
Moreover, 
for any $h\in ({h}^*, \infty)$, where
    \begin{equation}  \label{h*-def}
{h}^*:= \prod\limits_{i=1}^3 \left(\frac{\alpha+d_{4-i}}{3\alpha+d_1+d_2+d_3}\right)^{-\frac{\alpha+d_{4-i}}{3\alpha+d_1+d_2+d_3}}, 
\end{equation}
let $\Gamma(h)$ denote the closed orbit 
to system \eqref{sys6} 
and 
$\Lambda(h):=\{\lambda y: y\in \Gamma(h), \lambda\ge 0\}$  
the corresponding invariant cone.

The first main result of this paper 
is formulated in Theorem \ref{bifurcations} below, which describes the bifurcation 
of ergodic stationary measures 
depending upon the strength of the noise 
and the parameters in system \eqref{sys6}. 

\begin{theorem} \label{bifurcations} (Bifurcation of ergodic stationary measures) There exists a bifurcation parameter $\sigma^2$ and a bifurcation point $2\alpha$, such that the stochastic Kolmogorov system \eqref{sys5} undergoes a bifurcation of ergodic stationary measures. More precisely,
\begin{itemize}
  \item [(i)] When $\sigma^2>2\alpha$, system \eqref{sys5} has a
        unique ergodic stationary measure $\delta_O$, which corresponds to the unique globally attracting random equilibrium $O$, 
        and $\lambda_i(\delta_O) < 0$, $i=1,2,3$.         
  \item [(ii)] When $\sigma^2=2\alpha$, system \eqref{sys5} has a unique ergodic stationary measure $\delta_O$, which corresponds to the unique globally attracting random equilibrium $O$, 
  but with $\lambda_i(\delta_O) = 0$, $i=1,2,3$.
  
  \item [(iii)] When $\sigma^2<2\alpha$, system \eqref{sys5} has other ergodic stationary measures except $\delta_O$. 
  The random equilibrium $O$ is however unstable and 
  $\lambda_i(\delta_O) > 0$, $i=1,2,3$. 
  
  Furthermore, the other ergodic stationary measures exhibit finer bifurcation phenomena
      depending on the sign
      of the parameters $\alpha+d_i$ (
      $i=1,2,3$), which are related to the sign of Lyapunov exponents of random non-zero equilibria $u_g(\omega)\textbf{e}_i$ 
      {(see Subsection \ref{logistic-type} below), $i=1,2,3$: }
      
      \begin{itemize}
        \item [(iii.1)] 
        If $\prod_{i=1}^3(\alpha+d_i)= 0$, then there exist infinitely many ergodic stationary measures, each of which is supported on a ray $\overline{\mathcal{L}(Q)}$ 
        for some equilibrium $Q$ 
        of the deterministic system \eqref{sys6}.
        
        \item [(iii.2)] 
        If $\alpha+d_i$ are all positive (or all negative) for $i=1,2,3$, 
        then there exist 5 ergodic stationary measures 
        supported on $O$ 
        or rays $\overline{\mathcal{L}(Q)}$ 
        corresponding to 4 equilibria 
        $Q (\not= O)$ of \eqref{sys6}, and infinitely many ergodic stationary measures supported on invariant cones 
        $\Lambda(h)$,  
        where $h>h^*$ with $h^*$ given by \eqref{h*-def}.  
        
        \item [(iii.3)] 
        If  $\prod_{i=1}^3(\alpha+d_i) \neq 0$, and $(\alpha+d_i)(\alpha+d_j)<0$ for some $i\neq j$, $i,j\in \{1,2,3\}$,
  then there are only 4 ergodic stationary measures, 
  supported on $O$ 
  or  rays $\overline{\mathcal{L}(Q)}$ 
  corresponding to 3 equilibria $Q$ of \eqref{sys6}. 
      \end{itemize}
\end{itemize}
\end{theorem}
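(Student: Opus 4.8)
The plan is to exploit the radial structure of the linear multiplicative noise, which drives all three components through the \emph{same} Wiener process $W_t$. Writing $u_i=\ln x_i$ and applying It\^o's formula gives
\[
du_i=\Big(P_i-\tfrac12\sigma^2\Big)\,dt+\sigma\,dW_t,\qquad i=1,2,3,
\]
so the noise cancels in every difference $u_i-u_j$; since each $P_i$ equals $\alpha$ plus a form that is homogeneous quadratic (and diagonal) in $x$, the differences $P_i-P_j$ carry no constant term and equal $R:=x_1^2+x_2^2+x_3^2$ times a function of the direction $s=x/|x|$ alone. A direct computation of the angular projection gives $\tfrac{d}{dt}s_i=s_i R\,(\alpha+q_i(s))$, so after the random strictly increasing time change $d\tau=R\,dt$ the angular process $s$ on $\mathbb S^2\cap\mathbb R^3_+$ becomes the \emph{autonomous deterministic} flow $\tfrac{d}{d\tau}s_i=s_i(\alpha+q_i(s))$, which is precisely the restriction of \eqref{sys6} to its invariant sphere $\mathbb S^2$. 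A parallel It\^o computation, using $\sum_i x_i^2 P_i=\alpha R(1-R)$, collapses the radial magnitude into the scalar stochastic logistic equation
\[
dR=R\big((2\alpha+\sigma^2)-2\alpha R\big)\,dt+2\sigma R\,dW_t .
\]
This splitting — a one-dimensional stochastic logistic radial equation feeding, through a random clock, a deterministic angular flow whose phase portrait is that of \eqref{sys6} — is the backbone of the argument.

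First I would establish the top-level trichotomy (i)--(iii) from the radial equation alone. It\^o's formula for $\ln R$ produces the drift $(2\alpha-\sigma^2)-2\alpha R$, so the effective growth rate at $R=0$ is exactly the bifurcation quantity $2\alpha-\sigma^2$. The logistic SDE is moreover solvable in closed form, with numerator $\exp\big((2\alpha-\sigma^2)t+2\sigma W_t\big)$ over a denominator built from its time integral; when $\sigma^2>2\alpha$ the numerator decays exponentially, and when $\sigma^2=2\alpha$ the divergence of the exponential functional in the denominator still forces $R(t)\to0$ almost surely, hence $x(t)\to O$ and the uniqueness of $\delta_O$. When $\sigma^2<2\alpha$ the diffusion is instead positive recurrent on $(0,\infty)$ with a nontrivial stationary law, so $O$ turns repelling and further measures appear. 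The exponents at $O$ follow from the linearization $dx_i\approx\alpha x_i\,dt+\sigma x_i\,dW_t$, giving $\lambda_i(\delta_O)=\alpha-\tfrac12\sigma^2$ for each $i$, whose sign is dictated by $\sigma^2$ versus $2\alpha$ and reproduces (i)--(iii) verbatim.

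Next, in the subcritical regime $\sigma^2<2\alpha$, the finer trichotomy (iii.1)--(iii.3) is inherited \emph{entirely} from the deterministic angular phase portrait of \eqref{sys6}, whose organization by the signs of $\alpha+d_i$ is available from \cite{xiao}. The construction lifts each recurrent piece of the angular flow to an ergodic stationary measure of \eqref{sys5}: an invariant ray $\overline{\mathcal L(Q)}$ — in particular a coordinate axis — carries a one-dimensional stochastic logistic-type equation of the kind analyzed in Subsection \ref{logistic-type}, whose stationary law yields the random equilibrium $u_g(\omega)\mathbf{e}_i$ and an ergodic measure supported on that ray, while a closed orbit $\Gamma(h)$ lifts to the invariant cone $\Lambda(h)$, where combining the flow-invariant (Haar) measure on $\Gamma(h)$, transported back through the time change, with the radial stationary law gives an ergodic measure on $\Lambda(h)$. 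Because the interior equilibrium of \eqref{sys6} is a center surrounded by a one-parameter family of periodic orbits $\Gamma(h)$, $h>h^*$, precisely when all $\alpha+d_i$ share a sign, feeding the three deterministic cases into the lift reproduces the stated counts: a continuum of equilibrium rays when $\prod_i(\alpha+d_i)=0$; the five measures on $O$ and four equilibrium rays \emph{together with} the family of cone measures $\Lambda(h)$ when the $\alpha+d_i$ share one sign; and only the four measures on $O$ and three rays in the mixed-sign case. The transverse exponents at the random equilibria $u_g(\omega)\mathbf{e}_i$ are themselves governed by the $\alpha+d_j$, which is why the bifurcation is organized by — and, as claimed, \emph{related to the sign of} — these Lyapunov exponents.

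The hard part will be the \emph{completeness} of these lists, i.e.\ that no other ergodic stationary measure exists. For this I would push an arbitrary ergodic stationary measure $\mu$ forward to the sphere: by the time change its angular marginal is invariant under the deterministic angular flow, so a Poincar\'e--Bendixson analysis on $\mathbb S^2\cap\mathbb R^3_+$ confines its support to the finitely many equilibria and the closed orbits $\Gamma(h)$ catalogued in \cite{xiao}; one then shows the radial conditional law must coincide with the associated logistic stationary distribution, so the lift is unique and $\mu$ agrees with one of the measures already built. The genuinely delicate points are controlling the random clock — ensuring $\int_0^\infty R\,dt=\infty$ almost surely so that the angular flow truly attains its $\omega$-limit set — the degenerate case (iii.1), where the equilibria form a curve rather than isolated points so that a whole ray family survives, and the critical threshold $\sigma^2=2\alpha$, where the radial diffusion is only borderline recurrent; these, rather than the lifts themselves, are where the real work concentrates. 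Finally, the three exponents of each lifted measure are computed by linearizing along the supporting ray or cone, the radial exponent coming from the logistic stationary average and the transverse exponents from the deterministic variational flow, which simultaneously certifies the stability types underlying the bifurcation.
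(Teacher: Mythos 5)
Your radial--angular splitting is, up to the change of variables $R=\|x\|^2$, the paper's own stochastic decomposition formula (Theorem \ref{sc}): the square $g^2$ of the logistic solution in \eqref{sysg} solves exactly your radial SDE, so the random clock, the lifted measures on rays and cones, and the exponents $\lambda_i(\delta_O)=\alpha-\tfrac12\sigma^2$ all track Lemma \ref{lyapunov}, Proposition \ref{thmes} and Propositions \ref{existence}--\ref{uniqueness}. But two of your steps are defective. The first is the critical case (ii): you assert that for $\sigma^2=2\alpha$ the divergence of $\int_0^t e^{2\sigma W_s}\,ds$ forces $R(t)\to 0$ almost surely \emph{forward} in time. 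That is false. For $\sigma^2=2\alpha$ the diffusion $dR=R\big((2\alpha+\sigma^2)-2\alpha R\big)\,dt+2\sigma R\,dW_t$ has scale density $s'(r)=e^{(r-1)/2}/r$, hence $s(0^+)=-\infty$ and $s(+\infty)=+\infty$, so $R$ is recurrent on $(0,\infty)$: almost surely $\limsup_{t\to\infty}R_t=+\infty$ and $\liminf_{t\to\infty}R_t=0$, and forward trajectories oscillate rather than converge. What is true --- and what the paper uses (Lemma \ref{lem1}(i), Theorem \ref{attract}(i)) --- is the \emph{pull-back} convergence $g(t,\theta_{-t}\omega,x)\to 0$ a.s., equivalently forward convergence in law, which yields $P(t,x,\cdot)\rightharpoonup\delta_O$ and then uniqueness of the stationary measure by dominated convergence against an arbitrary stationary $\nu$. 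Your conclusion is recoverable only after this replacement; as written the step fails precisely at the bifurcation value the theorem is about.

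The second defect is the completeness/uniqueness part, which you correctly flag as the hard part but then sketch with a wrong reduction. The angular component $s=x/\|x\|$ is not a Markov process (its speed is the random clock $R$), and the angular marginal of a stationary measure $\mu$ need \emph{not} be invariant under the deterministic sphere flow; stationarity only gives $\int R\,(\nabla f\cdot F)(s)\,d\mu=0$ for the sphere field $F$, i.e.\ invariance of the $R$-\emph{weighted} projection, and using this requires $\int R\,d\mu<\infty$ and control of possible mass at $O$, none of which you address. Likewise, your product construction (radial stationary law times the Haar measure on $\Gamma(h)$) does produce a stationary measure on each cone, but its \emph{ergodicity} is asserted, not proved. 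The paper settles both points by different means: ergodicity and uniqueness on each $\Lambda(h)\setminus\{O\}$ follow from strong Feller plus irreducibility of the conjugated $(H,T)$-system (Proposition \ref{uniqueness}), and completeness of the finite list in case (iii.3) follows from the decomposition $\nu(\cdot)=\sum_{Q\in\mathcal{E}}\nu(\mathcal{A}(Q))\,\mu_Q(\cdot)$ over attracting domains (Corollary \ref{decomposition}), using $P(t,x,\cdot)\rightharpoonup\mu_Q$ for $x\in\mathcal{A}(Q)$. Without these (or equivalent) arguments, your lists of ergodic stationary measures are exhibited but not certified to be exhaustive.
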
 

\begin{remark} 
$(i)$ 
We note that 
the sign of Lyapunov exponents $\lambda_i(\delta_O)$, 
$i=1,2,3$, 
changes in the birfurcation cases $(i)-(iii)$. 
That is, 
the Lyapunov exponents of $\delta_O$ 
are all negative when $\sigma^2>2\alpha$, 
all zero when $\sigma^2=2\alpha$, 
while all positive when $\sigma^2<2\alpha$. 

$(ii) $
For the sign of Lyapunov exponents 
in the case of Theorem \ref{bifurcations} $(iii)$, 
let us take the random equilibrium $u_g(\omega)e_1$ as an example.  
One has that 
$\lambda_1(u_g(\omega)e_1) <0$ 
in all cases of Theorem \ref{bifurcations} $(iii.1)-(iii.3)$. However, 
for the other two Lyapunov exponents 
$\lambda_i(u_g(\omega)e_1)$, $i=2,3$,   
in the case of 
 Theorem \ref{bifurcations} $(iii.1)$ 
it may happen that both are zero, 
in the case of 
 Theorem \ref{bifurcations} $(iii.2)$ 
one is negative and the other is positive, 
while in the case of 
 Theorem \ref{bifurcations} $(iii.3)$ it may happen that
 both are positive or both are negative.  

Let us mention that 
in the zero Lyapunov case in Theorem  \ref{bifurcations} $(iii.1)$, 
there display further bifurcations of global dynamics, 
which will be given in detail in Theorem \ref{main-thm-2} 
$(ii.1_a)$-$(ii.1_c)$ below.

$(iii)$ 
When system \eqref{sys5} has only finite ergodic stationary measures, 
these measures are all
{\it hyperbolic} except the case where $\sigma^2=2\alpha$. 
Here,  
  hyperbolicity means all Lyapunov exponents are non-zero.
  
The change of hyperbolicity 
indeed leads to stochastic bifurcations of ergodic stationary measures.  
For instance, 
the hyperbolicity changes in the three cases of 
Theorem \ref{bifurcations} $(i)$-$(iii)$. 
While  
in the case of Theorem \ref{bifurcations} $(iii.3)$, 
the 4 ergodic stationary measures
are all hyperbolic, 
and thus they do not display further 
bifurcations. 

$(iv)$  
In Subsection \ref{Stochastic bifurcation}, 
we also prove that system \eqref{sys5} undergoes a bifurcation of the densities of ergodic stationary measures
generated by non-zero equilibria. 
More precisely, 
the density 
is an unimodal function 
when $\sigma^2<\alpha$, 
but is decreasing 
when $\alpha\le\sigma^2<2\alpha$ 
(see Theorem \ref{3dp} below). 

$(v)$
In the case of Theorem \ref{bifurcations} $(iii)$, 
the uniqueness of ergodic stationary measures
is derived on every invariant cone 
$\Lambda(h)\setminus\{O\}$, 
by utilizing the strong Feller and irreducibility  of the Markov semigroup 
associated to \eqref{sys5}.

\end{remark}

\subsubsection{Classification of global dynamics via pull-back $\Omega$-limit sets}

 Based on Theorem \ref{bifurcations}, we further 
 derive the complete classification of global stochastic dynamics via pull-back $\Omega$-limit sets. 

Theorem \ref{main-thm-2} below reveals 
the transition 
    from the unique random equilibrium 
    to infinitely many random equilibria, or 
    even to infinitely many Crauel random periodic solutions 
    (see Definition \ref{Crauel-def} 
 in the Appendix), 
related to the change of the sign of Lyapunov exponents 
in Thereom \ref{bifurcations}.

Let $\Omega_x$ 
denote the pull-back $\Omega$-limit set of the trajectories 
 of system \eqref{sys5} 
 starting from $x$. 

\begin{theorem}\label{main-thm-2}(Classification of global  dynamics via pull-back $\Omega$-limit sets) 

For $\mathbb{P}$-a.e. $\omega\in \Omega$ 
and any $x\in
      \mathbb{R}^3_+$, 
the following holds: 

\begin{itemize}
  \item [(i)] When $\sigma^2\ge 2\alpha$, the origin $O$ is the unique random equilibrium, 
  and 
      $\Omega_x(\omega) = \{O\}$. 
      
  \item [(ii)] When $\sigma^2< 2\alpha$, system \eqref{sys5} has other random equilibria except the origin $O$. 
  More precisely, 
  we have 
  
      \begin{itemize}
      \item [(ii.1)] 
      In the case  of Theorem \ref{bifurcations} $(iii.1)$, 
      $\Omega_x(\omega)$ 
       belongs to  infinitely many random equilibria 
       generated by deterministic equilibria. 
       Moreover, the following geometrical properties hold: 
      \begin{itemize}
        \item [(ii.1$_a$)] if there exists a unique $i\in\{1,2,3\}$ such that $\alpha+d_i=0$, then there are infinitely many random equilibria forming one curve for each noise realization;
        \item [(ii.$1_b$)] if there are two $i,j\in \{1,2,3\}$, $i\ne j$ such that $\alpha+d_i=\alpha+d_j=0$, then there are infinitely many random equilibria forming two curves for each noise realization;
        \item [(ii.$1_c$)] if $\alpha+d_i=0$ for all $i=1,2,3$, then there are infinitely many random equilibria forming a surface on $\mathbb{R}^3_+$ for each noise realization.
      \end{itemize}
      
       \item [(ii.2)] In the case of Theorem \ref{bifurcations} $(iii.2)$, 
       there are 5 random equilibria and infinitely many Crauel random periodic solutions. 
       Moreover, 
      $\Omega_x(\omega)$ is either one of the 5 random equilibria or a random cycle corresponding to a Crauel random periodic solution.
  \item [(ii.3)] In the case of Theorem \ref{bifurcations} $(iii.3) $, 
  $\Omega_x(\omega)$ belongs to  4 distinct random equilibria, 
  whose convex combinations contain all random equilibria. 
\end{itemize}
\end{itemize}
\end{theorem}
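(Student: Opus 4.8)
The plan is to deduce the pathwise pull-back classification from the measure-theoretic description already established in Theorem~\ref{bifurcations}, combined with a pathwise cocycle conjugacy that reduces \eqref{sys5} to a random ordinary differential equation whose radial and angular parts decouple.

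First I would introduce the conjugacy $y_i(t)=x_i(t)\,e^{-\sigma W_t}$. Applying It\^o's formula removes the stochastic integral and turns \eqref{sys5} into the random ODE
\[
\frac{dy_i}{dt}=y_i\big[\beta-e^{2\sigma W_t}Q_i(y)\big],\qquad \beta:=\alpha-\tfrac{\sigma^2}{2},\quad i=1,2,3,
\]
where $Q_i$ is the quadratic form with $x_i(\alpha-Q_i(x))$ the $i$-th drift of \eqref{sys6}. Passing to radial-angular coordinates $y=r\vartheta$ with $\vartheta\in\mathbb{S}^2$, two decoupled features emerge. Using the identity $\sum_i\vartheta_i^2 Q_i(\vartheta)\equiv\alpha$ on $\mathbb{S}^2$ (which is exactly why $\mathbb{S}^2$ is invariant for \eqref{sys6}), the radial variable obeys the scalar random logistic equation
\[
\dot r=\beta r-\alpha\,e^{2\sigma W_t}r^3,
\]
whose growth rate $\beta$ changes sign precisely at the bifurcation point $\sigma^2=2\alpha$, while the angular vector field equals the deterministic angular field of \eqref{sys6} multiplied by the strictly positive scalar $e^{2\sigma W_t}r^2$. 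Hence every angular orbit coincides with a deterministic orbit on $\mathbb{S}^2$ up to a strictly increasing time reparametrization, and the pull-back dynamics factor into a one-dimensional random logistic radial part and a time-changed copy of the deterministic angular phase portrait.

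For case (i), when $\sigma^2\ge 2\alpha$ we have $\beta\le 0$, so $\dot r\le -\alpha e^{2\sigma W_t}r^3\le 0$ and $r\to 0$ in the pull-back sense; together with the bound $\lambda_i(\delta_O)\le 0$ of Theorem~\ref{bifurcations}(i)--(ii) and uniqueness of the stationary measure, this forces $\Omega_x(\omega)=\{O\}$. In case (ii) one has $\beta>0$, the logistic radial equation admits a strictly positive pull-back random equilibrium, and the fate of a trajectory is decided by the angular $\omega$-limit set on $\mathbb{S}^2$, which I import from the deterministic phase portrait already used in Theorem~\ref{bifurcations}(iii.1)--(iii.3). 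In the degenerate subcase (iii.1) the vanishing of some $\alpha+d_i$ creates a curve (resp.\ two curves, resp.\ a surface) of non-isolated equilibria on $\mathbb{S}^2$, which lift through the conjugacy to the families of random equilibria of (ii.1$_a$)--(ii.1$_c$). In subcase (iii.3) the sphere carries only finitely many equilibria, lifting to the four random equilibria whose convex combinations give (ii.3). In subcase (iii.2) the positive octant of $\mathbb{S}^2$ carries the axis equilibria on its boundary and is foliated in its interior by the closed orbits $\Gamma(h)$, $h>h^*$; since the angular flow differs from the deterministic one only by a strictly positive time change, each $\Gamma(h)$ persists as an invariant closed curve traversed at random speed, which I identify with a Crauel random periodic solution in the sense of Definition~\ref{Crauel-def}, its cone $\Lambda(h)$ being the support of the corresponding ergodic measure of Theorem~\ref{bifurcations}(iii.2). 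Matching each ergodic stationary measure to its random equilibrium or random cycle by disintegrating the skew-product invariant measure, and using the position of $x$ together with the signs of the Lyapunov exponents to select the limiting object, then yields the stated form of $\Omega_x(\omega)$.

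The main obstacle is subcase (ii.2): upgrading the deterministic family of periodic orbits to infinitely many genuine Crauel random periodic solutions and proving that the pull-back $\Omega$-limit set is exactly one random cycle for a.e.\ $\omega$ and every interior $x$. The difficulty is twofold. First, verifying Crauel's periodicity definition requires controlling the random return time induced by the time change and constructing a measurable selection of the limiting cycle. Second, because the interior orbits are neutrally stable---the interior equilibrium is a centre carrying a conserved first integral whose critical value is $h^*$---convergence to a single $\Gamma(h)$ cannot follow from hyperbolic attraction; instead I would exploit that this first integral of the angular dynamics is preserved under the positive time change, so that the angular component of a pull-back trajectory stays on a fixed level set and thereby pins down $h$ from the initial datum. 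A parallel but milder difficulty appears in subcase (ii.1), where the continua of equilibria again force a first-integral argument to single out the limiting random equilibrium.
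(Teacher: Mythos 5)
Your reduction is, at bottom, the same as the paper's: the conjugacy $y=xe^{-\sigma W_t}$ followed by polar coordinates is an equivalent form of the stochastic decomposition formula of Theorem \ref{sc} --- your radial equation $\dot r=\beta r-\alpha e^{2\sigma W_t}r^3$ is the stochastic logistic equation \eqref{sysg} after the same exponential conjugacy, and your time-changed angular flow is the deterministic flow $\Psi$ appearing in \eqref{sdf}. The identity $\sum_i\vartheta_i^2Q_i(\vartheta)\equiv\alpha$ on $\mathbb{S}^2$ is correct, and your treatment of (ii.1) and (ii.3) then follows the paper's route (Theorem \ref{attract} plus the deterministic classification). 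One remark on (ii.3): the claim that convex combinations of the four random equilibria capture \emph{all} random equilibria is not automatic from "lifting"; the paper proves it by noting that any $\mathcal{F}_-$-measurable random equilibrium has a stationary law, which by the disintegration \eqref{four-ergodic-measure-1} in Theorem \ref{bifurcations}(iii.3) must be a convex combination of $\{\delta_O,\mu_{\textbf{e}_i},\,i=1,2,3\}$.

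The genuine gap is subcase (ii.2), which you flag as "the main obstacle" and then leave open. The theorem asserts the existence of infinitely many Crauel random periodic solutions, and saying that each $\Gamma(h)$ "persists as an invariant closed curve traversed at random speed" is not a verification of Definition \ref{Crauel-def}: one must exhibit measurable maps $\psi_h:\Omega\times\mathbb{R}_+\to\mathbb{R}^3_+$ and $T_h:\Omega\to\mathbb{R}$ satisfying both $\Phi(t,\omega,\psi_h(t_0,\omega))=\psi_h(t+t_0,\theta_t\omega)$ and $\psi_h(t,\omega)=\psi_h(t+T_h(\theta_{-t}\omega),\omega)$. The paper closes exactly this point by an explicit construction: fixing $y_0\in\Gamma(h)$ with minimal period $N(h)$, it sets $\psi_h(t,\omega):=u_g(\omega)\Psi\bigl(\int_{-t}^0u_g^2(\theta_s\omega)ds,\,y_0\bigr)$ and $T_h(\omega):=\inf\{t>0:\int_{-t}^0u_g^2(\theta_s\omega)ds=N(h)\}$. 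The cocycle identity is then a change of variables using $g(t,\omega,u_g(\omega))=u_g(\theta_t\omega)$, the random periodicity is immediate from the definition of $T_h$ together with the $N(h)$-periodicity of $\Psi(\cdot,y_0)$, and $0<T_h<\infty$ a.s.\ follows from the Birkhoff ergodic theorem since $\lim_{t\to\infty}\frac1t\int_{-t}^0u_g^2(\theta_s\omega)ds=\mathbb{E}u_g^2=\frac{1}{\alpha}(\alpha-\frac12\sigma^2)>0$. Your worry about "a measurable selection of the limiting cycle" dissolves once this formula is written down; without it, the central assertion of (ii.2) is unproven. (Your second difficulty --- pinning down $h$ --- is indeed handled by the invariance of the cones $\Lambda(h)$, i.e.\ the level sets of the degree-zero homogeneous first integral, exactly as in the paper.)

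There is also a flaw in your case (i). Pathwise monotonicity $\dot r\le0$ gives $r(t,\omega',r_0)\le r_0$ for each fixed path $\omega'$, but the pull-back trajectory $t\mapsto r(t,\theta_{-t}\omega,r_0)$ is \emph{not} a solution along a single forcing path, so monotonicity yields neither its convergence to $0$ nor, more importantly, convergence of the original variable: undoing the conjugacy multiplies by $e^{-\sigma W_{-t}(\omega)}$, which is a.s.\ unbounded in $t$ by the law of the iterated logarithm. One needs the quantitative decay encoded in the explicit logistic solution \eqref{solug} (equivalently, \eqref{g-thetat-0} of Lemma \ref{lem1}(i)), which also covers the critical case $\beta=0$, i.e.\ $\sigma^2=2\alpha$, where one must invoke $\int_{-t}^{0}e^{2\sigma W_s}ds\to\infty$ a.s.; your appeal to the signs of $\lambda_i(\delta_O)$ does not substitute for this, since non-positive Lyapunov exponents do not imply global pull-back attraction. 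Finally, the uniqueness of the random equilibrium in (i), which is part of the statement, is not addressed: the paper derives it by observing that any $\mathcal{F}_-$-measurable random equilibrium distinct from $O$ would generate a stationary measure different from $\delta_O$, contradicting Theorem \ref{bifurcations}(i)--(ii).
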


\begin{remark} 
(i)
In  \cite{ELR21}, 
Engel and Kuehn gave several two-dimensional examples (see Examples 2 and 3 in \cite{ELR21}) to show the existence 
of Crauel random periodic solutions, 
which corresponds to a unique limit cycle of 
related deterministic system multiplied by a random equilibrium. 

Inspired by \cite{ELR21}, 
Theorem \ref{main-thm-2} (ii.2) 
provides a different model, 
which have infinitely many Crauel random periodic solutions 
that correspond to infinitely many periodic orbits of 
deterministic Kolmogorov system 
multiplied by a random equilibrium. 
The existence of infinitely many Crauel random periodic solutions makes it possible to further consider Poincar\'e bifurcation in the stochastic setting 
(for poincar\'e bifurcation in the deterministic setting see \cite{dla}.)

(ii) It is known that positive Lyapunov exponents are associated with chaotic behavior, and the zero Lyapunov exponent is related to the bifurcation phenomenon (See \cite{al, BBS22.1, ELR19}).
 
The new bifurcation phenomena 
$(ii.1_a)$-$(ii.1_c)$ displaying in the subcase $(ii.1)$ 
of Theorem \ref{main-thm-2} 
indeed relates to the number of 
zero Lyapunov exponents. 
To be more precise, let us take the random equilibrium $u_g(\omega)\textbf{e}_1$ for an example. 
The number of zero Lyapunov exponents 
of $u_g(\omega)\textbf{e}_1$ is at most one 
in the case of Theorem \ref{main-thm-2} $(ii.1_a)$,  
while at most two in the case of Theorem \ref{main-thm-2} $(ii.1_b)$, 
but in the case of Theorem \ref{main-thm-2} $(ii.1_c)$  
the number of zero Lyapunov exponents is exactly two. 

This fact shows that there exist rich dynamics 
in the zero Lyapunov exponent regime.
\end{remark}

\subsubsection{Further comments}

\paragraph{$(i)$ {\it Classification of global dynamics for deterministic Kolmogorov system:}}  
The complete classification of global dynamics  
is also proved for the deterministic Kolmogorov system \eqref{sys6}. 
More precisely, 
we prove that there are 6 different topological phase portraits 
in Subsection \ref{classdeterministic} below.  
For the convenience of readers, 
the visual phase diagrams 
are  shown in Figure \ref{global dynamics}.   

Furthermore, 
the bifurcation phenomenon of global dynamics is shown 
for deterministic system \eqref{sys6}, 
which is related to the loss of the hyperbolicity of some
 orbits such as equilibrium and periodic orbits. 
See Figures \ref{line}-\ref{positive}.

It is worth noting that, 
compared to stochastic Kolmogorov system \eqref{sys5}, 
the deterministic Kolmogorov system \eqref{sys6} has more delicate dynamics such as heteroclinic orbits 
(see Figure \ref{global dynamics} (iii.a) below).

\paragraph{$(ii)$ {\it Vanishing noise limit:}}
The relationship, 
via vanishing noise limit, 
between ergodic measures 
for the stochastic and deterministic Kolmogorov systems is studied as well. 

As the noise intensity tends to zero, 
we prove that 
the ergodic stationary measures of stochastic Kolmogorov system \eqref{sys5} converges to
the ergodic invariant measures of the deterministic system \eqref{sys6},
which are Dirac measures supported on equilibria 
or Haar measures supported on 
periodic orbits. 

In order to characterize the support of the invariant measures, 
we use the Poincar\'e recurrence theorem. 
The detailed proof is contained in Subsection \ref{Subsec-Vanish} below.

\medskip 
{\bf Organization:}   
In Section \ref{Sec-Deter-Kol},   
we give the complete classification of global dynamics and show the global bifurcation diagrams of the deterministic Kolmogorov system \eqref{sys6}.  
Then, Section \ref{Sec-Stoch-form} contains a stochastic decomposition formula, 
which connects solutions to deterministic and stochastic Kolmogorov systems. 
Several useful long-term dynamical behaviors of logistic-type equations are shown there as well. 
Sections \ref{pull-back limit set}-\ref{Sec-Stocha-Kol} are mainly devoted to the stochastic Kolmogorov system \eqref{sys5}. 
We first characterize the pull-back $\Omega$-limit sets 
in Section \ref{pull-back limit set}.  
In Section \ref{Sec-Ergo-Meas}, we obtain two types of ergodic stationary measures related to equilibria and invariant cones, and establish the relationship, 
via the vanishing noise limit, between stationary measures for the deterministic and stochastic Kolmogorov systems.  
Section \ref{Sec-Stocha-Kol} contains the proof of the main results, i.e., Theorems \ref{bifurcations} and \ref{main-thm-2}. Finally, the Appendix contains some preliminaries of random dynamical systems and probability used in this paper.

\medskip
\paragraph{\bf A guide to notations}
For the convenience of readers,
we list the notations
that are used in this paper.

\paragraph{\bf Deterministic Kolmogorov system:}
\begin{itemize} 
 \item  $\textrm{Int}\mathbb{R}^3_+:=\{(x_1, x_2, x_3)\in \mathbb{R}^3_+: x_i>0, \,\forall \,i=1,2,3\}$ denotes the interior of $\mathbb{R}^3_+$,
  $\partial\mathbb{R}^3_+:=\{(x_1, x_2, x_3)\in \mathbb{R}^3_+: x_i=0, \,\, \exists \,\, i\in \{1, 2, 3\}\}$ is the boundary of $\mathbb{R}^3_+$,
  $\mathbb{S}^2:=\{x\in \mathbb{R}^3: x_1^2+x_2^2+x_3^2=1\}$ is the unit sphere in $\mathbb{R}^3$,
  and $\mathbb{S}^2_+: =\mathbb{S}^2 \cap \mathbb{R}_+^3$, $\partial\mathbb{S}^2_+:=\mathbb{S}^2_+\cap \partial\mathbb{R}^3_+$ denotes the boundary of $\mathbb{S}^2_+$, $\textrm{Int}\mathbb{S}^2_+:=\mathbb{S}^2_+ \cap \textrm{Int}\mathbb{R}^3_+$ denotes the interior of $\mathbb{S}^2_+$.

 \item Let $\Psi=\Psi(t, x)$ denote the solution to the
 deterministic Kolmogorov system \eqref{sys6} at time $t$ with the initial value $x\in \mathbb{R}_+^3$.
 
  \item $\mathcal{E}$ denotes the set of all equilibria of system \eqref{sys6},
  that is, if $Q\in \mathcal{E}$, then $\Psi(t, Q)=Q, \,\,\forall \,\,t \geq 0$.
 
\item $\mathcal{L}(y):=\{\lambda y: \lambda>0\}$
denotes the ray passing through the point $y$, $y\in \mathbb{R}^3_+$, and $\overline{\mathcal{L}(y)}$ is the closure of $\mathcal{L}(y)$ in $\mathbb{R}^3_+$.

\item $\Gamma(h)$ is the closed orbit for each $h\in ({h}^*, \infty)$, where
    \begin{equation*}
{h}^*:= \prod\limits_{i=1}^3 \left(\frac{\alpha+d_{4-i}}{3\alpha+d_1+d_2+d_3}\right)^{-\frac{\alpha+d_{4-i}}{3\alpha+d_1+d_2+d_3}}.
\end{equation*}

\item $\Lambda(h):=\{\lambda y: y\in \Gamma(h), \lambda\ge 0\}$ is the cone for $h\in ({h}^*, \infty)$.

\item $\omega_d(x)$ is the $\omega$-limit set of the deterministic trajectory $\Psi$ to \eqref{sys6}, defined by, for $x\in \mathbb{R}^3_+$,
\begin{equation}\label{omega}
 \omega_d(x)=\{y:  \exists\ \textrm{an sequence} \ t_k \ \textrm{such that} \lim_{t_k \uparrow+\infty}\Psi(t_k, x)=y \}.
\end{equation}
Correspondingly, the attracting domain of $\omega_d(x)$ is defined by
$$\mathcal{A}(\omega_d(x)):=\{y \in \mathbb{R}_+^3: \displaystyle \lim_{t\rightarrow +\infty} \textrm{dist}(\Psi(t,y),\omega_d(x))=0\}.$$
In particular, for $Q\in \mathcal{E}$,
$$\mathcal{A}(Q):=\{y \in \mathbb{R}_+^3: \displaystyle \lim_{t\rightarrow +\infty} \textrm{dist}(\Psi(t,y),Q)=0\},$$
and for $h\in ({h}^*, \infty)$,
$$\mathcal{A}(\Gamma(h)):=\{y \in \mathbb{R}_+^3: \displaystyle \lim_{t\rightarrow +\infty} \textrm{dist}(\Psi(t,y),\Gamma(h))=0\}.$$

\item $\alpha_d(x)$ is the $\alpha$-limit set of the deterministic trajectory $\Psi$ to \eqref{sys6}, defined by,
  for $x\in \mathbb{R}^3_+$,
\begin{equation}\label{alpha-omega}
 \alpha_d(x)=\{y:  \exists\ \textrm{a sequence} \ t_k \ \textrm{such that} \lim_{t_k\downarrow-\infty}\Psi(t_k, x)=y \}.
\end{equation}
\end{itemize}

\paragraph{\bf Stochastic Kolmogorov system:}
\begin{itemize}
  \item  Let $\mathcal{B}(\mathbb{R}^3)$ be the Borel  $\sigma$-algebra
on $\mathbb{R}^3$, and $\mathcal{B}_b(\mathbb{R}^3)$ (resp. $\mathcal{C}_b(\mathbb{R}^3))$
be the set of all real bounded Borel (resp. continuous)
measurable functions on $\mathbb{R}^3$.

  \item Let $\Phi=\Phi(t,\omega, x)$ be the solution to the stochastic Kolmogorov system \eqref{sys5}
  at time $t$ with the initial value $x\in \mathbb{R}_+^3$, $\omega \in \Omega$.
      Let $a=(a^{ij})$ and $b$ be the corresponding diffusion matrix and drift term of  \eqref{sys5},
      respectively.

  \item
      Let $\mathcal{P}(\mathbb{R}^3)$ and $\mathcal{P}(\mathbb{R}^3_+)$
   denote, respectively,
   the set of all probability measures on $\mathbb{R}^3$ and $\mathbb{R}_+^3$,
   $\mathcal{P}_e(\mathbb{R}^3_+)$
   is the set of all ergodic stationary measures of $\Phi$.

\item  $(P_t)$ is the Markov semigroup corresponding to the stochastic Kolmogorov system \eqref{sys5}.

  \item The pull-back $\Omega$-limit set of the trajectory $\{\Phi(t, \theta_{-t}\omega, x)\}_{t\geq0}$ is defined by
\begin{equation*}\label{pullback}
     \Omega_x(\omega):=\bigcap_{t>0}\overline{\bigcup_{\tau\geq t}\Phi(\tau, \theta_{-\tau}\omega,x)}.
\end{equation*}

  \item $\mu_Q$ is the stationary measure related to the equilibrium $Q \in \mathcal{E}$.

  \item $\nu_h$ is the stationary measure related to
 the cone $\Lambda(h)$.

  \item $\mathscr{L}^{\sigma}$ is the Fokker-Planck operator  defined by
$$ \mathscr{L}^{\sigma}f(x):= \langle \nabla f(x), b(x) \rangle + \frac12 a^{ij}\partial^2_{ij}f(x),\ \ \forall f \in C^2. $$

\end{itemize}

\section{Deterministic Kolmogorov system}   \label{Sec-Deter-Kol}

This section is devoted to the topological classification and bifurcations of global dynamics of
the deterministic Kolmogorov system \eqref{sys6}.

Note that system \eqref{sys6} has three invariant planes $x_i=0$, $i=1,2,3$, and an invariant sphere
\begin{equation}\label{shpere}
 \mathbb{S}^2=\{(x_1,x_2,x_3): \ x_1^2+x_2^2+x_3^2=1\}\subset \mathbb{R}^3.
\end{equation}
We first prove that system \eqref{sys6} is dissipative in $\mathbb{R}^3$ and the invariant sphere $\mathbb{S}^2$
is a global attractor  in $\mathbb{R}^3\setminus \{O\}$ for $\alpha >0$
and all $(d_1,d_2,d_3)\in \mathbb{R}^3$.
Due to the axisymmetry of system \eqref{sys6},
it suffices to study the topological classification
of the global dynamics of system \eqref{sys6}
in the first octant $\mathbb{R}_+^3$, here
\begin{equation*}
 \mathbb{R}^3_+=\{(x_1,x_2,x_3)\in\mathbb{R}^3: \ x_1\ge 0, x_2\ge 0, x_3\ge 0\}.
\end{equation*}
 Since $\mathbb{S}^2$
is a global attractor of system \eqref{sys6}, we only study the topological classification
of the global dynamics of system \eqref{sys6} on
the  invariant sphere $\mathbb{S}^2_+$, where
\begin{equation*}
 \mathbb{S}^2_+=\{(x_1,x_2,x_3)\in\mathbb{R}_+^3: \ x_1^2+x_2^2+x_3^2=1\}\subset \mathbb{R}_+^3.
\end{equation*}

We say that two global dynamics of system \eqref{sys6} on the invariant sphere $\mathbb{S}^2_+$ are {\it topologically equivalent} if there exists a homeomorphism from one onto the other
which sends orbits on  $\mathbb{S}^2_+$ of system \eqref{sys6} to orbits preserving or reversing the direction of the flow. Our main aim is to prove that the global dynamics of system \eqref{sys6}
on the invariant sphere $\mathbb{S}^2_+$ have and only have 6 different topological classifications, whose phase portraits are shown in
Figure \ref{global dynamics} (i) - (v). Moreover, choosing $\alpha+d_1$, $\alpha+d_2$
and $\alpha+d_3$ as bifurcation parameters of system \eqref{sys6}, denoted by $(m_1,m_2,m_3)$ for simplicity,  we consider bifurcation of system \eqref{sys6} in
the parameter space $(m_1,m_2,m_3)\in \mathbb{R}^3$ at bifurcation point $(0,0,0)$, and obtain the global bifurcation diagram and the corresponding topological phase portraits of system \eqref{sys6}, see Figures \ref{line}-\ref{positive}.

\subsection{Classification of global dynamics}\label{classdeterministic}

We first prove that system \eqref{sys6} is dissipative in $\mathbb{R}^3$ and the invariant sphere $\mathbb{S}^2$
is a global attractor  in $\mathbb{R}^3\setminus \{O\}$ for $\alpha >0$
and all $(d_1,d_2,d_3)\in \mathbb{R}^3$.

\begin{lemma}(Global attractor)\label{dglobal}
System \eqref{sys6} is dissipative in $\mathbb{R}^3$,
and the invariant sphere $\mathbb{S}^2$ given by \eqref{shpere}
is a global attractor in $\mathbb{R}^3\setminus \{O\}$.
That is,
$\omega_d (x_0) \subset \mathbb{S}^2$ for any $x_0\in \mathbb{R}^3 \setminus \{O\}$.
\end{lemma}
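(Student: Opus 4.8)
The plan is to reduce the three-dimensional flow to a scalar logistic equation for the squared radius, from which both dissipativity and the global attraction of $\mathbb{S}^2$ follow immediately. First I would introduce $r^2 := x_1^2 + x_2^2 + x_3^2$ and compute its derivative along solutions of \eqref{sys6}. Since $\frac{d}{dt} r^2 = 2(x_1\dot{x}_1 + x_2\dot{x}_2 + x_3\dot{x}_3)$, substituting the right-hand sides of \eqref{sys6} produces a linear part and a quartic part. The linear (per-capita $\alpha$) terms contribute exactly $2\alpha\, r^2$, while the quartic terms $x_i^2 x_j^2$ must be collected with care.

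The key computation, and essentially the only step requiring attention, is that the antisymmetric couplings $d_i$ cancel in the mixed products. Indeed, the coefficient of $x_1^2 x_2^2$ is $-(2\alpha+d_1)+d_1 = -2\alpha$, and the same cancellation occurs for $x_1^2 x_3^2$ (coefficient $d_2-(2\alpha+d_2)=-2\alpha$) and for $x_2^2 x_3^2$ (coefficient $-(2\alpha+d_3)+d_3=-2\alpha$). Together with the diagonal contributions $-\alpha x_i^4$, the quartic part collapses to $-\alpha(x_1^4+x_2^4+x_3^4) - 2\alpha(x_1^2x_2^2 + x_1^2x_3^2 + x_2^2x_3^2) = -\alpha\,(x_1^2+x_2^2+x_3^2)^2 = -\alpha\, r^4$. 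Hence, writing $\rho := r^2$, the radial dynamics obey the scalar logistic equation $\dot{\rho} = 2\alpha\,\rho(1-\rho)$.

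The remaining conclusions are then standard one-dimensional phase-line arguments. For $\alpha>0$ the equilibrium $\rho=0$ is a repeller and $\rho=1$ is globally asymptotically stable on $(0,\infty)$, since $\dot\rho>0$ for $0<\rho<1$ and $\dot\rho<0$ for $\rho>1$. Consequently, for every $x_0 \in \mathbb{R}^3 \setminus \{O\}$ one has $r^2(t)\to 1$, equivalently $\mathrm{dist}(\Psi(t,x_0),\mathbb{S}^2)\to 0$, which yields $\omega_d(x_0)\subset \mathbb{S}^2$ and identifies $\mathbb{S}^2$ as the global attractor in $\mathbb{R}^3\setminus\{O\}$. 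Dissipativity follows from the same inequality: since $\dot\rho<0$ whenever $\rho>1$, any closed ball $\{r^2 \le 1+\varepsilon\}$ with $\varepsilon>0$ is a bounded absorbing set into which every trajectory eventually enters and remains.

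I do not anticipate a genuine obstacle here; the only nontrivial point is verifying the algebraic collapse of the quartic terms, and once the $d_i$ cancel pairwise in the cross products, the reduction to the logistic equation is exact and the attractor and absorbing-set statements are routine.
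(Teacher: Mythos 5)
Your proof is correct and follows essentially the same route as the paper: the paper computes $\frac{d}{dt}L(\Psi(t,x_0))$ with $L(x)=\|x\|^2-1$ and obtains the identity $\frac{dL}{dt}=-2\alpha\|\Psi\|^2L$, which is exactly your closed logistic equation $\dot\rho=2\alpha\,\rho(1-\rho)$ for $\rho=\|x\|^2$, resting on the same pairwise cancellation of the $d_i$ in the cross terms. The only cosmetic difference is the concluding step: the paper first notes that $O$ is a local repeller to get a lower bound $\inf_{t\ge0}\|\Psi(t,x_0)\|\ge c(x_0)>0$ and then deduces exponential decay of $L$, whereas you invoke the standard phase-line analysis of the scalar logistic equation; both arguments are valid.
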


\begin{proof}
Since the origin $O$ is an equilibrium of system \eqref{sys6}
and all three eigenvalues of the Jacobian matrix at $O$ are positive,
$O$ is a local repeller of system \eqref{sys6}.

Hence,
for any $x_0\in \mathbb{R}^3\setminus\{O\}$
there exists a constant $c(x_0)>0$ such that the solution $\Psi(t, x_0)$ of system \eqref{sys6} passing through $x(0)=x_0$ satisfies
\begin{equation}\label{estimate0}
   \inf\limits_{t\geq 0} \|\Psi(t, x_0)\|\geq c(x_0)
   >0.
\end{equation}

Let
$$L(x):=x_1^2+x_2^2+x_3^2-1,\,\, x=(x_1, x_2, x_3)\in \mathbb{R}^3.$$
Then, by straightforward computations,
for any $x_0\in \mathbb{R}^3$,
\begin{equation}\label{Leq}
\frac{dL(\Psi(t,x_0))}{dt}|_{\eqref{sys6}}
 =-2\alpha\|\Psi(t, x_0)\|^2L(\Psi(t, x_0))
 \left\{\begin{array}{lll}
<0, \ & {\textrm{if}}\ L(x_0)>0;\\
=0, \ & {\textrm{if}}\ L(x_0)=0;\\
>0, \ & {\textrm{if}}\ L(x_0)<0.
\end{array}
\right.
\end{equation}
This yields that system \eqref{sys6} is dissipative in $\mathbb{R}^3$.

Further, from equation \eqref{Leq} and \eqref{estimate0},  we have
$$\|L(\Psi(t, x_0))\|\leq \|L(x_0)\|\exp\{\int_0^t -2\alpha {c^2(x_0)}ds\},\ \forall t\ge 0, x_0\in \mathbb{R}^3\setminus \{O\}.$$
Thus,
$$\displaystyle \lim_{t\rightarrow +\infty}\|L(\Psi(t, x_0))\|=0.$$
This yields that $\omega_d(x_0)\subseteq \mathbb{S}^2$ for any $x_0\in \mathbb{R}^3\setminus \{O\}$, hence, the invariant sphere $\mathbb{S}^2$
is a global attractor of system \eqref{sys6} in $\mathbb{R}^3\setminus \{O\}$.
\end{proof}


Note that the existence of first integrals plays important role in the study of dynamics of differential systems. To study global dynamics of system \eqref{sys6} in $\mathbb{R}_+^3$, we try to find the first integrals of system \eqref{sys6}.
 Since system \eqref{sys6} has four invariant algebraic surfaces: three coordinate planes and $\mathbb{S}^2$, by virtue of the Darboux theory of integrability  in \cite{Darboux}
we construct the  first integrals of system \eqref{sys6} in the interior of $\mathbb{R}_+^3$ denoted by $\textrm{Int}\mathbb{R}_+^3$  as follows, where
$$\textrm{Int}\mathbb{R}_+^3=\{(x_1,x_2,x_3)\in\mathbb{R}^3: \ x_1>0, x_2> 0, x_3> 0\}.$$
\begin{lemma}\label{first integral}(Existence of first integrals).
\begin{itemize}
  \item[(i)] If $3\alpha+d_1+d_2+d_3\neq 0$, then system \eqref{sys6} has
   a first integral $H_1(x_1, x_2, x_3)$ in  $\textrm{Int}\mathbb{R}_+^3$,
      $$
      H_1(x_1, x_2, x_3)= \prod\limits_{i=1}^3 x_i^{-\frac{2(\alpha+d_{4-i})}{3\alpha+d_1+d_2+d_3}} \|x\|^2, $$
 where $\|x\|=\sqrt{\sum_{i=1}^3x_i^2}.$
  \item [(ii)] If $3\alpha+d_1+d_2+d_3=0$ and $\sum_{i=1}^{3}(\alpha+d_i)^2\neq 0$,
  then system \eqref{sys6} has  a first integral $H_2(x_1, x_2, x_3)$ in $\textrm{Int}\mathbb{R}_+^3$, where
      $$H_2(x_1, x_2, x_3)= \prod\limits_{i=1}^3 x_i^{\alpha+d_{4-i}}.$$
\end{itemize}
\end{lemma}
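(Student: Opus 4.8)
The plan is to construct both first integrals via the Darboux theory of integrability, exploiting the four invariant algebraic surfaces already identified for system \eqref{sys6}: the three coordinate planes $\{x_i=0\}$ and the sphere $\mathbb{S}^2$. First I would recall that each of these surfaces is the zero set of a \emph{Darboux polynomial}, i.e.\ a polynomial $f$ admitting a polynomial cofactor $K$ with $\dot f|_{\eqref{sys6}} = Kf$. For $f=x_i$ the cofactor is simply the per capita growth rate $P_i$, since $\dot x_i = x_i P_i$. For $f=\|x\|^2=x_1^2+x_2^2+x_3^2$, the computation already carried out in the proof of Lemma~\ref{dglobal} (equation \eqref{Leq}) gives $\dot{(\|x\|^2)} = 2\alpha\|x\|^2(1-\|x\|^2)$, so $\|x\|^2$ is a Darboux polynomial with cofactor $K_4 = 2\alpha(1-\|x\|^2)$.

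Next I would invoke the Darboux criterion: a product $x_1^{\lambda_1}x_2^{\lambda_2}x_3^{\lambda_3}(\|x\|^2)^{\lambda_4}$ is a first integral in $\mathrm{Int}\,\mathbb{R}^3_+$ if and only if the cofactors combine to zero, $\sum_{i=1}^4 \lambda_i K_i \equiv 0$. The key structural observation is that all four cofactors lie in the four-dimensional space spanned by $\{1, x_1^2, x_2^2, x_3^2\}$; collecting the coefficients of these monomials turns the identity $\sum_i \lambda_i K_i \equiv 0$ into a homogeneous linear system of four equations in the four unknowns $\lambda_1,\dots,\lambda_4$. Writing out the coefficient matrix explicitly, one checks that the off-diagonal interaction coefficients conspire (their symmetric part equals $-2\alpha$), which forces the matrix to be singular; hence a nontrivial solution always exists.

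For part (i), I would normalize $\lambda_4 = 1$ and solve the system. The constant-monomial equation reads $\lambda_1+\lambda_2+\lambda_3+2\lambda_4 = 0$, and solving the three remaining equations gives $\lambda_i = -2(\alpha+d_{4-i})/(3\alpha+d_1+d_2+d_3)$, which is well defined precisely when $S:=3\alpha+d_1+d_2+d_3 \neq 0$. A direct substitution confirms these exponents annihilate each monomial coefficient (each cancellation reduces to an identity of the form $\tfrac{2}{S}\alpha S - 2\alpha = 0$), producing exactly $H_1$. For part (ii), this solution degenerates when $S=0$ because the exponents blow up, so instead I would set $\lambda_4 = 0$ and look for a monomial product $x_1^{\mu_1}x_2^{\mu_2}x_3^{\mu_3}$. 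Now the constant equation becomes $\mu_1+\mu_2+\mu_3 = 0$, which is satisfied by the choice $\mu_i = \alpha+d_{4-i}$ exactly because $\sum_i(\alpha+d_i)=S=0$; the three remaining equations each collapse to $-\alpha S = 0$ and thus hold, yielding $H_2$. The hypothesis $\sum_{i=1}^3(\alpha+d_i)^2 \neq 0$ guarantees that not all exponents vanish, so $H_2$ is genuinely non-constant.

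The main obstacle is the verification bookkeeping in the middle step: one must correctly compute the cofactor of the sphere and confirm that the special cubic-Kolmogorov coefficient pattern makes the $4\times 4$ cofactor matrix singular, so that the candidate exponents simultaneously cancel the constant term and all three quadratic terms. Once this linear-algebraic cancellation is in place, both first integrals follow, and it is worth noting that $H_2$ is simply the degenerate $S\to 0$ limit of the exponent vector defining $H_1$, which explains why the two cases dovetail.
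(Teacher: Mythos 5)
Your proposal is correct and is essentially the paper's own approach: the paper constructs $H_1,H_2$ via the Darboux theory of integrability from the four invariant algebraic surfaces and then proves the lemma by checking $\langle b(x),\nabla H_i(x)\rangle\equiv 0$, which is exactly your cofactor identity $\sum_i\lambda_iK_i\equiv 0$ written as a logarithmic derivative along the flow. Your version merely makes the bookkeeping explicit (deriving the exponents from the linear system, with the key cancellation $\tfrac{2}{S}\alpha S-2\alpha=0$ and the degenerate choice $\mu_i=\alpha+d_{4-i}$ when $S=0$), where the paper compresses this into ``straightforward computations.''
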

\begin{proof}
Since $H_1(x_1, x_2, x_3)$ and $H_2(x_1, x_2, x_3)$ are continuously differentiable functions in $\textrm{Int}\mathbb{R}_+^3$,  from straightforward computations we have $\forall x\in \textrm{Int}\mathbb{R}_+^3$,
\begin{equation*}
\begin{array}{ll}
\langle b(x), \nabla H_1(x) \rangle\equiv 0, \ & {\textrm{if}}\ \sum_{i=1}^{3}(\alpha+d_i)\neq 0;\\
\langle b(x), \nabla H_2(x) \rangle\equiv 0, \ & {\textrm{if}}\ \sum_{i=1}^{3}(\alpha+d_i)=0, \sum_{i=1}^{3}(\alpha+d_i)^2\neq 0,
\end{array}
\end{equation*}
where $b(x)$ is vector field of system \eqref{sys6} (or the drift of $\Psi$)  in $\textrm{Int}\mathbb{R}_+^3$, and $\langle \cdot,\cdot \rangle$ is an inner product. Hence, $H_1(x_1, x_2, x_3)$ is a first integral
of system \eqref{sys6} in  $\textrm{Int}\mathbb{R}_+^3$ if $3\alpha+d_1+d_2+d_3\neq 0$, and $H_2(x_1, x_2, x_3)$ is a first integral
of system \eqref{sys6} in  $\textrm{Int}\mathbb{R}_+^3$ if $3\alpha+d_1+d_2+d_3=0$ and $\sum_{i=1}^{3}(\alpha+d_i)^2\neq 0$.
\end{proof}
Note that the level set of the first integral
\begin{equation}\label{cone0}
\Lambda_i(h):=\{(x_1, x_2, x_3): H_i(x_1, x_2, x_3)=h\in I_i \}, \ i=1,2
\end{equation}
is invariant under the flow  $\Psi$ of system \eqref{sys6} in $\textrm{Int}\mathbb{R}_+^3$ by definition of the first integral,
where $I_i\subset \mathbb{R}$ is the image interval of $H_i(x)$ in $\textrm{Int}\mathbb{R}_+^3$.
And $\textrm{Int}\mathbb{R}_+^3$ is foliated by $\Lambda_i(h)$ for any a $h\in I_i$. Hence,  system \eqref{sys6} in $\textrm{Int}\mathbb{R}_+^3$ can be reduced to a differential system on  $\Lambda_i(h)$.

For the sake of the statement, we recall some terminology.
An equilibrium point of system \eqref{sys6} in $\mathbb{R}_+^3$ is called {\it boundary equilibrium}
if at least one of its coordinates is zero,
otherwise it is called {\it positive equilibrium}, that is, three coordinates of the equilibrium point are positive. An equilibrium point is called {\it isolated  equilibrium}
if there is a neighborhood of the equilibrium point in $\mathbb{R}^3$ such that
there is no other equilibrium point in this neighborhood, otherwise the equilibrium point is said to be {\it non-isolated}. The topological classification of
 an equilibrium point can be characterized by its local stable, unstable and center manifolds, see the invariant manifold theorem in \cite{dla}. And these local manifolds
 of an equilibrium point are closely related to the sign of the real parts of eigenvalues of the Jacobi matrix of system \eqref{sys6}
at the equilibrium point.
An equilibrium has {\it $k$-dimensional local stable (resp. unstable, center) manifold}
if there are exactly $k$ eigenvalues $\lambda_i$  with  ${\rm Re}(\lambda_i)<0 \ (resp. >0, \ resp. =0)$, where $1\le k\le 3$.
 An equilibrium point is called {\it hyperbolic equilibrium} if the real parts of all eigenvalues are not zero, otherwise it is called {\it non-hyperbolic equilibrium}.
Further, if there is at least one zero eigenvalue of the equilibrium, then the non-hyperbolic equilibrium is said to be {\it degenerated}.

We are now in the position to study the local dynamics of system \eqref{sys6} in $\mathbb{R}^3_+$ including the existence and topological classification of equilibrium points. It is clear that
system \eqref{sys6} always has four boundary equilibrium points $O=(0,0,0)$,
$\textbf{e}_1=(1,0,0)$, $\textbf{e}_2=(0,1,0)$ and $\textbf{e}_3=(0,0,1)$ in $\mathbb{R}^3_+$
for any  $\alpha>0$ and $(d_1,d_2,d_3)\in \mathbb{R}^3$. Using straightforward computations,
we obtain  all equilibria of system \eqref{sys6} in $\mathbb{R}^3_+$ as follows.

\begin{proposition}(Existence of equilibria) \label{pro1}
System \eqref{sys6}  has only isolated equilibria in $\mathbb{R}^3_+$ if and only if $\Pi_{i=1}^3(\alpha+d_i)\not=0$.
More precisely,
\begin{itemize}
  \item [($i$)] if  $\alpha+d_i>0$ ($\alpha+d_i<0$, resp.) for all $i\in\{1,2,3\}$, then system \eqref{sys6}  has only five isolated equilibria
  $O, \textbf{e}_1, \textbf{e}_2, \textbf{e}_3, Q^*$ in $\mathbb{R}^3_+$,
  where $Q^*=(q^*_1, q^*_2, q^*_3)$ is positive equilibrium, here
  $$q^*_i=\sqrt{\dfrac{\alpha+d_{4-i}}{3\alpha+d_1+d_2+d_3}},\ \ i=1,2,3;$$
  \item [($ii$)] if $\prod_{i=1}^3(\alpha+d_i) \neq 0$ and there exist $i\neq j$, $i,j\in \{1,2,3\}$,
  such that $(\alpha+d_i)(\alpha+d_j)<0$, then  system \eqref{sys6} has only four isolated boundary equilibria
  $O, \textbf{e}_1, \textbf{e}_2, \textbf{e}_3$ in $\mathbb{R}^3_+$.

\end{itemize}

System \eqref{sys6} has both isolated equilibria and non-isolated equilibria in $\mathbb{R}^3_+$ if and only if $\Pi_{i=1}^3(\alpha+d_i)=0$.
More precisely,
\begin{itemize}
  \item [($iii$)] if there is only one $i_0\in \{1, 2, 3\}$ such that $\alpha+d_{4-i_0}=0$ and $\alpha+d_j\not=0$
  for all $ j\in\{1,2,3\}\setminus\{i_0\}$, then system \eqref{sys6} has only two isolated equilibria $O, \textbf{e}_{i_0}$,
  $i_0\in \{1,2,3\}$,
  and infinitely many non-isolated equilibria
  which fills the curve  section
  \begin{equation}\label{curves}
  \Gamma_{ij}:=\{x\in \mathbb{R}_+^3: x^2_i+x^2_j =1, x_{i_0}=0\} \subset \mathbb{R}_+^3,
  \end{equation}
  where $i<j$ and $i, j\in \{1,2,3\}\setminus\{i_0\}$.
  \item[($vi$)] if there exist $i,j\in \{1,2,3\}$ such that $\alpha+d_i=0$,
  $\alpha+d_j=0$  and $\alpha+d_k\not=0$, where $k\in \{1,2,3\}\setminus\{i,j\}$, then system \eqref{sys6} has  a unique isolated equilibrium  $O$
  and infinitely many non-isolated equilibria which fill two curve sections of
  $\{\Gamma_{12}, \Gamma_{13}, \Gamma_{23}\}$.
 \item[($v$)] if $\alpha+d_i=0$ for all $i=1, 2, 3,$  then system \eqref{sys6} has  a unique isolated equilibrium  $O$
  and infinitely many non-isolated equilibria which fill the invariant sphere
  $\mathbb{S}^2_+$.
\end{itemize}

All equilibria of system \eqref{sys6} except $O$ in $\mathbb{R}_+^3$ are located on $\mathbb{S}^2_+$.
\end{proposition}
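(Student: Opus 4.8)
The plan is to classify equilibria by their \emph{support} (how many coordinates vanish) and to reduce the whole computation to the invariant sphere, where the cubic system collapses into elementary bilinear relations. Throughout write $m_i:=\alpha+d_i$ (the bifurcation parameters of the paper) and, for $x=(x_1,x_2,x_3)$, set $u_i:=x_i^2$. First I would record the two structural facts that drive everything. Substituting $O$ and the axial points shows that $O,\mathbf{e}_1,\mathbf{e}_2,\mathbf{e}_3$ are equilibria for every choice of $\alpha>0$ and $(d_1,d_2,d_3)$. Next, any equilibrium $Q\neq O$ is a rest point, so $\omega_d(Q)=\{Q\}$, and Lemma \ref{dglobal} forces $\omega_d(Q)\subset\mathbb{S}^2$; hence $Q\in\mathbb{S}^2_+$. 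This already proves the final assertion of the proposition, and, crucially, it lets me impose the normalization $u_1+u_2+u_3=1$ on \emph{every} nonzero equilibrium.

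The heart of the argument is that on $\{u_1+u_2+u_3=1\}$ the three per-capita rates simplify completely. Writing $2\alpha+d_1=\alpha+m_1$, $d_2=m_2-\alpha$, etc., and replacing $\alpha$ by $\alpha(u_1+u_2+u_3)$ in the constant terms of $P_i$, all the $\alpha$-dependence and the cubic nonlinearity cancel, leaving
\begin{equation*}
P_1=-m_1u_2+m_2u_3,\qquad P_2=m_1u_1-m_3u_3,\qquad P_3=-m_2u_1+m_3u_2 .
\end{equation*}
Thus, on the sphere, an equilibrium is governed purely by the sign pattern and ratios of $m_1,m_2,m_3$. I would then run the support analysis off these three identities. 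For the axial case (two coordinates zero) the sphere constraint forces the point to be exactly $\mathbf{e}_i$, which always exists. For the planar case, say $x_3=0$ with $u_1,u_2>0$ and $u_1+u_2=1$, the surviving equations are $P_1=-m_1u_2$ and $P_2=m_1u_1$, so both vanish if and only if $m_1=0$, in which case the entire arc $\Gamma_{12}=\{x_1^2+x_2^2=1,\,x_3=0\}$ consists of equilibria and there is no planar equilibrium when $m_1\neq0$; the planes $x_2=0$ and $x_1=0$ are governed identically by $m_2$ and $m_3$.

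For the interior case ($u_i>0$, $\sum u_i=1$) the relations read $m_1u_2=m_2u_3$, $m_1u_1=m_3u_3$, $m_2u_1=m_3u_2$. Since all $u_i>0$, each identity forces the two parameters it couples to have the same sign, so an interior equilibrium can exist only when $m_1,m_2,m_3$ are all positive, all negative, or all zero. When they share a common nonzero sign, solving the ratios yields the single point $u_i=m_{4-i}/(m_1+m_2+m_3)$, i.e. $Q^*$; when all $m_i=0$ every point of $\mathrm{Int}\,\mathbb{S}^2_+$ is an equilibrium; and if at least one but not all $m_i$ vanish, one of the three relations forces some $u_i=0$, contradicting interiority. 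It then remains to assemble the counts according to the sign pattern of $(m_1,m_2,m_3)$: all $m_i$ of one sign and nonzero gives $O,\mathbf{e}_1,\mathbf{e}_2,\mathbf{e}_3,Q^*$ (case (i)); $\prod m_i\neq0$ with mixed signs gives only $O,\mathbf{e}_1,\mathbf{e}_2,\mathbf{e}_3$ (case (ii)); exactly one $m_{4-i_0}=0$ produces one arc (with the corresponding $\mathbf{e}_i,\mathbf{e}_j$ absorbed as its endpoints) leaving $O,\mathbf{e}_{i_0}$ isolated (case (iii)); two vanishing $m_i$ produce two arcs with only $O$ isolated (case (iv)); and all $m_i=0$ fills $\mathbb{S}^2_+$ (case (v)). The ``only isolated iff $\prod(\alpha+d_i)\neq0$'' dichotomy is exactly the statement that an arc appears precisely when some $m_k=0$.

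The step I expect to be the genuine obstacle is the interior analysis, and the whole point is that the sphere normalization is what makes it tractable. Without it one is forced to treat $P_1=P_2=P_3=0$ directly as a $3\times3$ linear system $Au=-\alpha\mathbf 1$ in $u=(x_1^2,x_2^2,x_3^2)$; a computation gives $\det A=-\alpha(3\alpha+d_1+d_2+d_3)^2$, and the degenerate locus $3\alpha+d_1+d_2+d_3=0$ (which does occur inside case (ii)) leaves a whole line of candidate solutions that one must laboriously show avoids the open positive octant. Imposing $u_1+u_2+u_3=1$ from Lemma \ref{dglobal} at the outset bypasses this degeneracy entirely, turning the existence question into the transparent sign trichotomy for $\alpha+d_i$ above.
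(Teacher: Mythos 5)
Your proposal is correct, and the computations check out: on $\{u_1+u_2+u_3=1\}$ the per-capita rates do collapse to $P_1=-m_1u_2+m_2u_3$, $P_2=m_1u_1-m_3u_3$, $P_3=-m_2u_1+m_3u_2$; the planar analysis correctly ties the arcs $\Gamma_{12},\Gamma_{13},\Gamma_{23}$ to the vanishing of $m_1,m_2,m_3$ respectively, which agrees with the paper's indexing ($\alpha+d_{4-i_0}=0$ giving the arc in $\{x_{i_0}=0\}$ and leaving $O,\mathbf{e}_{i_0}$ isolated); and the interior trichotomy produces exactly $Q^*$ with $u_i=m_{4-i}/(m_1+m_2+m_3)$, matching $q_i^*$, or the full sphere when all $m_i=0$.

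The comparison is lopsided, because the paper gives no proof at all: Proposition \ref{pro1} is prefaced only by the remark that the equilibria are obtained ``using straightforward computations'', so its implicit route is a direct case-by-case solution of the algebraic system $x_iP_i=0$, with the on-sphere location of the nonzero equilibria recorded as a closing byproduct. You invert that order: you first deduce the sphere statement from Lemma \ref{dglobal} via $\omega_d(Q)=\{Q\}$ (a correct and clean use of the global attractor), and then exploit the normalization $u_1+u_2+u_3=1$ to turn the cubic equilibrium equations into bilinear sign relations, so the entire classification becomes a transparent trichotomy in the signs of $(m_1,m_2,m_3)$. What your route buys is exactly what your last paragraph identifies: the unnormalized approach must solve $Au=-\alpha\mathbf{1}$ in $u=(x_1^2,x_2^2,x_3^2)$, whose determinant $-\alpha(3\alpha+d_1+d_2+d_3)^2$ vanishes on a locus that genuinely intersects the hypotheses of case (ii), and excluding spurious positive solutions there takes additional work that the paper never records; your normalization makes that degeneracy invisible. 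The only points you leave implicit---that $O$ remains isolated in the degenerate cases because all other equilibria lie on the unit sphere, and that $\mathbf{e}_{i_0}$ in case (iii) is isolated because it lies off the arc---are immediate from your setup, so the argument is complete as written.
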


To discuss the topological classification of an equilibrium, we calculate three eigenvalues of each  isolated equilibrium and non-isolated equilibria of system \eqref{sys6} in $\mathbb{R}_+^3$. The following table gives
the possible isolated equilibria and the corresponding three eigenvalues.
\begin{table}[H]
		\centering
		\caption{Possible isolated equilibria and the corresponding three eigenvalues }\label{SecType_eq1_atleast7}
		\setlength{\tabcolsep}{10pt}
		\begin{tabular}{ll}
			\toprule
			Equilibrium & three eigenvalues\\
			\midrule
			$O=(0,0,0)$ & $\alpha,\,\alpha,\,\alpha$\\
			$\textbf{e}_1=(1,0,0)$ & $-2\alpha,\,\alpha+d_1,\,-(\alpha+d_2)$\\
			$\textbf{e}_2=(0,1,0)$ & $\,-(\alpha+d_1),-2\alpha,\,\alpha+d_3$\\
			$\textbf{e}_3=(0,0,1)$ & $\,\alpha+d_2,\,-(\alpha+d_3),-2\alpha$\\
            $Q^*=(q_1^*, q_2^*, q_3^*)$ & $\lambda_{Q^*}i,  -\lambda_{Q^*}i, -2\alpha$, here $\lambda_{Q^*}=2\sqrt{\frac{(\alpha+d_1)(\alpha+d_2)(\alpha+d_3)}{3\alpha+d_1+d_2+d_3}}$\\

			\bottomrule
		\end{tabular}
	\end{table}

Even though there are three (two) cases for system \eqref{sys6} having non-isolated equilibria (only isolated equilibria, resp.) in Proposition \ref{pro1}, there exist many different sets of parameter conditions of system \eqref{sys6} in these cases (i) - (vi), i.e.
the case (i) ((ii), (iii), (vi)) has  two (six, twelve, six, resp.) different sets of parameter conditions.
Note that the two (six, twelve, six) different sets of parameter conditions in case (i) ((ii), (iii), (vi), resp.) can be exchanged to one (one, two, one) different sets of parameter conditions in case (i) ((ii), (iii), (vi), resp.) under either permutation of the order among coordinates $(x_1, x_2, x_3)$ or change time $t$ to $-t$ if we consider dynamics of system \eqref{sys6} on $\mathbb{S}^2_+$. Hence, in the sense of topologically equivalent,  we only need to consider topological classification of equilibria of system \eqref{sys6} in the following six different sets of parameter conditions:
  \begin{itemize}
  \item [(i)] $\alpha+d_1>0, \alpha+d_2>0$, and $\alpha+d_3>0$;
  \item [(ii)]  $\alpha+d_1<0, \alpha+d_2>0$, and $\alpha+d_3<0$;
  \item [(iii.a)]  $\alpha+d_1=0, \alpha+d_2>0$, and $ \alpha+d_3>0$;
  \item [(iii.b)]  $\alpha+d_1>0, \alpha+d_2=0$, and $ \alpha+d_3<0$;
  \item [(vi)]  $\alpha+d_1=0, \alpha+d_2=0$, and $\alpha+d_3<0$;
  \item [(v)]  $\alpha+d_1=0, \alpha+d_2=0$, and $\alpha+d_3=0$.
  \end{itemize}

 We denote the ray passing through the point $P\in \mathbb{R}^3_+$ by $\mathcal{L}(P):=\{\lambda P: \lambda>0\}$. And $\Gamma_{ij}$ defined by \eqref{curves} is the curve section. Lemmas \ref{isolated} and  \ref{contiunnm} below shows the local dynamics of every equilibria of system \eqref{sys6} in $\mathbb{R}^3_+$ under the above six different sets of parameter conditions.
\begin{lemma}\label{isolated}
If system \eqref{sys6} has isolated equilibria, then these isolated equilibria are all hyperbolic expect the positive equilibrium $Q^*$. Moreover, the equilibrium $O$ always is a local repeller with three-dimensional unstable manifold in $\mathbb{R}^3_+$, and the local dynamics of others are as follows.
\begin{itemize}
  \item [(i)] If $\alpha+d_i>0$, $i=1,2,3$, then boundary equilibrium $\textbf{e}_1$ has two-dimensional stable manifold on plane $\{x\in \mathbb{R}_+^3:\ x_2=0\}$ and one-dimensional unstable manifold on curve section $\Gamma_{12}$; $\textbf{e}_2$ has two-dimensional stable manifold on plane $\{x\in \mathbb{R}_+^3:\ x_3=0\}$ and one-dimensional unstable manifold on $\Gamma_{23}$; $\textbf{e}_3$ has two-dimensional stable manifold on plane $\{x\in \mathbb{R}_+^3:\ x_1=0\}$ and one-dimensional unstable manifold on  $\Gamma_{13}$; and positive equilibrium $Q^*$ is a center on its two-dimensional center manifold in $\mathbb{S}^2_+$  and $Q^*$ has a one-dimensional stable manifold $\mathcal{L}(Q^*)$ in $\mathbb{R}^3_+$.
 \item [(ii)] If $\alpha+d_1<0, \alpha+d_2>0$ and $\alpha+d_3<0$, then boundary equilibrium $\textbf{e}_1$ has three-dimensional stable manifold on $\mathbb{R}^3_+$; $\textbf{e}_2$ has two-dimensional stable manifold on plane $\{x\in \mathbb{R}_+^3:\ x_1=0\}$ and one-dimensional unstable manifold on  $\Gamma_{12}$;  $\textbf{e}_3$ has one-dimensional stable manifold on the positive $x_3$-axis and two-dimensional unstable manifold on $\mathbb{S}^2_+$.
\end{itemize}
\end{lemma}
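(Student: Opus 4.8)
The plan is to linearize system \eqref{sys6} at each isolated equilibrium and read off the signs of the eigenvalues, which are already recorded in Table \ref{SecType_eq1_atleast7}; the remaining work is to match each eigenvalue with its eigendirection and then to locate the invariant manifolds inside the known invariant sets, namely the coordinate planes $\{x_i=0\}$, the sphere $\mathbb{S}^2_+$, and the curves $\Gamma_{ij}$ from \eqref{curves}. The first point to observe is that the Jacobian of \eqref{sys6} at every boundary equilibrium $\textbf{e}_k$ is diagonal in the standard basis: since each component has the form $\dot x_i = x_i P_i(x)$ and $\textbf{e}_k$ has a single nonzero coordinate, every off-diagonal entry $\partial(\dot x_i)/\partial x_j$ with $i\neq j$ carries a factor of a vanishing coordinate. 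Hence the eigendirections at $\textbf{e}_k$ are exactly the coordinate axes, with the radial axis $\mathcal{L}(\textbf{e}_k)$ carrying the eigenvalue $-2\alpha$ and the two tangential axes carrying $\pm(\alpha+d_j)$, consistent with Table \ref{SecType_eq1_atleast7}.

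Hyperbolicity then follows immediately: when isolated equilibria exist, Proposition \ref{pro1} gives $\prod_{i=1}^3(\alpha+d_i)\neq 0$, so every eigenvalue of $O,\textbf{e}_1,\textbf{e}_2,\textbf{e}_3$ is a nonzero real number (using $\alpha>0$), making these four equilibria hyperbolic; for $O$ all three eigenvalues equal $\alpha>0$, so $O$ carries a three-dimensional unstable manifold and is a local repeller. In contrast, the positive equilibrium $Q^*$ (which occurs only in case (i)) has the purely imaginary pair $\pm\lambda_{Q^*}i$ together with $-2\alpha$, so it is the unique non-hyperbolic isolated equilibrium, as claimed.

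For the manifold descriptions in cases (i) and (ii) I would simply substitute the sign conditions into the diagonalized spectrum. In case (i) ($\alpha+d_i>0$) the eigenvalues at $\textbf{e}_1$ are $-2\alpha,\ \alpha+d_1,\ -(\alpha+d_2)$ along $x_1,x_2,x_3$, giving two stable directions spanning the invariant plane $\{x_2=0\}$ and one unstable direction along $x_2$; since $\Gamma_{12}=\{x\in\mathbb{R}^3_+: x_1^2+x_2^2=1,\ x_3=0\}$ is an invariant curve through $\textbf{e}_1$ tangent to that unstable direction, the one-dimensional unstable manifold coincides with $\Gamma_{12}$. The analogous substitutions for $\textbf{e}_2,\textbf{e}_3$ in case (i), and for all three $\textbf{e}_k$ in case (ii) (where, e.g., all three eigenvalues at $\textbf{e}_1$ become negative, forcing a three-dimensional stable manifold), are identical in spirit and I would present them together via the spectrum in Table \ref{SecType_eq1_atleast7}.

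The one genuinely non-routine step is to upgrade $Q^*$ from a linear center to a true center on its two-dimensional center manifold, since the purely imaginary pair $\pm\lambda_{Q^*}i$ is by itself inconclusive. Here I would invoke the first integral $H_1$ from Lemma \ref{first integral}: in case (i) one has $3\alpha+d_1+d_2+d_3=\sum_{i=1}^3(\alpha+d_i)>0$, so $H_1$ is a well-defined first integral on $\textrm{Int}\mathbb{R}^3_+$, and its restriction to the invariant sphere $\mathbb{S}^2_+$ has $Q^*$ as its unique interior critical point with $H_1\to+\infty$ toward $\partial\mathbb{S}^2_+$ (because the exponents $-2(\alpha+d_{4-i})/(3\alpha+d_1+d_2+d_3)$ are all negative), so $Q^*$ is a strict local minimum of $H_1|_{\mathbb{S}^2_+}$. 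Consequently the level sets of $H_1$ near $Q^*$ on $\mathbb{S}^2_+$ are closed curves encircling $Q^*$, which identifies $\mathbb{S}^2_+$ as the center manifold and shows $Q^*$ is a genuine center there, while the eigenvalue $-2\alpha$ supplies the one-dimensional stable manifold $\mathcal{L}(Q^*)$ transverse to the sphere. The main obstacle is precisely this center determination; everything else reduces to reading signs off the diagonal spectrum, and the first integral is exactly the tool that resolves the degenerate purely imaginary case.
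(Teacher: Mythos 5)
Your overall strategy coincides with the paper's: read the eigenvalues from Table \ref{SecType_eq1_atleast7}, match each eigenvalue with an eigendirection, and identify the local invariant manifolds with the known invariant sets (coordinate planes, the arcs $\Gamma_{ij}$, the sphere $\mathbb{S}^2_+$, and rays). Your observation that the Jacobian at each boundary equilibrium is automatically diagonal (every off-diagonal entry carries a factor $x_ix_j$) is a clean justification of the eigendirections that the paper merely states. Your treatment of the center is a genuinely more self-contained variant of the paper's: the paper restricts the flow to $\mathbb{S}^2_+$, obtains the planar system \eqref{reduce-2-dimension} with first integral $\tilde H_1$, and invokes the Poincar\'e center theorem, whereas you work with $H_1|_{\mathbb{S}^2_+}$ directly, noting that it blows up at $\partial\mathbb{S}^2_+$ and has $Q^*$ as its unique interior critical point, hence a strict minimum whose nearby level sets are regular closed invariant curves containing no equilibria, so every nearby orbit is periodic. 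This is correct (regularity of the nearby level sets follows precisely from uniqueness of the critical point) and buys independence from the center theorem, at the cost of actually carrying out the Lagrange-multiplier computation showing that $Q^*$ is the unique interior critical point of $H_1|_{\mathbb{S}^2_+}$ --- you assert this, but it is routine and true.

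The one genuine omission is your final clause, that ``the eigenvalue $-2\alpha$ supplies the one-dimensional stable manifold $\mathcal{L}(Q^*)$.'' Unlike at the boundary equilibria, the Jacobian at $Q^*$ is \emph{not} diagonal, so nothing you have established identifies the radial direction as the $-2\alpha$-eigendirection, nor shows that the ray $\mathcal{L}(Q^*)$ is invariant; the stable manifold theorem by itself only produces \emph{some} curve tangent to the $-2\alpha$-eigenvector. The paper closes exactly this point by an explicit computation: using the equilibrium identities \eqref{equi} and the homogeneity of the cubic terms, $b(sQ^*)=\alpha s(1-s^2)Q^*$, which shows at once that $\mathcal{L}(Q^*)$ is invariant, that orbits on it converge to $Q^*$ (since $\omega_d(x)\subseteq\mathbb{S}^2_+$ by Lemma \ref{dglobal} and $\mathcal{L}(Q^*)\cap\mathbb{S}^2_+=\{Q^*\}$), and, after differentiating in $s$ at $s=1$, that the vector $Q^*$ is itself the $-2\alpha$-eigenvector; uniqueness of the stable manifold then identifies $\mathcal{L}(Q^*)$ as claimed. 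This is a one-line fix, but without it the identification of the stable manifold of $Q^*$ --- and likewise the assertion in case (ii) that stable manifolds fill out all of $\mathbb{R}^3_+$ rather than just the tangent picture --- is unsupported.
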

\begin{proof}
All eigenvalues of the Jacobi matrix at each isolated equilibrium have been shown in Table \ref{SecType_eq1_atleast7}. Then by Proposition \ref{pro1}, it is not hard to check that each isolated equilibrium is hyperbolic except the positive equilibrium. Clearly, the three eigenvalues of the boundary equilibrium $O$ are $\alpha>0$. Thus, $O$ is a local repeller with a three-dimensional unstable manifold in $\mathbb{R}^3_+$. In the following, we consider the local dynamics of the other isolated equilibria in case ($i.a$) and case ($ii.a$).

{\bf Case (i)}:  if $\alpha+d_i>0, i=1,2,3$, then the three eigenvalues of Jacobi matrix at boundary equilibrium  $\textbf{e}_1$ are $-2\alpha<0$, $\alpha+d_1>0$ and $-(\alpha+d_2)<0$, whose
  associated eigenvectors are $(1,0,0)$, $(0,1,0)$ and $(0,0,1)$, respectively. It can be checked that the positive $x_1$-axis, $\Gamma_{13}$ and $\Gamma_{12}$ is an invariant manifold of system \eqref{sys6}, which tangents to eigenvector $(1,0,0)$, $(0,0,1)$ and $(0,1,0)$, respectively. Hence, the two-dimensional stable manifold of $\textbf{e}_1$ is on the plane $\{x\in \mathbb{R}^3_+: x_2=0\}$ and the one-dimensional unstable manifold of $\textbf{e}_1$ is on $\Gamma_{12}$. Using the similar arguments, the local dynamics of boundary equilibria  $\textbf{e}_2$ and $\textbf{e}_3$ can be obtained.

It remains to verify the local dynamics of positive equilibrium $Q^*$.  Since the three eigenvalues of Jacobi matrix at $Q^*$ are $\pm \lambda_{Q^*}i$ and $-2\alpha$, $Q^*$ has a two-dimensional center manifold which is tangent at $Q^*$ to a plane spanned by the associated eigenvectors of  $\pm \lambda_{Q^*}i$  and a one-dimensional stable manifold which is tangent at $Q^*$ to a line spanned by the associated eigenvector of $-2\alpha$. Note that  $\mathbb{S}^2_+$ is a unique two-dimensional attractor passing through $Q^*$ by Lemma \ref{dglobal}. So the two-dimensional center manifold of $Q^*$ is on $\mathbb{S}^2_+$. Further, by Lemma \ref{first integral} we know that system \eqref{sys6} has a first integral $H_1(x)$, where
$x\in \textrm{Int}{\mathbb{R}^3_+}$. Therefore, the following reduced system of system \eqref{sys6} on  $\mathbb{S}^2_+$
\begin{equation}\label{reduce-2-dimension}
\begin{cases}
\frac{dx_1}{dt}=x_1(\alpha+d_2-(\alpha+d_2) x_1^2-(2\alpha+d_1+d_2)x_2^2),\\
\frac{dx_2}{dt}=x_2(-(\alpha+d_3)+(2\alpha+d_1+d_3)x_1^2+(\alpha+d_3)x_2^2)\\
\end{cases}
\end{equation}
has a first integral ${\tilde{H}}_1(x_1,x_2)$ in $\textrm{Int}{\mathbb{S}^2_+}$, where
$${\tilde{H}}_1(x_1,x_2)=x_1^{-\frac{2(\alpha+d_3)}{3\alpha+d_1+d_2+d_3}}x_2^{-\frac{2(\alpha+d_2)}{3\alpha+d_1+d_2+d_3}}(1-x_1^2-x_2^2)^{-\frac{(\alpha+d_1)}{3\alpha+d_1+d_2+d_3}}.$$
This leads that the positive equilibrium $Q^*$ is a center on  $\mathbb{S}^2_+$ by Poincar\'e center theorem.

Now we turn to prove that the ray $\mathcal{L}(Q^*)$ is exactly the one-dimensional stable manifold of $Q^*=(q_1^*, q_2^*, q_3^*)$. Since $Q^*=(q_1^*, q_2^*, q_3^*)$ is a positive equilibrium of system \eqref{sys6},
 we have
\begin{equation}\label{equi}
\begin{cases}
\alpha-\alpha (q_1^*)^2-(2\alpha+d_1) (q_2^*)^2+d_2 (q_3^*)^2=0,\\
\alpha+d_1 (q_1^*)^2-\alpha (q_2^*)^2-(2\alpha+d_3)(q_3^*)^2=0,\\
\alpha-(2\alpha+d_2)(q_1^*)^2+d_3 (q_2^*)^2-\alpha (q_3^*)^2=0.
\end{cases}
\end{equation}
For any $x\in \mathcal{L}(Q^*)\setminus\{Q^*\}$, there exists an  $1\not=s>0$ such that
$x= (s q_1^*, s q_2^*, s q_3^*)$. Then
the  vector field of system \eqref{sys6} at $x$ is
\begin{align*}
 b(x)&=\left(
             \begin{array}{c}
               sq_1^*(\alpha-s^2\alpha (q_1^*)^2-s^2(2\alpha+d_1)(q_2^*)^2+s^2d_2(q_3^*)^2) \\
               sq_2^*(\alpha+s^2d_1(q_1^*)^2-\alpha s^2(q_2^*)^2-(2\alpha+d_3)s^2(q_3^*)^2) \\
               sq_3^*(\alpha-(2\alpha+d_2)s^2(q_1^*)^2+d_3s^2(q_2^*)^2-\alpha s^2 (q_3^*)^2)\\
             \end{array}
           \right)\\
           &=\alpha s(1-s^2)\left(
              \begin{array}{c}
                q_1^* \\
                q_2^* \\
                q_3^* \\
              \end{array}
            \right)
 \end{align*}
 by \eqref{equi}.
Thus, $b(x)$ is parallel to the ray $\mathcal{L}(Q^*)$,
which implies that $\mathcal{L}(Q^*)$ is invariant under \eqref{sys6}.

Moreover, it follows from Lemma \ref{dglobal} that
$\omega_d(x)\subseteq \mathbb{S}^2_+$
for any $x\in \mathcal{L}(Q^*)$. Note that
$\omega_d(x)\subseteq  \mathbb{S}^2_+ \bigcap \mathcal{L}(Q^*) = \{Q^*\}$,
which yields that $\omega_d(x)=\{Q^*\}$. Thus, by the uniqueness of the stable manifold, $\mathcal{L}(Q^*)$ is the one-dimensional stable manifold of $Q^*$.

{\bf Case (ii)}: if $\alpha+d_1<0, \alpha+d_2>0$ and $\alpha+d_3<0$, then the local dynamics of each boundary equilibrium $\textbf{e}_i$ ($i=1,2,3$) can be characterized by the similar method in case ($i.a$). To save the space, we hence omit the proof.
\end{proof}

\begin{lemma}\label{contiunnm} If system \eqref{sys6} has non-isolated equilibria, then these non-isolated equilibria are non-hyperbolic. More precisely,
\begin{itemize}
  \item [(iii.a)] if $\alpha+d_1=0, \alpha+d_2>0$ and $\alpha+d_3>0$, then every points  on $\Gamma_{12}$
  are  non-isolated equilibria, and there exists a unique  non-isolated equilibrium $\bar{Q}:=(\bar{q}_1,\bar{q}_2, 0) \in \Gamma_{12}$ with $\bar{q}_1>0$, which divides  $\Gamma_{12}$ into two parts $\Gamma_{12}^-$ with $x_1<\bar{q}_1$
  and $\Gamma_{12}^+$ with $x_1>\bar{q}_1$ such that $\bar{Q}$ has one-dimensional stable manifold $\mathcal{L}(\bar{Q})$ and two-dimensional center manifold on $\mathbb{S}^2_+$; for any $Q_-\in \Gamma_{12}^-$, $Q_-$
  has one-dimensional stable manifold $\mathcal{L}({Q_-})$, one-dimensional center manifold on $\Gamma_{12}^-$ and one-dimensional unstable manifold on $\mathbb{S}^2_+$; for any $Q_+\in \Gamma_{12}^+$, $Q_+$
  has two-dimensional stable manifold spanned by $\mathcal{L}({Q_+})$ and a curve on $\mathbb{S}^2_+$, one-dimensional center manifold on $\Gamma_{12}$;
   \item [(iii.b)] if  $\alpha+d_1>0, \alpha+d_2=0$ and $ \alpha+d_3<0$, then every points on $\Gamma_{13}$ are non-isolated equilibria. And for any $Q\in \Gamma_{13}$, it has one-dimensional unstable manifold on $\mathbb{S}^2_+$, one-dimensional center manifold on $\Gamma_{13}$ and one-dimensional stable manifold $\mathcal{L}(Q)$.

  \item [(vi)] if  $\alpha+d_1=0, \alpha+d_2=0$ and $\alpha+d_3<0$, then every points on either  $\Gamma_{12}$ or $\Gamma_{13}$ are non-isolated equilibria. For any $Q\in \Gamma_{12}$, it has one-dimensional center manifold $\Gamma_{12}$ and two-dimensional stable manifold spanned by $\mathcal{L}(Q)$ and a curve on $\mathbb{S}^2_+$. And for any $Q\in \Gamma_{23}$, it has one-dimensional center manifold $\Gamma_{23}$, one-dimensional stable manifold $\mathcal{L}(Q)$ and one-dimensional unstable manifold on $\mathbb{S}^2_+$.

  \item [(v)] if  $\alpha+d_1=0, \alpha+d_2=0$ and $\alpha+d_3=0$, then every points in $\mathbb{S}^2_+$ are non-isolated equilibria. For any $Q\in \mathbb{S}^2_+$, $Q$ has one-dimensional stable manifold $\mathcal{L}(Q)$ and two-dimensional center manifold on $\mathbb{S}^2_+$.
  \end{itemize}
\end{lemma}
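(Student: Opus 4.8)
The plan is to reduce the whole statement to a single Jacobian computation at a generic non-isolated equilibrium, exploiting the radial--spherical structure of \eqref{sys6}. By Proposition \ref{pro1}, in cases (iii.a), (iii.b) and (vi) the non-isolated equilibria fill one or two of the curve sections $\Gamma_{ij}\subset\mathbb{S}^2_+$, while in case (v) they fill the whole sphere $\mathbb{S}^2_+$; in every case the continuum lies on the invariant sphere $\mathbb{S}^2$. Two structural observations carry most of the argument. First, the radial direction is always contracting at rate $-2\alpha$: since $P_i(sx)=\alpha+s^2(P_i(x)-\alpha)$, the ray $\mathcal{L}(Q)$ through any equilibrium $Q\in\mathbb{S}^2_+$ is invariant, and the identity $\langle x,b(x)\rangle=-\alpha\|x\|^2(\|x\|^2-1)$ (the computation already underlying Lemma \ref{dglobal}) shows the induced radial flow $\dot r=\alpha r(1-r^2)$ has linearization $-2\alpha$ at $r=1$. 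Hence $Q$ always has $-2\alpha$ as an eigenvalue with eigenvector $Q$, producing the one-dimensional stable manifold $\mathcal{L}(Q)$. Second, because the equilibria form a continuum, differentiating $b\equiv 0$ along it shows that the tangent space to the continuum lies in $\ker Db(Q)$; this furnishes the zero (center) eigenvalue(s), tangent to $\Gamma_{ij}$ (respectively to $\mathbb{S}^2_+$ in case (v)).

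It then remains to determine the single eigenvalue transverse to the continuum inside $\mathbb{S}^2_+$, which I would obtain by evaluating $Db(Q)$ at a generic point of each curve. Since one coordinate vanishes identically on $\Gamma_{ij}$, the Jacobian decouples into a rank-one $2\times2$ block $-2\alpha\,v v^\top$ with $v=(x_i,x_j)^\top$ in the plane of the curve --- whose eigenvalues are precisely $-2\alpha$ and $0$ --- together with a scalar entry in the missing coordinate direction, and the latter is exactly the transverse eigenvalue $\lambda_\perp$. Using $x_i^2+x_j^2=1$ one finds: in (iii.a), on $\Gamma_{12}$, $\lambda_\perp=\alpha+d_3-(2\alpha+d_2+d_3)x_1^2$, which vanishes at the unique $\bar q_1^2=(\alpha+d_3)/(2\alpha+d_2+d_3)\in(0,1)$ and is positive on $\Gamma_{12}^-$, negative on $\Gamma_{12}^+$; in (iii.b), on $\Gamma_{13}$, $\lambda_\perp=-(\alpha+d_3)+(2\alpha+d_1+d_3)x_1^2$, which is linear in $x_1^2$ and positive at both endpoints $x_1^2\in\{0,1\}$, hence positive throughout; in (vi), $\lambda_\perp=(\alpha+d_3)x_2^2<0$ on $\Gamma_{12}$ and $\lambda_\perp=-(\alpha+d_3)x_3^2>0$ on $\Gamma_{13}$; and in (v), $Db(Q)=-2\alpha\,QQ^\top$, so the only nonzero eigenvalue is $-2\alpha$ and the full tangent plane of $\mathbb{S}^2_+$ is the center eigenspace. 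The sign of $\lambda_\perp$ then dictates the transverse stable/unstable direction in each sub-case.

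Finally I would assemble the three invariant manifolds through the invariant manifold theorem (as in \cite{dla}): the eigenvalue $-2\alpha$ gives the stable ray $\mathcal{L}(Q)$; the center eigenvalue(s) give a center manifold tangent to the continuum; and a negative (resp. positive) $\lambda_\perp$ adds a stable (resp. unstable) direction tangent to $\mathbb{S}^2_+$, which integrates to a curve on $\mathbb{S}^2_+$ and, combined with $\mathcal{L}(Q)$, yields the claimed two-dimensional stable manifold whenever $\lambda_\perp<0$. The main obstacle is the precise identification of the center manifold, since the center manifold theorem only guarantees a local invariant manifold tangent to the center eigenspace. Here I would use invariance of the geometric objects to pin it down: along $\Gamma_{ij}$ the one-dimensional center manifold must be $\Gamma_{ij}$ itself (a curve of equilibria tangent to the zero eigenvector), while at the degenerate point $\bar Q$ in (iii.a) and at every point in (v) there are two zero eigenvalues spanning the tangent plane of $\mathbb{S}^2_+$, so the two-dimensional center manifold coincides locally with the invariant attracting sphere $\mathbb{S}^2_+$ by Lemma \ref{dglobal}. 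Verifying that at $\bar Q$ the two center directions (tangent to $\Gamma_{12}$ and the transverse $x_3$-direction) indeed span $T_{\bar Q}\mathbb{S}^2_+$, and hence that the center manifold there jumps from dimension one to dimension two, is the most delicate point of the proof.
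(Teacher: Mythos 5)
Your proposal is correct and follows essentially the same route as the paper: compute the three eigenvalues at each non-isolated equilibrium (the radial eigenvalue $-2\alpha$, the zero eigenvalue along the continuum of equilibria, and the transverse eigenvalue $\lambda_\perp$ whose sign governs the remaining direction), then invoke the invariant manifold theorem together with the invariance of the rays $\mathcal{L}(Q)$ and of $\mathbb{S}^2_+$ to identify the stable, center and unstable manifolds. The paper carries this out explicitly only for case (iii.a) — with exactly your formula $\lambda_\perp=\alpha+d_3-(2\alpha+d_2+d_3)x_1^2$ and the same $\bar{q}_1$ — and declares the other cases analogous, so your structural derivation of the $-2\alpha$ and $0$ eigenvalues and your explicit, correct transverse-eigenvalue computations for (iii.b), (vi) and (v) simply make that claimed similarity precise rather than constituting a different method.
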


\begin{proof}
Based on the analysis of three eigenvalues and the corresponding invariant manifold of a non-isolated equilibrium,  we can obtain the conclusions in Lemma \ref{contiunnm}. Due to similar arguments, we only prove one case of four cases, for example,
case (iii.a) as follows.

If $\alpha+d_1=0, \alpha+d_2>0$ and $\alpha+d_3>0$, then the three eigenvalues of Jacobi matrix at the non-isolated equilibrium $Q(x_1, x_2, 0)\in \Gamma_{12}$ are $\lambda_1=0, \lambda_2=-2\alpha, \lambda_3=-(2\alpha+d_2+d_3)x_1^2+\alpha+d_3$. 

Note that $0<\frac{\alpha+d_3}{2\alpha+d_2+d_3}<1$. Let $\bar{q}_1:= \sqrt{\frac{\alpha+d_3}{2\alpha+d_2+d_3}}$. Then $0<\bar{q}_1<1$. Thus, $\bar{Q}=(\bar{q}_1, \bar{q}_2, 0)$ is the unique non-isolated equilibrium in $ \Gamma_{12}$ such that
the corresponding eigenvalues of $\bar{Q}$ are $0, -2\alpha, 0$, where $\bar{q}_2:=\sqrt{1-\bar{q}^2_1}$.  This yields that $\bar{Q}$ has a two-dimensional center manifold and a one-dimensional stable manifold. By the same method in the proof of case (i.a) in Lemma \ref{isolated}, we obtain that $\mathcal{L}(\bar{Q})$ is invariant and for any $x\in \mathcal{L}(\bar{Q})$, $\omega_d(x)=\{\bar{Q}\}$. Then, by the uniqueness of the stable manifold, $\mathcal{L}(\bar{Q})$ is the one-dimensional stable manifold of  $\bar{Q}$.
Since $\bar{Q}\in \mathbb{S}^2_+$ and $\mathbb{S}^2_+$ is a global attractor of system \eqref{sys6} in $\mathbb{R}_+^3$, the two-dimensional center manifold of $\bar{Q}$ is on $\mathbb{S}^2_+$ by the invariant manifold theory.

We now consider the non-isolated equilibrium in $\Gamma_{12}\setminus\{\bar{Q}\}$.

If $Q_-\in \Gamma_{12}^-$, then the eigenvalues of $Q_-$ are $0$, $\lambda_{1Q_-}<0$ and $\lambda_{2Q_-}>0$. Hence, the non-isolated equilibrium $Q_-$
 has one-dimensional stable manifold $\mathcal{L}(Q_-)$, one-dimensional unstable manifold on $\mathbb{S}^2_+$ and one-dimensional center manifold on $\Gamma_{12}$.

 If $Q_+\in \Gamma_{12}^+$, then the eigenvalues of $Q_+$ are $0$, $\lambda_{1Q_+}<0$ and $\lambda_{2Q_+}<0$. It can be checked that $Q_+$ has a one-dimensional center manifold on
$\Gamma_{12}$ and a two-dimensional stable manifold spanned by the ray $\mathcal{L}(Q_+)$ and a curve on  $\mathbb{S}^2_+$.
\end{proof}

 Let
\begin{equation}\label{h2}
\begin{split}
{h}^*:&=H_1(Q^*)= \prod\limits_{i=1}^3 \left(\frac{\alpha+d_{4-i}}{3\alpha+d_1+d_2+d_3}\right)^{-\frac{\alpha+d_{4-i}}{3\alpha+d_1+d_2+d_3}},\\
\end{split}
\end{equation}
where $H_1(x)$ is the first integral of system \eqref{sys6} in Lemma \ref{first integral}, $Q^*$ is the positive equilibrium, and $\bar{Q}$ is a non-isolated boundary equilibrium, whose first two coordinates are $\bar{q}_1$ and $\bar{q}_2$ in Lemma \ref{contiunnm}. We are now ready to classify the global dynamics of system \eqref{sys6}.

\begin{theorem}(Classification of global dynamics)\label{gd1}
Global dynamics of system \eqref{sys6} has and only has the following 6 different topological phase portraits in $\mathbb{R}^3_+$.
\begin{itemize}
\item [(i)]When $\alpha+d_i>0$, $i=1,2,3$, the global attractor $\mathbb{S}_+^2$  consists of periodic orbits $\Gamma(h)=\mathbb{S}_+^2\cap \Lambda_1(h)$ for any $h\in (h^*, \infty)$, positive equilibrium $Q^*$ and the heteroclinic polycycle $\partial \mathbb{S}_+^2$. The phase portrait is shown on the right of Figure \ref{global dynamics}. (i).

    Further, we can characterize the omega set $\omega_d(x)$ of any $x\in \mathbb{R}_+^3$ as follows.
    $\omega_d(x)=\Gamma(h)$ if $x\in \Lambda_1(h)$ for any $h\in (h^*, \infty)$;  $\omega_d(x)=\{Q^*\}$ if $x\in \mathcal{L}(Q^*)$;
    $\omega_d(x)\in \{\textbf{e}_1, \textbf{e}_2, \textbf{e}_3\}$ if $x\in \partial \mathbb{R}^3_+\setminus\{O\}$.
   The corresponding phase portrait is shown on the left of Figure \ref{global dynamics} (i).

\item [(ii)]
  If $\alpha+d_1<0, \alpha+d_2>0, \alpha+d_3<0$,
  then $\textbf{e}_1$ ($\textbf{e}_3$) is a stable (unstable, resp.) node on $\mathbb{S}^2_+$,  $\textbf{e}_2$ is a saddle on $\mathbb{S}^2_+$ and the orbits from $\textbf{e}_3$ except $\Gamma_{23}$ go to $\textbf{e}_1$. The phase portrait on $\mathbb{S}^2_+$ is shown on the right of Figure \ref{global dynamics} (ii).

Further,
  $\omega_d(x)=\{ \textbf{e}_1\}$ if $x\in \textrm{Int}\mathbb{R}^3_+$; $\omega_d(x)\in \{\textbf{e}_1, \textbf{e}_2,\textbf{e}_3 \}$ for any $x\in \partial \mathbb{R}^3_+\setminus \{O\}$. The corresponding phase portrait is shown on the left of Figure \ref{global dynamics} (ii).

 \item [(iii.a)]
    If $\alpha+d_1=0, \alpha+d_2>0, \alpha+d_3>0$, then $\mathbb{S}^2_+$ consists of infinitely many heteroclinic orbits on $\textrm{Int}\mathbb{S}^2_+$, infinitely many equilibria filled with $\Gamma_{12}$ and boundary heteroclinic orbits on $\partial \mathbb{S}_+^2$. The phase portrait is shown on the right of Figure \ref{global dynamics} (iii.a).

  Further, $\omega_d(x)$ is one of equilibria on $\Gamma_{12}^+$ if $x\in \textrm{Int}\mathbb{R}^3_+$; $\omega_d(x)\in \{\textbf{e}_3, Q\in \Gamma_{12}\}$ if $x\in \partial \mathbb{R}^3_+\setminus\{O\}$.
 The corresponding phase portrait is shown on the left of Figure \ref{global dynamics} (iii.a).

\item [(iii.b)]
If $\alpha+d_1>0, \alpha+d_2=0, \alpha+d_3<0$,
      then $\textbf{e}_2$ is a stable node on $\mathbb{S}^2_+$ which attracts all orbits except $\Gamma_{13}$. The phase portrait on $\mathbb{S}^2_+$ is shown on the right of Figure \ref{global dynamics} (iii.b)

Moreover, $\omega_d(x)= \{\textbf{e}_2\}$ if $x\in \textrm{Int}\mathbb{R}^3_+$; $\omega_d(x)\in \{\textbf{e}_2, Q\in \Gamma_{13}\}$ if $x\in \partial \mathbb{R}^3_+\setminus \{O\}$.
The corresponding phase portrait is shown on the left of Figure \ref{global dynamics} (iii.b).
  \item [(iv)]
   If $\alpha+d_1=0, \alpha+d_2=0, \alpha+d_3<0$,
  then $\mathbb{S}^2_+$ consists of heteroclinic orbits on $\textrm{Int}\mathbb{S}^2_+$, infinitely many equilibria filled with $\Gamma_{12}$ and $\Gamma_{13}$, and a boundary heteroclinic orbit with endpoints $\textbf{e}_3$ and $\textbf{e}_2$.
  The phase portrait is shown on the right of Figure \ref{global dynamics} (iv)

 Moreover, $\omega_d(x)$ is one of equilibria on $\Gamma_{12}$ if $x\in \textrm{Int}\mathbb{R}^3_+$; $\omega_d(x)\in \{Q: Q\in \Gamma_{12}\bigcup \Gamma_{13}\}$
 if $x\in \partial \mathbb{R}^3_+\setminus \{O\}$.
The corresponding phase portrait is shown on the left of Figure \ref{global dynamics} (iv).
  \item [(v)]
   If $\alpha+d_i=0$ for all $i\in \{1, 2, 3\}$,
  then $\mathbb{S}^2_+$ consists of equilibria. The phase portrait is shown on the right of Figure \ref{global dynamics} (v).

 Moreover, $\omega_d(x)=\{Q_x\}$ if $x\in \mathcal{L}(x)$, where $Q_x:=\mathcal{L}(x)\bigcap \mathbb{S}^2_+$. The corresponding phase portrait is shown on the left of Figure \ref{global dynamics} (v).
\end{itemize}
\end{theorem}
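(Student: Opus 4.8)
The plan is to reduce the three-dimensional classification to a planar one on the invariant sphere and then read off the global phase portraits from the first integrals of Lemma \ref{first integral} together with the local data already collected in Proposition \ref{pro1}, Table \ref{SecType_eq1_atleast7}, and Lemmas \ref{isolated} and \ref{contiunnm}. By Lemma \ref{dglobal}, $\mathbb{S}^2_+$ is the global attractor in $\mathbb{R}^3_+\setminus\{O\}$, so every $\omega_d(x)$ with $x\neq O$ lies on $\mathbb{S}^2_+$; since $\mathcal{L}(y)\cap\mathbb{S}^2_+$ is a single point for each $y$, it suffices to determine the orbit structure of the flow restricted to $\mathbb{S}^2_+$ and then transport it back along rays. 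On $\textrm{Int}\,\mathbb{S}^2_+$ I would use the chart $(x_1,x_2)$ to obtain the planar reduced system \eqref{reduce-2-dimension} (and its analogue built from $H_2$ when $3\alpha+d_1+d_2+d_3=0$), so the task becomes a planar classification on the open triangle $\{x_1,x_2>0,\ x_1^2+x_2^2<1\}$.

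The key tool is integrability. When $3\alpha+d_1+d_2+d_3\neq0$ the reduced system carries the first integral $\tilde H_1$, and when the sum vanishes it carries the analogue coming from $H_2$; in either case $\textrm{Int}\,\mathbb{S}^2_+$ is foliated by the level curves $\Gamma(h)=\mathbb{S}^2_+\cap\Lambda_1(h)$. Each phase portrait is then obtained by describing these level curves and splicing them with the boundary dynamics on $\partial\mathbb{S}^2_+$, which is the invariant heteroclinic skeleton formed by the arcs $\Gamma_{ij}$ of \eqref{curves} and the corners $\textbf{e}_i$, whose stable/unstable manifolds are given in Lemma \ref{isolated}. For a concrete trajectory I would invoke the Poincar\'e--Bendixson theorem for the planar reduced system: its $\omega$-limit set is either an equilibrium, a closed level oval $\Gamma(h)$, or a polycycle on $\partial\mathbb{S}^2_+$, and the sign pattern of $\alpha+d_i$ fixed in Proposition \ref{pro1} together with the eigenvalue table decides which alternative occurs. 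Cases (ii) and (iii.b) reduce to a single attracting node on $\mathbb{S}^2_+$; cases (iii.a), (iv), (v) reduce to identifying the attracting portion of the equilibrium curves $\Gamma_{12},\Gamma_{13}$ or of the whole sphere, using the one-dimensional center directions and the signs of $\lambda_3$ recorded in Lemma \ref{contiunnm}.

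The hard part will be case (i), where $Q^*$ is a center: I must prove that $\textrm{Int}\,\mathbb{S}^2_+\setminus\{Q^*\}$ consists entirely of the periodic orbits $\Gamma(h)$, $h\in(h^*,\infty)$, with $h^*=H_1(Q^*)$ from \eqref{h2}, and that no interior trajectory accumulates on the boundary polycycle. The Poincar\'e center theorem, already invoked via $\tilde H_1$ in Lemma \ref{isolated}(i), gives a punctured neighbourhood of $Q^*$ filled by ovals; to globalize I would show that $\tilde H_1$ is proper on $\textrm{Int}\,\mathbb{S}^2_+$ with a unique critical point at $Q^*$, that its value increases monotonically from $h^*$ at $Q^*$ to $+\infty$ as one approaches $\partial\mathbb{S}^2_+$ (this divergence follows from the positive exponents in $H_1$ when all $\alpha+d_i>0$), and that each level set $\{\tilde H_1=h\}$ is a single smooth oval encircling $Q^*$. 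Properness and the absence of further critical points force every level set to be a compact connected closed orbit, so $\omega_d(x)=\Gamma(h)$ exactly when $x\in\Lambda_1(h)$, while $\tilde H_1\to+\infty$ at the boundary rules out accumulation on $\partial\mathbb{S}^2_+$ from inside. The boundary is then the heteroclinic polycycle joining $\textbf{e}_1,\textbf{e}_2,\textbf{e}_3$ obtained by matching the one-dimensional stable/unstable manifolds on the arcs $\Gamma_{ij}$.

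Finally I would lift the sphere picture back to $\mathbb{R}^3_+$. Since each ray $\mathcal{L}(y)$ meets $\mathbb{S}^2_+$ in one point and $\mathbb{S}^2_+$ attracts $\mathbb{R}^3_+\setminus\{O\}$, the cones $\Lambda_1(h)$ (resp. $\Lambda_2(h)$) are invariant and every trajectory off the stable manifolds collapses onto the corresponding orbit of the reduced flow; this yields the stated $\omega_d(x)$ in each case, for instance $\omega_d(x)=\Gamma(h)$ on $\Lambda_1(h)$, $\omega_d(x)=\{Q^*\}$ on $\mathcal{L}(Q^*)$, and $\omega_d(x)\in\{\textbf{e}_1,\textbf{e}_2,\textbf{e}_3\}$ on $\partial\mathbb{R}^3_+\setminus\{O\}$ in case (i). Collecting the six sign patterns of $(\alpha+d_1,\alpha+d_2,\alpha+d_3)$ singled out before the theorem, and checking that the resulting portraits are pairwise topologically inequivalent (they differ in the number and type of equilibria, the presence of periodic orbits, and the dimension of the equilibrium set), completes the classification into exactly six classes.
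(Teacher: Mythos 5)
Your proposal follows essentially the same route as the paper's proof: reduce to the invariant sphere via Lemma \ref{dglobal}, analyze the reduced planar system \eqref{reduce-2-dimension} using the first integrals of Lemma \ref{first integral} together with the local equilibrium analysis of Lemmas \ref{isolated} and \ref{contiunnm}, and lift the sphere dynamics back to $\mathbb{R}^3_+$ along the invariant rays and cones $\Lambda_i(h)$. Your properness/unique-critical-point argument for $\tilde{H}_1$ in case (i), and the explicit appeal to Poincar\'e--Bendixson, supply details that the paper only asserts via the Poincar\'e center theorem and the level-set foliation, so they constitute a sound refinement of, rather than a departure from, the paper's argument.
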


\begin{figure}[H]
\centering
\begin{subfigure}{\textwidth}
\centering
\includegraphics[width=8.3cm, height=3.5cm]{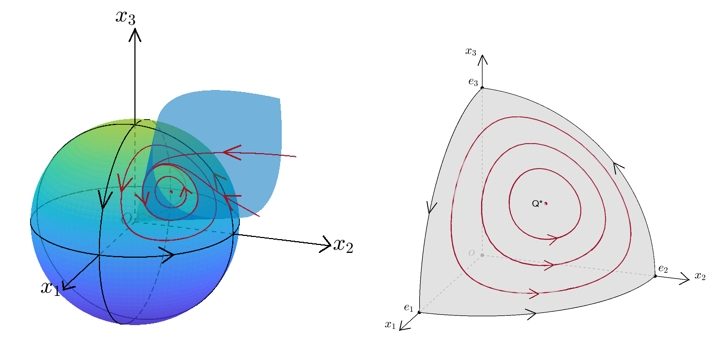}

(i) $\alpha+d_1>0, \alpha+d_2>0, \alpha+d_3>0$
\end{subfigure}
\end{figure}

\begin{figure}[H]
\begin{subfigure}{\textwidth}
\centering
  \includegraphics[width=8.3cm, height=3.5cm]{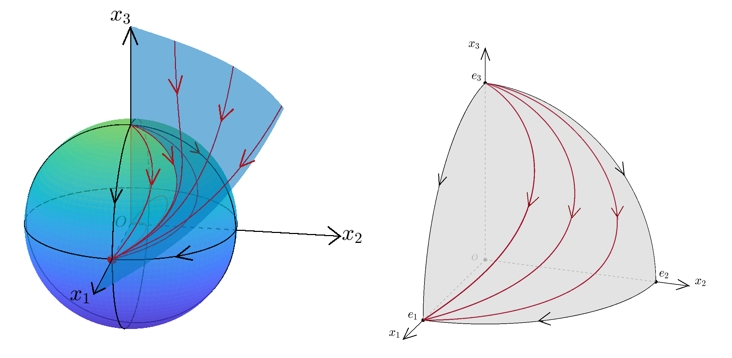}

(ii) $\alpha+d_1<0, \alpha+d_2>0, \alpha+d_3<0$
\end{subfigure}

\begin{subfigure}{\textwidth}
\centering
\includegraphics[width=8.5cm, height=4cm]{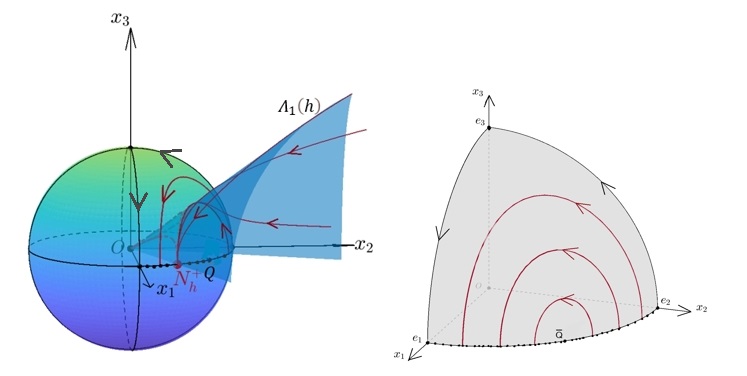}

(iii.a) $\alpha+d_1=0, \alpha+d_2>0, \alpha+d_3>0$
\end{subfigure}

\begin{subfigure}{\textwidth}
\centering
  \includegraphics[width=8.5cm, height=3.5cm]{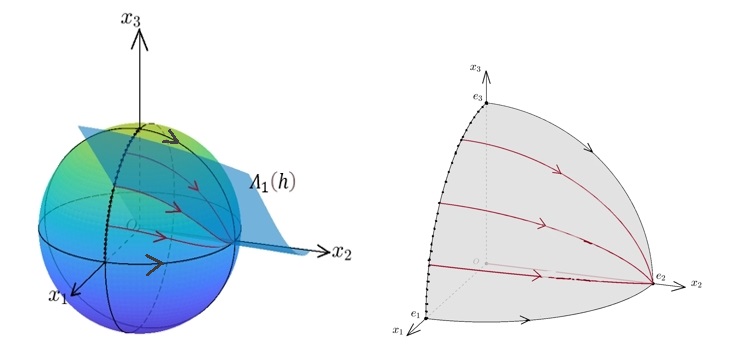}

 (iii.b)  $\alpha+d_1>0, \alpha+d_2=0, \alpha+d_3<0$
  \end{subfigure}

  \begin{subfigure}{\textwidth}
\centering
  \includegraphics[width=8.5cm, height=3.5cm]{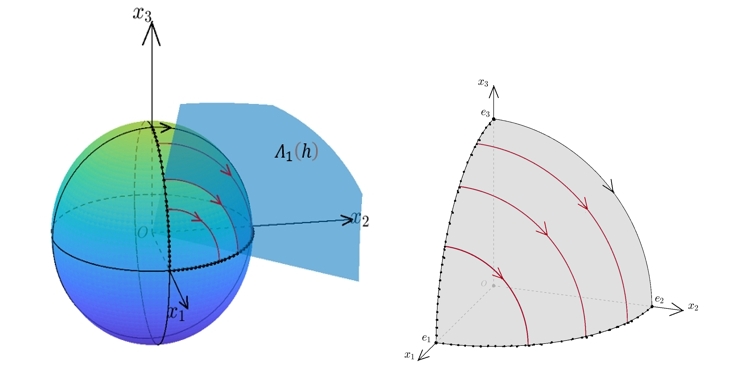}

(iv) $\alpha+d_1=0, \alpha+d_2=0, \alpha+d_3<0$
\end{subfigure}

   \begin{subfigure}{\textwidth}
\centering
\includegraphics[width=8.3cm, height=3.5cm]{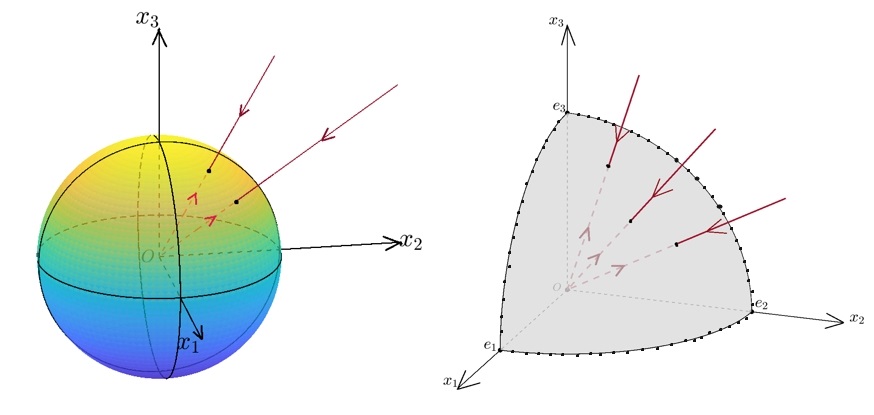}

(v) $\alpha+d_1=0, \alpha+d_2=0, \alpha+d_3=0$
\end{subfigure}

  \caption{Global dynamics of system \eqref{sys6} in $\mathbb{R}^3_+$ and $\mathbb{S}^2_+$}
\label{global dynamics}
 \end{figure}

\begin{proof}
 We first claim that $\omega_d(x)\subseteq \{\Gamma_{12}, \Gamma_{13}, \Gamma_{23}\}$ for any $x\in \partial\mathbb{R}^3_+\setminus \{O\}$. In fact,  $\omega_d(x)\subseteq \mathbb{S}^2_+$ for any $x\in \mathbb{R}^3_+\setminus \{O\}$ by Lemma \ref{dglobal}. Note that
 system \eqref{sys6} has three invariant planes $\{x\in \mathbb{R}^3:\ x_i=0\}$ with $i=1,2,3$.  Thus,
  $$\omega_d(x)\subseteq \mathbb{S}^2_+\cap \left(\cup_{i=1}^3\{x\in \mathbb{R}_+^3:\ x_i=0\}\right)=\{\Gamma_{12}, \Gamma_{13}, \Gamma_{23}\}$$ for any $x\in \partial\mathbb{R}^3_+\setminus \{O\}$. Moreover, we prove that $\omega_d(x)$ is one of equilibria on $\{\Gamma_{12}, \Gamma_{13}, \Gamma_{23}\}$ for any $x\in \partial\mathbb{R}^3_+\setminus \{O\}$. More precisely, if $x\in \{x\in \mathbb{R}_+^3:\ x_i=0\}$, we verify that $\omega_d(x)$ is one of equilibria on $\Gamma_{kl}$, $k,l\in\{1,2,3\}\setminus\{i\}$ and $k<l$. Due to the similar method, we only  verify that $\omega_d(x)$ is one of equilibria on $\Gamma_{12}$ if $x\in \{x\in \mathbb{R}_+^3:\ x_3=0\}$.

  On the invariant plane $\{x\in \mathbb{R}^3:\ x_3=0\}$,   system \eqref{sys6} can be reduced to the following two-dimensional differential system
  \begin{equation}\label{reduce-2-dimension-plane}
\begin{cases}
\frac{dx_1}{dt}=x_1(\alpha-\alpha x_1^2-(2\alpha+d_1)x_2^2),\\
\frac{dx_2}{dt}=x_2(\alpha+d_1x_1^2-\alpha x_2^2)
\end{cases}
\end{equation}
in $\mathbb{R}_+^2$. It can be checked that system \eqref{reduce-2-dimension-plane} in $\mathbb{R}_+^2$ has only three boundary equilibria $(0,0)$, $(1,0)$ and $(0,1)$  if $\alpha+d_1\not=0$,  and there are infinitely many equilibria filled with $\Gamma_{12}$ if $\alpha+d_1=0$. When $\alpha+d_1\not=0$, system \eqref{reduce-2-dimension-plane} has a stable hyperbolic node (saddle) $(1,0)$ and a hyperbolic saddle (node) $(0,1)$ if $\alpha+d_1<0$ ($\alpha+d_1>0$, resp.). Hence, the boundary equilibrium $(1,0)$ ($(0,1)$) is a global attractor for system  \eqref{reduce-2-dimension-plane} in $\mathbb{R}_+^2\setminus\{(0,0), (0,1)\}$ ($\mathbb{R}_+^2\setminus\{(0,0), (1,0)\}$, resp.) if $\alpha+d_1<0$ ($\alpha+d_1>0$, resp.). This implies that $\omega_d(x)$ is one of equilibria on the endpoints of $\Gamma_{12}$ if $x\in \{x\in \mathbb{R}_+^3:\ x_3=0\}$ and $\alpha+d_1\not=0$. On the other hand, if $\alpha+d_1=0$, then system \eqref{reduce-2-dimension-plane} becomes
\begin{equation}\label{reduce-2-dimension-plane1}
\begin{cases}
\frac{dx_1}{dt}=x_1(\alpha-\alpha x_1^2-\alpha x_2^2),\\
\frac{dx_2}{dt}=x_2(\alpha-\alpha x_1^2-\alpha x_2^2)
\end{cases}
\end{equation}
Any a $Q\in \Gamma_{12}$ is a degenerate equilibrium with a negative eigenvalue of system \eqref{reduce-2-dimension-plane1}.  Consider the ray $\mathcal{L}(Q)$ passing through $Q$, we have that $\mathcal{L}(Q)$ is the one-dimensional stable manifold of $Q$ by computation.
Hence,  $\omega_d(x)=\{Q\}$ if $x\in \mathcal{L}(Q)$ for any a $Q\in \Gamma_{12}$. This leads that $\omega_d(x)$ is one of equilibria on $\Gamma_{12}$ if $x\in \{x\in \mathbb{R}_+^3:\ x_3=0\}$.

In the following  it is to discuss the dynamics of system \eqref{sys6} on $\textrm{Int}\mathbb{S}^2_+$ and in $\textrm{Int}\mathbb{R}^3_+$ for the case (i)-(v). We consider system \eqref{sys6} restricted to $\mathbb{S}^2_+$ and obtain the reduced two-dimensional system \eqref{reduce-2-dimension}.
On the one hand, the dynamics of system \eqref{reduce-2-dimension} can be obtained by Lemma \ref{isolated} and Lemma \ref{contiunnm}. This leads to the conclusions (i) - (v) on $\mathbb{S}^2_+$, see the right pictures in Figure \ref{global dynamics}.

On the other hand,  system \eqref{sys6} has one of the two first integrals $H_1(x_1,x_2,x_3)$ and $H_2(x_1,x_2,x_3)$ in $\textrm{Int}\mathbb{R}^3_+$ by Lemma \ref{first integral}. This yields that $\Lambda_i(h)$ defined by \eqref{cone0} is invariant for each $h\in I_i$,  $i=1,2$. Taking into account the invariance of $\mathbb{S}^2_+$,  one has that the intersection of $\Lambda_i(h)$ and $\mathbb{S}^2_+$ defined by $\Lambda_i(h)\bigcap\mathbb{S}^2_+$ is an orbit of system \eqref{sys6}. In $\Lambda_i(h)$, every points $x\in \Lambda_i(h)\setminus\{\Lambda_i(h)\bigcap\mathbb{S}^2_+\}$ will be attracted by $\Lambda_i(h)\bigcap\mathbb{S}^2_+$, that is $\omega_d(x)=\Lambda_i(h)\bigcap\mathbb{S}^2_+$ for $x\in \Lambda_i(h)$, see the left pictures in Figure \ref{global dynamics}. The proof is finish.
\end{proof}

As a consequence of Theorem \ref{gd1},
we give a decomposition of $\mathbb{R}_+^3$
according to attractive domains of orbits of system \eqref{sys6}. Recall that $\mathcal{E}$ denotes the set of all equilibria of system \eqref{sys6}, and $\mathcal{A}(\cdot)$ represents the attractive domain of an orbit.
\begin{corollary}\label{decomposition}
\begin{itemize}
  \item [(I)]
  If $\alpha+d_i>0 \ (<0)$  for $i=1,2,3$,  then
  $$\mathbb{R}_+^3=\{\bigcup_{Q\in \mathcal{E}}\mathcal{A}(Q)\}
  \cup \{\bigcup_{h\in ({h}^*, \infty) }\mathcal{A}(\Gamma(h))\}.$$
  \item [(II)]
  If either $\prod_{i=1}^3(\alpha+d_i)= 0$ or $\prod_{i=1}^3(\alpha+d_i) \neq 0$ and there exist $i\neq j$, $i,j\in \{1,2,3\}$,
  such that $(\alpha+d_i)(\alpha+d_j)<0$,
  then
  $$\mathbb{R}_+^3=\bigcup_{Q\in \mathcal{E}}\mathcal{A}(Q).$$
\end{itemize}
\end{corollary}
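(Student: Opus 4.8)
The plan is to read off the decomposition directly from the global classification of Theorem \ref{gd1}, using one elementary reduction: by Lemma \ref{dglobal} system \eqref{sys6} is dissipative, so every forward trajectory is bounded and $\omega_d(x)$ is a nonempty compact subset of $\mathbb{S}_+^2$. Consequently, whenever $\omega_d(x)$ is a single point $\{Q\}$ one has $\lim_{t\to+\infty}\Psi(t,x)=Q$, i.e. $x\in\mathcal{A}(Q)$, and whenever $\omega_d(x)=\Gamma(h)$ one has $x\in\mathcal{A}(\Gamma(h))$. Since the inclusion of the right-hand side into $\mathbb{R}_+^3$ is trivial, the entire task is to prove coverage: every $x\in\mathbb{R}_+^3$ falls into one of the two types of limit set catalogued in Theorem \ref{gd1}.

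For part (I) take $\alpha+d_i>0$ for all $i$ (the case $\alpha+d_i<0$ is analogous). I first record that the first integral $H_1$ of Lemma \ref{first integral} is homogeneous of degree $0$: the exponents $-2(\alpha+d_{4-i})/(3\alpha+d_1+d_2+d_3)$ sum to $-2$, exactly cancelling the degree of the factor $\|x\|^2$, so $H_1$ is constant along rays. Restricted to $\textrm{Int}\mathbb{S}_+^2$ it attains its minimum $h^*=H_1(Q^*)$ uniquely at the center $Q^*$ and tends to $+\infty$ toward $\partial\mathbb{S}_+^2$; hence $\textrm{Int}\mathbb{R}_+^3$ is foliated by the level cones $\{\Lambda_1(h):h\ge h^*\}$, with $\Lambda_1(h^*)=\mathcal{L}(Q^*)$ and $\Lambda_1(h)\cap\mathbb{S}_+^2=\Gamma(h)$ for $h>h^*$. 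Now classify an arbitrary $x\in\mathbb{R}_+^3$: if $x=O$ then $x\in\mathcal{A}(O)$; if $x\in\partial\mathbb{R}_+^3\setminus\{O\}$ then Theorem \ref{gd1}(i) gives $\omega_d(x)\in\{\textbf{e}_1,\textbf{e}_2,\textbf{e}_3\}$, so $x\in\mathcal{A}(\textbf{e}_i)$; and if $x\in\textrm{Int}\mathbb{R}_+^3$ set $h=H_1(x)\ge h^*$, so that Theorem \ref{gd1}(i) yields $\omega_d(x)=\{Q^*\}$ (and $x\in\mathcal{A}(Q^*)$) when $h=h^*$ and $\omega_d(x)=\Gamma(h)$ (and $x\in\mathcal{A}(\Gamma(h))$) when $h>h^*$. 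Gathering these cases gives precisely $\mathbb{R}_+^3=\bigcup_{Q\in\mathcal{E}}\mathcal{A}(Q)\cup\bigcup_{h\in(h^*,\infty)}\mathcal{A}(\Gamma(h))$.

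For part (II) the two hypotheses are exactly the parameter regimes of Theorem \ref{gd1}(ii),(iii.a),(iii.b),(iv),(v): the condition $\prod_{i=1}^3(\alpha+d_i)=0$ corresponds to (iii.a),(iii.b),(iv),(v), while $\prod_{i=1}^3(\alpha+d_i)\neq0$ together with a sign change $(\alpha+d_i)(\alpha+d_j)<0$ corresponds to (ii), up to the coordinate permutations and time reversals identified in Subsection \ref{classdeterministic}. In each of these regimes Theorem \ref{gd1} asserts that $\omega_d(x)$ is a single equilibrium for every $x$ — possibly a non-isolated one, such as a point of $\Gamma_{12}$ in (iii.a) and (iv), or $Q_x=\mathcal{L}(x)\cap\mathbb{S}_+^2$ in (v), but in all cases an element of $\mathcal{E}$. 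Hence $x\in\mathcal{A}(Q)$ for some $Q\in\mathcal{E}$, and therefore $\mathbb{R}_+^3=\bigcup_{Q\in\mathcal{E}}\mathcal{A}(Q)$.

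The only point carrying genuine content, and hence the main obstacle, is the coverage of $\textrm{Int}\mathbb{R}_+^3$ in part (I): one must confirm that the level sets of $H_1$ sweep out the whole interior with the degenerate leaf $h=h^*$ equal to the stable ray $\mathcal{L}(Q^*)$. This rests on the degree-$0$ homogeneity of $H_1$ and on $h^*$ being the minimal value of $H_1$ on $\mathbb{S}_+^2$, attained only at $Q^*$. Everything else is bookkeeping across the finitely many already-resolved regimes of Theorem \ref{gd1}, combined with the standard fact that a singleton $\omega$-limit set of a bounded orbit forces convergence.
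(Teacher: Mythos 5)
Your proof is correct and takes essentially the same route as the paper: the paper states Corollary \ref{decomposition} as an immediate consequence of Theorem \ref{gd1}, whose case-by-case description of $\omega_d(x)$ (periodic orbits $\Gamma(h)$ on the cones $\Lambda_1(h)$, $Q^*$ on $\mathcal{L}(Q^*)$, and equilibria in all remaining regimes) is exactly what you invoke. The supporting details you fill in — the degree-zero homogeneity of $H_1$, the minimality of $h^*$ attained only at $Q^*$, and the resulting foliation of $\textrm{Int}\mathbb{R}_+^3$ by level cones, plus convergence from singleton $\omega$-limit sets of bounded orbits — are precisely the facts the paper records after Lemma \ref{first integral} and inside the proof of Theorem \ref{gd1}, so there is no substantive difference in approach.
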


\subsection{Global bifurcations}  \label{Subsec-Bifur}

From Theorem \ref{gd1}, one can see that
 dynamics of system \eqref{sys6} changes significantly under the change of parameters $(\alpha+d_1,\alpha+d_2,\alpha+d_3)$ in the neighborhood of $(0,0,0)$. This implies some bifurcation phenomena occur, which is related to loss of the {\it hyperbolicity} of some
 orbits such as equilibrium, and periodic orbits of system \eqref{sys6}. In the subsection we choose $(\alpha+d_1,\alpha+d_2,\alpha+d_3)$ as bifurcation parameters.  For simplicity, let $m_i:= \alpha+d_i$,  $i=1, 2, 3$. We consider global bifurcation of system \eqref{sys6}
 when bifurcation parameters ${\bf m}:=(m_1, m_2, m_3)$ vary in the parameter space $\mathbb{R}^3$. It is clear that the origin  ${\bf 0}:=(0, 0 ,0)\in \mathbb{R}^3$ is a bifurcation value (or bifurcation point) since system \eqref{sys6} has infinitely many degenerated equilibria
 filling $\mathbb{S}_+^2$ as ${\bf m}={\bf 0}$. According to the classification of global dynamics in Theorem \ref{gd1}, we know that there are three bifurcation lines defined by
 $$l_i:=\{(m_1, m_2, m_3)\in \mathbb{R}^3: m_j=m_k=0\}, \ i\in \{1,2,3\}, j,k\in \{1,2,3\}\setminus \{i\},$$
  and  three bifurcation planes defined  by
  $$\textrm{\uppercase\expandafter{\romannumeral2}}_i:=\{(m_1, m_2, m_3)\in \mathbb{R}^3:\  m_i=0\}, \ i=1, 2, 3,
  $$
  see the colored lines and colored planes in Figure \ref{parameter}.

\begin{figure}[H]
  \centering
  
  \includegraphics[width=5cm]{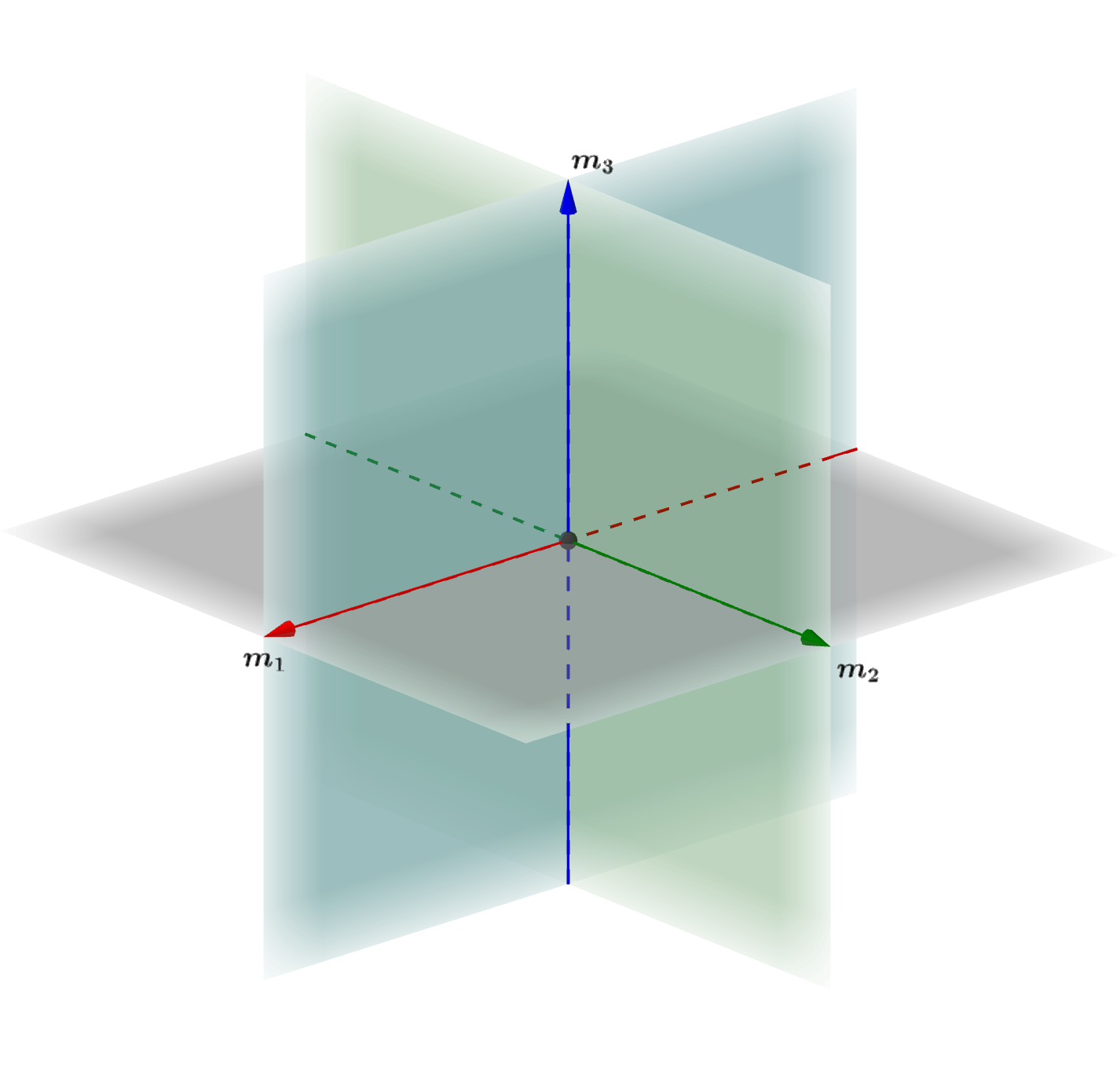}\\
  \caption{Global bifurcation diagram in parameter space $\mathbb{R}^3$}\label{parameter}
\end{figure}

 Moreover, the bifurcation point ${\bf 0}$ divides each bifurcation line $l_i, i=1, 2, 3$ into two parts as follows.
 $$l_i^+:=\{(m_1, m_2, m_3)\in l_i: m_i>0\}, \ l_i^-:=\{(m_1, m_2, m_3)\in l_i: m_i<0\}.$$
 The bifurcation lines divide each bifurcation plane \uppercase\expandafter{\romannumeral2}$_i$ into four parts denoted by \uppercase\expandafter{\romannumeral2}$_i^j$, $j=1,2,3,4$,
 and the bifurcation planes divide the parameter space $\mathbb{R}^3$ into eight parts denoted by $D^+_j$, $D^-_j$ for $j=1,..., 4$.
 Thus, the parameter space $\mathbb{R}^3$ is divided into 27 regions, that is, the point ${\bf 0}$, $l^+_i$, $l^-_i$, \uppercase\expandafter{\romannumeral2}$^j_i$, $D^+_j$, $D^-_j$ for $i=1, 2, 3$ and $j=1,2,3,4$. Due to the symmetry, we only give the bifurcation diagrams in the bifurcation line $l_2$ (see Figure \ref{line}), in the bifurcation plane $\textrm{\uppercase\expandafter{\romannumeral2}}_1$ (see Figure \ref{plane}) and in the case where  $m_3>0$ (see Figure \ref{positive}). Here
\begin{align*}
& \textrm{\uppercase\expandafter{\romannumeral2}}_1^1:=\{m_1=0, m_2>0, m_3>0\}; \ \ \textrm{\uppercase\expandafter{\romannumeral2}}_1^2:=\{m_1=0, m_2<0, m_3>0\}; \\
&  \textrm{\uppercase\expandafter{\romannumeral2}}_1^3:=\{m_1=0, m_2<0, m_3<0\};\ \  \textrm{\uppercase\expandafter{\romannumeral2}}_1^4:=\{m_1=0, m_2>0, m_3<0\}; \\
& \textrm{\uppercase\expandafter{\romannumeral2}}_2^1:=\{m_1>0, m_2=0, m_3>0\}; \ \ \textrm{\uppercase\expandafter{\romannumeral2}}_2^2:=\{m_1<0, m_2=0,m_3>0\}; \\
& D^+_1:=\{m_1>0, m_2>0, m_3>0\}; \ \ D^+_2:=\{m_1<0, m_2>0, m_3>0\}; \\
& D^+_3:=\{m_1<0, m_2<0, m_3>0\}; \ \ D^+_4:=\{m_1>0, m_2<0, m_3>0\};
\end{align*}

\begin{figure}[H]\label{bifurcation}
  \centering
  \includegraphics[width=10cm, height=5cm]{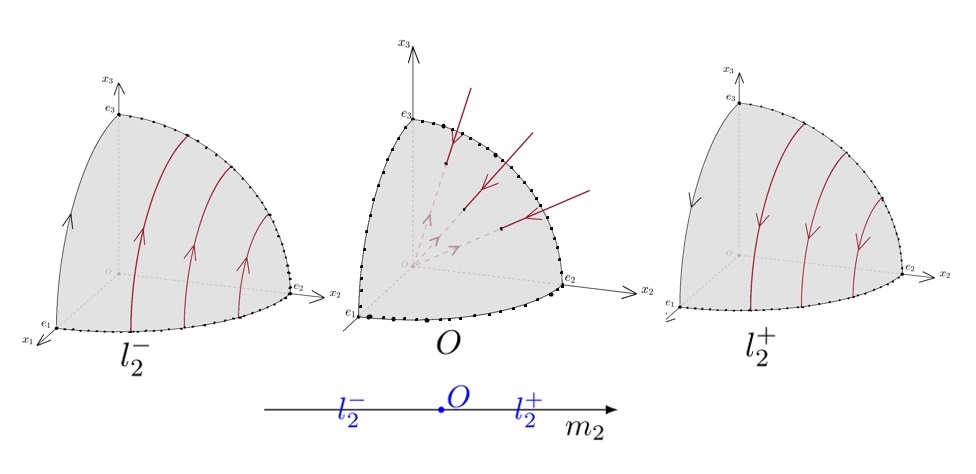}\\
 \caption{Bifurcation diagrams and phase portraits when $m_1=m_3=0$ .}
 \label{line}
\end{figure}

\begin{figure}[H]
  \centering
  \includegraphics[width=11cm, height=8cm]{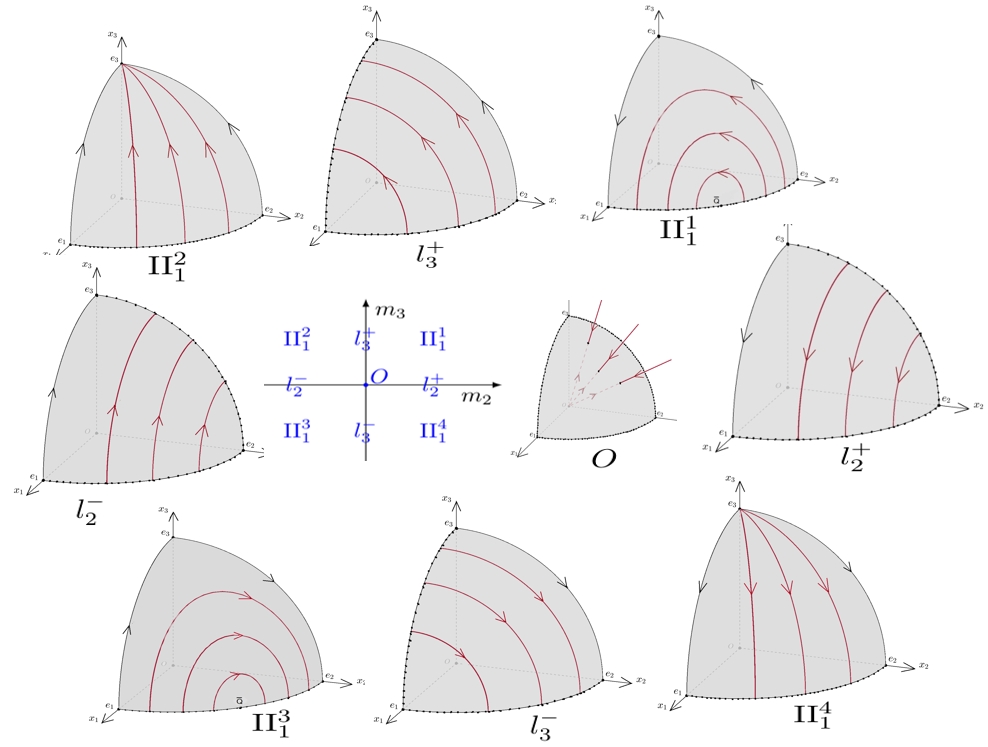}\\
 \caption{Bifurcation diagrams and corrsponding phase portraits in plane $m_1=0$.}
 \label{plane}
\end{figure}

\begin{figure}[H]
  \centering
  \includegraphics[width=11cm, height=8cm]{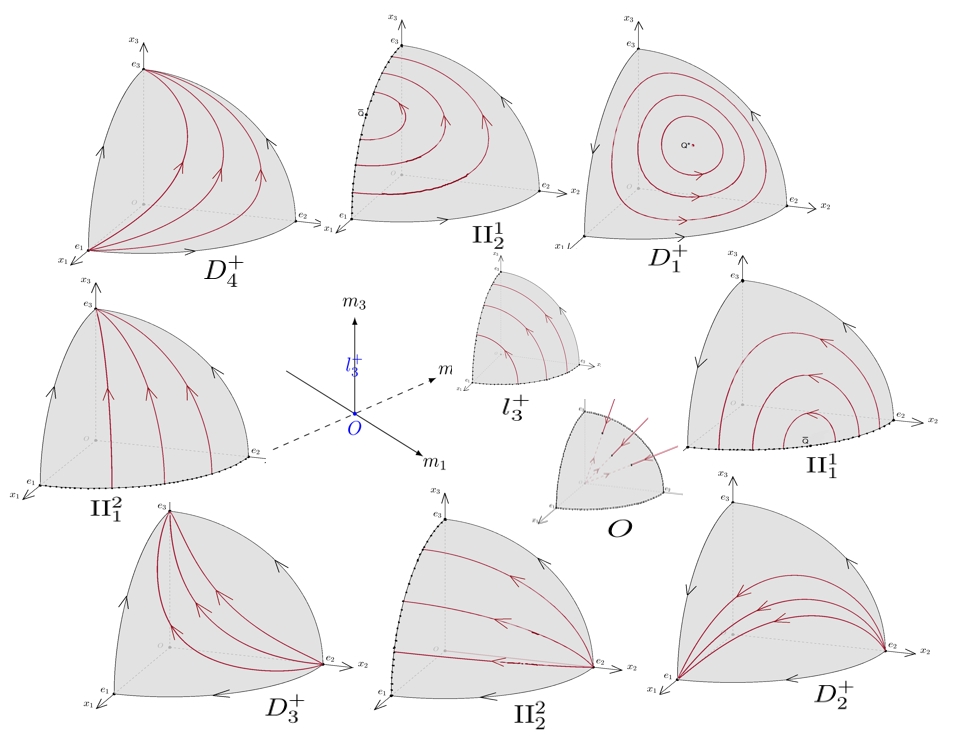}\\
 \caption{Bifurcation diagrams and the corresponding phase portraits in $m_3>0$.}
 \label{positive}
 \end{figure}

\section{Stochastic decomposition formula}   \label{Sec-Stoch-form}
This section contains the key stochastic decomposition formula 
connecting deterministic and stochastic Kolmogorov systems.
One important object here is the stochastic logistic-type equation
(see \eqref{sysg} below).
Several useful dynamical properties of logistic-type equations
are studied in Subsection \ref{logistic-type}.

\subsection{General case}    \label{Subsec-Stoch-general}
Consider more general stochastic Kolmogorov system with identical intrinsic growth rate in $\mathbb{R}^n$
\begin{equation}\label{ck2}
\begin{cases}
d{x_1}=x_1(\alpha+f_1(x_1, x_2, \dots , x_n))dt+\sigma x_1dW_t,\\
d{x_2}=x_2(\alpha+f_2(x_1, x_2,\dots, x_n))dt+\sigma x_2dW_t,\\
\dots\\
d{x_n}=x_n(\alpha+f_n(x_1, x_2,\dots, x_n))dt+\sigma x_ndW_t.
\end{cases}
\end{equation}
Here, $(x_1, x_2,\dots, x_n)\in \mathbb{R}^n$, $\alpha, \sigma \in \mathbb{R}$,
and  $\{f_i\}$ are homogeneous polynomials in $\mathbb{R}[x]$ with degree $m \in [1,\infty)$
of the form
\begin{align*}
   f_i(x)=\sum_{k_1+\dots+k_n=m}a^{(i)}_{k_1, k_2,\dots, k_n}x_1^{k_1}x_2^{k_2}\cdots
   x_n^{k_n}, \ \ i=1,\dots,n,
\end{align*}
where $0\leq k_i\leq m$. In particular,
when $\sigma=0$,
we have the deterministic Kolmogorov system
\begin{equation}\label{ck1}
\begin{cases}
d{x_1}=x_1(\alpha+f_1(x_1, x_2,\dots, x_n))dt,\\
d{x_2}=x_2(\alpha+f_2(x_1, x_2,\dots, x_n))dt,\\
\cdots\\
d{x_n}=x_n(\alpha+f_n(x_1, x_2,\dots, x_n))dt.
\end{cases}
\end{equation}

Theorem \ref{sc} below presents the key stochastic decomposition formula, which is a general form of the formula first proposed by  Chen et al in \cite{jiang1},
\begin{theorem} (Stochastic decomposition formula)  \label{sc}
Let $\Phi=\Phi(t,\omega, x)$ and $\Psi= \Psi(t,x)$
be the solutions to \eqref{ck2} and \eqref{ck1}, respectively,
with the initial value $x\in \mathbb{R}^n$.
Then, we have
\begin{equation}\label{sdf1}
\Phi(t,\omega, x)=g(t, w, g_0)\Psi(\int_0^tg^m(s,\omega, g_0)ds, \frac x{g_0}),\; x\in \mathbb{R}^n,
\end{equation}
where $g=g(t, \omega, g_0)$ is the positive solution of the following stochastic logistic-type equation
\begin{align*}
   dg=g(\alpha-\alpha g^m)dt+\sigma gdW_t
\end{align*}
with the initial value $g(0, \omega, g_0)=g_0>0.$
\end{theorem}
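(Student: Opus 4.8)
The plan is to verify the claimed decomposition by direct substitution, showing that the right-hand side of \eqref{sdf1} solves the stochastic system \eqref{ck2} with the correct initial condition; uniqueness of solutions then closes the argument. First I would set up the building blocks. Let $g=g(t,\omega,g_0)$ be the positive solution of the scalar stochastic logistic-type equation $dg=g(\alpha-\alpha g^m)\,dt+\sigma g\,dW_t$ with $g(0)=g_0$, and let $\Psi(\cdot,\cdot)$ denote the deterministic flow of \eqref{ck1}. Introduce the random time change $\tau(t):=\int_0^t g^m(s,\omega,g_0)\,ds$, which is strictly increasing and $C^1$ in $t$ for each fixed $\omega$ since $g>0$, with $\tau'(t)=g^m(t,\omega,g_0)$. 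Define the candidate process
\begin{equation*}
  Y(t,\omega,x):=g(t,\omega,g_0)\,\Psi\!\left(\tau(t),\tfrac{x}{g_0}\right),
\end{equation*}
and write $\Psi_i$ for its $i$-th component. The goal is to show each $Y_i$ satisfies the $i$-th equation of \eqref{ck2}.

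The main computation is an application of the Itô formula to the product $Y_i=g\cdot\Psi_i(\tau(t),x/g_0)$. Since $\tau(t)$ is of bounded variation (indeed absolutely continuous), the inner factor $\Psi_i(\tau(t),x/g_0)$ has no martingale part, so the Itô product rule gives
\begin{equation*}
  dY_i = \Psi_i\,dg + g\,d\big(\Psi_i(\tau(t),x/g_0)\big),
\end{equation*}
with no cross-variation term. For the first piece I substitute the logistic SDE for $dg$; for the second piece I use the chain rule together with the fact that $\Psi$ solves \eqref{ck1}, so that $\frac{d}{ds}\Psi_i(s,\cdot)=\Psi_i(s,\cdot)\big(\alpha+f_i(\Psi(s,\cdot))\big)$, multiplied by $\tau'(t)=g^m$. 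The crucial structural input is the homogeneity of the polynomials $f_i$: because $f_i$ is homogeneous of degree $m$, one has $f_i\big(g\,\Psi(\tau,x/g_0)\big)=g^m f_i\big(\Psi(\tau,x/g_0)\big)$, which lets me rewrite $f_i$ evaluated at the deterministic flow in terms of $f_i$ evaluated at the actual process $Y$. Assembling the two pieces and using $\Psi_i(\tau,\cdot)=Y_i/g$, I expect the $-\alpha g^m$ drift term from the logistic equation to cancel precisely against the $+\alpha g^m$ contribution coming from the time-changed deterministic drift, leaving exactly $dY_i=Y_i(\alpha+f_i(Y))\,dt+\sigma Y_i\,dW_t$.

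The hard part will be bookkeeping the homogeneity cancellation cleanly: one must carry the factor $g^m$ through both the deterministic drift (via $\tau'$) and the logistic nonlinearity and confirm the degree-$m$ scaling matches so that the $\alpha g^m$ terms annihilate and the linear $\alpha$ term and the noise term survive with the correct coefficients. Once the drift and diffusion of $Y$ are shown to coincide with those of \eqref{ck2}, I verify the initial condition: at $t=0$ we have $g(0)=g_0$ and $\tau(0)=0$, so $Y(0,\omega,x)=g_0\,\Psi(0,x/g_0)=g_0\cdot(x/g_0)=x$, matching the initial value of $\Phi$. Finally, since \eqref{ck2} has locally Lipschitz coefficients (the $f_i$ are polynomials), pathwise uniqueness of the solution holds, and therefore $Y\equiv\Phi$, which is exactly the decomposition formula \eqref{sdf1}. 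I would also note in passing that positivity of $g$ guarantees $\tau$ is a genuine (invertible) time change, so the formula is well defined for all $t\ge 0$.
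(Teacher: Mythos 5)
Your proposal is correct and follows essentially the same route as the paper's own proof: apply It\^o's formula (product rule) to the candidate process $g(t,\omega,g_0)\,\Psi(\int_0^t g^m\,ds,\,x/g_0)$, use the degree-$m$ homogeneity of the $f_i$ to absorb the factor $g^m$ so that the $\pm\alpha g^m$ drift terms cancel, conclude that the candidate solves \eqref{ck2}, and invoke pathwise uniqueness. The only difference is that you spell out details the paper leaves implicit (the vanishing cross-variation due to the bounded-variation time change, the initial-condition check, and the locally Lipschitz justification for uniqueness), which is fine but not a different argument.
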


\begin{proof}
Let ${\Phi}_i(t, \omega, x)$ denote the $i$-th component
of the right-hand side of \eqref{sdf1},
$1\leq i\leq n$.
Applying It\^o's formula we derive
\begin{align*}
d{\Phi}_i =&(g(\alpha-\alpha g^m)dt+\sigma gdW_t)  \Psi_i(\int_0^tg^mds,\frac x{g_0})   \\
&+g^{m+1}\Psi_i(\int_0^tg^mds,\frac x{g_0})\times \\
& \ \ \ \big( \alpha+\sum a^{(i)}_{k_1,...,k_m}\Psi_1^{k_1}(\int_0^tg^mds, \frac x{g_0})\cdot\cdot\cdot\Psi_n^{k_n}(\int_0^tg^mds, \frac x{g_0}) \big) dt\\
=& {\Phi}_i(\alpha-\alpha g^m)dt+\sigma \Phi_i dW_t
    + {\Phi}_ig^m(\alpha+\sum a^{(i)}_{k_1,...,k_m}\Psi_1^{k_1}  \cdot\cdot\cdot\Psi_n^{k_n})dt\\
=& {\Phi}_i(\alpha+g^m\sum a^{(i)}_{k_1,...,k_m}\Psi_1^{k_1}\cdot\cdot\cdot\Psi_n^{k_n})dt +\sigma  {\Phi}_idW_t\\
=& {\Phi}_i(\alpha+\sum a^{(i)}_{k_1,...,k_m}\Phi_1^{k_1} \cdot\cdot\cdot\Phi_n^{k_n})dt+\sigma {\Phi}_idW_t.
\end{align*}
This yields that $\Phi=(\Phi_1, \cdots, \Phi_n)$ satisfies the system \eqref{ck2}.
Thus, in view of the uniqueness of solutions,
we obtain \eqref{sdf1} and finish the proof.
\end{proof}

Now, we come back to our specific stochastic Kolmogorov system  \eqref{sys5}.
Proposition \ref{RDS} below gives the global unique existence of solutions to  \eqref{sys5} for almost every sample path,
which guarantee the solutions do not blow up in forward time.

\begin{proposition}[Generation of a random dynamical system]\label{RDS}
Let $\alpha>0$. Then for any $x\in \mathbb{R}^3$ and almost every $\omega\in \Omega$, there exists a global unique solution $\Phi(\cdot, \omega, x)$ to \eqref{sys5}
with the initial condition $x$ such that $\Phi$ forms a $C^1$ random dynamical system on $(\Omega, \mathcal{F}, \mathbb{P}, (\theta_t)_{t\in \mathbb{R}})$ with independent increments.
\end{proposition}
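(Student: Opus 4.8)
The plan is to derive the proposition from the stochastic decomposition formula of Theorem~\ref{sc}, which for system \eqref{sys5} applies with homogeneity degree $m=2$. Since the drift and diffusion coefficients of \eqref{sys5} are polynomials, hence smooth and locally Lipschitz, the classical theory of stochastic differential equations furnishes, for a.e.\ $\omega$ and every $x$, a unique local maximal solution together with its $C^1$ dependence on the initial datum. The only substantive point is therefore to exclude blow-up in finite time and to verify the random-dynamical-system structure.

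For global existence I would argue through the two factors appearing in \eqref{sdf1}. First consider the scalar logistic-type equation $dg=g(\alpha-\alpha g^2)dt+\sigma g\,dW_t$ with $g(0)=g_0>0$. The substitution $v=g^{-2}$ turns it, via It\^o's formula, into the affine linear equation
\begin{equation*}
dv=\bigl(2\alpha+(3\sigma^2-2\alpha)v\bigr)dt-2\sigma v\,dW_t ,
\end{equation*}
which admits a unique global solution given explicitly by an integrating factor. Because the additive coefficient $2\alpha$ is strictly positive and $v_0=g_0^{-2}>0$, the solution $v$ stays strictly positive for all $t\ge 0$; hence $g=v^{-1/2}$ is globally defined, strictly positive, and never blows up. Second, the deterministic flow $\Psi$ is globally defined in forward time: by Lemma~\ref{dglobal} system \eqref{sys6} is dissipative on $\mathbb{R}^3$, so $\Psi(t,y)$ exists for all $t\ge 0$ and every $y$. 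Since $g>0$ renders the time change $\tau(t):=\int_0^t g^2(s,\omega,g_0)\,ds$ nonnegative and finite, the right-hand side $g(t,\omega,g_0)\,\Psi(\tau(t),x/g_0)$ of \eqref{sdf1} is well-defined for all $t\ge 0$. By Theorem~\ref{sc} this equals $\Phi(t,\omega,x)$, so the maximal solution is in fact global.

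It then remains to assemble the random-dynamical-system structure. Measurability and joint continuity of $(t,\omega,x)\mapsto\Phi(t,\omega,x)$, together with the $C^1$ regularity in $x$ (indeed differentiability of the generated stochastic flow), follow from the generation theorem for stochastic flows of diffeomorphisms associated with SDEs with smooth coefficients. The cocycle identity $\Phi(t+s,\omega,x)=\Phi(t,\theta_s\omega,\Phi(s,\omega,x))$ over the Wiener shift $(\theta_t)$ is a direct consequence of pathwise uniqueness combined with the stationarity of the increments of $W$ under $\theta_t$. Finally, the independent-increments property is inherited from the driving Brownian motion: for disjoint time intervals the solution operators are measurable with respect to the corresponding increments of $W$, which are mutually independent; this is precisely the structure required in the definition recalled in the Appendix.

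The step I expect to be the crux is the global existence, i.e.\ the exclusion of finite-time blow-up. The potentially dangerous cubic nonlinearities of \eqref{sys5} are exactly the ones that the decomposition \eqref{sdf1} isolates into the scalar amplitude $g$, whose equation is dissipative for large $g$, while the remaining angular dynamics is carried by the globally defined, dissipative deterministic flow $\Psi$. In this way the decomposition formula converts what would otherwise be a delicate a priori estimate for a coupled cubic system into two independent and elementary global-existence statements.
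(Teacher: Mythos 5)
Your proof is correct, but it takes a genuinely different route from the paper's. The paper handles non-explosion directly with a Lyapunov function: taking $V(x)=\|x\|^2$, it computes $\mathscr{L}^{\sigma}V(x)=\|x\|^2(-2\alpha\|x\|^2+2\alpha+\sigma^2)\leq(2\alpha+\sigma^2)V(x)$ and invokes the Khasminskii-type criterion of \cite[Theorem 3.3.5]{krz}; the $C^1$ RDS structure with independent increments is then asserted exactly as in your closing paragraph. You instead derive global existence from the decomposition formula of Theorem \ref{sc}, reducing everything to two elementary facts: the logistic equation \eqref{sysg} has a global, strictly positive solution (your substitution $v=g^{-2}$ and the resulting affine equation are correct, and are precisely how the paper's explicit formula \eqref{solug} is obtained), and $\Psi$ is forward-complete by the dissipativity of Lemma \ref{dglobal}. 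Both routes are sound. The paper's is shorter and self-contained, and its Lyapunov computation is recycled later for the existence of stationary measures (Proposition \ref{existence}) and tightness (Lemma \ref{limitmeasure1}); yours yields more information, namely an explicit pathwise representation of $\Phi$ and a structural explanation of why the cubic nonlinearity cannot produce blow-up.

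One step you should make explicit to avoid an appearance of circularity: Theorem \ref{sc} is stated for ``the solutions'' of \eqref{ck2}, so it cannot be quoted as a black box before global existence is known. The clean formulation is that the It\^o computation in the proof of Theorem \ref{sc} shows that the right-hand side of \eqref{sdf1} is a globally defined process solving \eqref{sys5}; by pathwise uniqueness for locally Lipschitz coefficients it coincides with the maximal local solution up to the explosion time, and since it stays finite on compact time intervals the explosion time must be a.s.\ infinite. Your proposal gestures at this (``by Theorem \ref{sc} this equals $\Phi(t,\omega,x)$''), but the uniqueness-up-to-explosion-time identification is the step doing the actual work, and stating it removes any doubt.
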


\begin{proof}
Define the Lyapunov function $V: \mathbb{R}^3\rightarrow \mathbb{R}_+$ by
\begin{equation}\label{V}
V(x):=\|x\|^2 = x_1^2+x_2^2+x_3^2,\ \  (x_1, x_2, x_3)\in \mathbb{R}^3,
\end{equation}
and the operator $\mathscr{L}^\sigma$ by
\begin{equation}\label{Fop}
\mathscr{L}^{\sigma}f(x):= \langle \nabla f(x), b(x) \rangle + \frac12a^{ij}\partial^2_{ij}f(x),\ \  f\in C^2(\mathbb{R}^3).
\end{equation}
Then, by a straightforward computation,
\begin{equation*}\label{FPO}
\begin{aligned}
\mathscr{L}^\sigma V(x) &=\langle \nabla V(x), b(x) \rangle + \frac12 a^{ij}\partial^2_{ij}V(x)\\
& = -2\alpha( x_1^2+x_2^2+ x_3^2)(x_1^2+x_2^2+x_3^2-1)+\sigma^2(x_1^2+x_2^2+x_3^2) \\
&= \|x\|^2(-2\alpha\|x\|^2+2\alpha+\sigma^2)\\
& \leq (2\alpha+\sigma^2)V(x).
\end{aligned}
\end{equation*}
Then, by \cite[Theorem 3.3.5]{krz}, we get the global unique existence of solutions to \eqref{sys5}. Naturally, the solutions generates a $C^1$ RDS with independent increments.
\end{proof}

As a consequence of Theorem \ref{sc},
we have the following
stochastic decomposition formula
for system \eqref{sys5},
corresponding
to the case $n=3$ and $m=2$.

\begin{corollary}
Let $\Psi= \Psi(t,x)$ and $\Phi=\Phi(t,\omega, x)$ be the solutions to
\eqref{sys6} and \eqref{sys5}, respectively,
with the initial value $x\in \mathbb{R}^3$.
Then, we have
\begin{equation}\label{sdf}
\Phi(t,\omega, x)=g(t, w, g_0)\Psi(\int_0^tg^2(s,\omega, g_0)ds, \frac x{g_0}),\ \
x\in \mathbb{R}^3,\; g_0> 0,
\end{equation}
where $g(t,\omega, g_0)$ is the positive solution to the stochastic logistic-type equation
\begin{equation}\label{sysg}
dg=g(\alpha-\alpha g^2)dt+\sigma gdW_t
\end{equation}
with the initial value $g_0\in \mathbb{R}_+$.
\end{corollary}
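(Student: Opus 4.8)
The plan is to recognize the corollary as the specialization of the general stochastic decomposition formula (Theorem \ref{sc}) to the concrete system \eqref{sys5}, so that the proof amounts to verifying that \eqref{sys5} fits the structural hypotheses of \eqref{ck2} and then reading off \eqref{sdf1} with the appropriate values of $n$ and $m$.

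First I would match \eqref{sys5} against the template \eqref{ck2}. Taking $n=3$ and the same intrinsic growth rate $\alpha$, the per-capita perturbations are
\begin{align*}
f_1(x_1,x_2,x_3) &= -\alpha x_1^2 - (2\alpha+d_1)x_2^2 + d_2 x_3^2,\\
f_2(x_1,x_2,x_3) &= d_1 x_1^2 - \alpha x_2^2 - (2\alpha+d_3)x_3^2,\\
f_3(x_1,x_2,x_3) &= -(2\alpha+d_2)x_1^2 + d_3 x_2^2 - \alpha x_3^2,
\end{align*}
and each drift component of \eqref{sys5} is exactly $x_i(\alpha + f_i(x))\,dt + \sigma x_i\,dW_t$. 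The key structural check is that every $f_i$ is a homogeneous polynomial: each monomial appearing is a pure square $x_j^2$, so $f_i$ is homogeneous of degree $m=2$. This is precisely the hypothesis required by Theorem \ref{sc}, with the coefficients $a^{(i)}_{k_1,k_2,k_3}$ read off from the quadratic coefficients above.

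With the hypotheses verified, I would apply Theorem \ref{sc} directly. Substituting $n=3$ and $m=2$ into the decomposition \eqref{sdf1} yields
\[
\Phi(t,\omega,x) = g(t,\omega,g_0)\,\Psi\!\left(\int_0^t g^2(s,\omega,g_0)\,ds,\ \frac{x}{g_0}\right),
\]
which is exactly \eqref{sdf}, and the accompanying scalar logistic-type equation $dg = g(\alpha - \alpha g^m)\,dt + \sigma g\,dW_t$ becomes $dg = g(\alpha - \alpha g^2)\,dt + \sigma g\,dW_t$, i.e.\ \eqref{sysg}. The global well-posedness of the positive solution $g$ with initial datum $g_0>0$, and the existence and uniqueness of $\Phi$ and $\Psi$, are already guaranteed (the latter via Proposition \ref{RDS}), so no additional analytic work is needed.

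Since Theorem \ref{sc} has already been established in full, there is no substantive obstacle here; the only point requiring care is the bookkeeping identification of \eqref{sys5} as an instance of \eqref{ck2}, in particular confirming the homogeneity degree $m=2$ so that the exponent of $g$ inside the time-change integral is correctly $2$ rather than some other value. Any slip would come from misreading the degree of the nonlinearity, which is why I would make the verification of homogeneity explicit before invoking the general formula.
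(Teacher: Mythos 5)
Your proposal is correct and matches the paper's own treatment: the paper states this corollary as an immediate consequence of Theorem \ref{sc} specialized to $n=3$ and $m=2$, which is precisely your argument, with your explicit identification of the homogeneous quadratics $f_1, f_2, f_3$ being the (routine but worthwhile) bookkeeping the paper leaves implicit.
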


In the next subsection,
we collect several dynamical properties of
stochastic logistic-type equations.

\subsection{Stochastic logistic-type equations}   \label{logistic-type}
Let $\alpha>0$.
The stochastic logistic-type equation \eqref{sysg}
has the explicit expression of solutions
\begin{equation}\label{solug}
g(t,\omega, x)=\frac{x\exp\{(\alpha-\frac12\sigma^2) t
   +\sigma W_t(\omega)\}}{(1+2\alpha x^2\int_0^t\exp\{2((\alpha-\frac12\sigma^2) s +\sigma W_{s}(\omega))\}ds)^{\frac12} },
\end{equation}
for $ x\neq 0$ and $g(t, \omega, 0)=0$.

One has the following characterization of random
equilibria for logistic-type equations, 
which 
follows essentially from \cite[Subsection 2.3.7, Subsection 9.3.2]{al}.

\begin{lemma}(Random equilibria) \label{lem1}
\begin{itemize}
   \item[(i)] Let $\sigma^2\ge2\alpha$.
   Then, the zero point is the unique random equilibrium of \eqref{sysg} in $\mathbb{R}^+$.
   Moreover, for all $x>0$ and $\mathbb{P}$-a.e. $\omega\in \Omega$,
   \begin{align}  \label{g-thetat-0}
      g(t, \theta_{-t}\omega, x)\rightarrow 0,\ \ as\ t\rightarrow \infty.
   \end{align}

  \item[(ii)] Let $\sigma^2<2\alpha$.
  Then,
  there exist two random equilibria in $\mathbb{R}_+$,
  i.e., the zero point and
\begin{equation}\label{randomequilibrium}
  u_g(\omega)=(2\alpha\int_{-\infty}^0 \exp\{2((\alpha-\frac12\sigma^2) s +\sigma W_{s}(\omega))\}ds)^{-\frac12}.
\end{equation}
  Moreover, every positive pull-back trajectory for equation \eqref{sysg}
  converges exponentially to the random equilibrium $u_g(\omega)$,
  that is, there exists $\lambda>0$ such that  for  all $x>0$ and $\omega\in \Omega$,
      \begin{equation}\label{asy1d}
      \lim_{t\rightarrow +\infty}e^{\lambda t}|g(t, \theta_{-t}\omega, x)-u_g(\omega)|=0.
      \end{equation}
\end{itemize}
\end{lemma}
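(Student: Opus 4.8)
The plan is to derive both parts of the lemma from the explicit solution formula \eqref{solug} by computing the pull-back trajectory $g(t,\theta_{-t}\omega,x)$ and reading off its asymptotics. Using the Wiener shift identity $W_s(\theta_{-t}\omega)=W_{s-t}(\omega)-W_{-t}(\omega)$ (so that $W_t(\theta_{-t}\omega)=-W_{-t}(\omega)$) together with the substitution $r=s-t$ in the integral appearing in \eqref{solug}, I would reduce the pull-back trajectory to the single clean expression
\begin{equation}\label{pullback-g-plan}
g(t,\theta_{-t}\omega,x)=\frac{x}{\left(\exp\{-2(\alpha-\tfrac12\sigma^2)t+2\sigma W_{-t}(\omega)\}+2\alpha x^2\int_{-t}^0\exp\{2((\alpha-\tfrac12\sigma^2)r+\sigma W_r(\omega))\}\,dr\right)^{1/2}}.
\end{equation}
Writing $P^{-2}(t,\omega)$ for the first summand and $J(t,\omega)$ for the integral in the denominator, formula \eqref{pullback-g-plan} drives everything: the three regimes are distinguished solely by the sign of $\alpha-\tfrac12\sigma^2$, which is exactly the Lyapunov exponent of \eqref{sysg} at the origin.

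For part (ii), where $\sigma^2<2\alpha$ and hence $\alpha-\tfrac12\sigma^2>0$, I would invoke the law of the iterated logarithm to get $W_{-t}(\omega)=o(t)$, so that $P^{-2}(t,\omega)\to 0$; the same sublinearity forces the integrand of $J$ to decay exponentially as $r\to-\infty$, whence $J(t,\omega)\uparrow\int_{-\infty}^0\exp\{2((\alpha-\tfrac12\sigma^2)r+\sigma W_r)\}\,dr<\infty$ a.s. Passing to the limit in \eqref{pullback-g-plan} recovers precisely $u_g(\omega)$ from \eqref{randomequilibrium} and shows the limit is independent of $x>0$. To upgrade this to the exponential rate \eqref{asy1d}, I would estimate $|g(t,\theta_{-t}\omega,x)-u_g(\omega)|$ by a mean-value bound of the form $|a^{-1/2}-b^{-1/2}|\le\tfrac12(\min(a,b))^{-3/2}|a-b|$, reducing matters to controlling $P^{-2}(t,\omega)$ and the tail $\int_{-\infty}^{-t}$, both of which are bounded by $e^{-(2\alpha-\sigma^2-\epsilon)t}$ for large $t$; choosing any $\lambda\in(0,2\alpha-\sigma^2)$ then yields \eqref{asy1d}. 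That $0$ and $u_g$ are random equilibria and are the only two is the structural content cited from \cite{al} through the positivity of $\alpha-\tfrac12\sigma^2$; I would also confirm invariance of $u_g$ directly by substitution into the cocycle \eqref{solug}.

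For part (i), where $\sigma^2\ge 2\alpha$, the goal is to show the denominator of \eqref{pullback-g-plan} diverges, giving \eqref{g-thetat-0}, and that this global pull-back collapse to the origin forces $0$ to be the unique random equilibrium. When $\sigma^2>2\alpha$ this is immediate: $\alpha-\tfrac12\sigma^2<0$ makes $P^{-2}(t,\omega)\to\infty$, so $g(t,\theta_{-t}\omega,x)\to 0$ for every $x>0$. The main obstacle is the borderline case $\sigma^2=2\alpha$, where $\alpha-\tfrac12\sigma^2=0$ and $P^{-2}(t,\omega)=e^{2\sigma W_{-t}(\omega)}$ merely oscillates by the law of the iterated logarithm and need not tend to infinity. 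Here I would instead exploit the integral term: after the change of variables $r\mapsto -u$, $J(t,\omega)=\int_0^t e^{2\sigma W_{-u}(\omega)}\,du$ is the occupation integral of a two-sided Brownian motion, and by recurrence the occupation time of any bounded interval is a.s. infinite, so $J(t,\omega)\to\infty$ and again $g(t,\theta_{-t}\omega,x)\to 0$. This recurrence/occupation-time argument at the critical noise level $\sigma^2=2\alpha$ is the crux of the whole lemma; away from it, every limit reduces to the law of the iterated logarithm.
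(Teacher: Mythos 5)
Your proposal is correct, and it takes a genuinely different route from the paper: the paper gives no internal proof of this lemma at all, deferring entirely to Arnold \cite[Subsections 2.3.7, 9.3.2]{al} for the characterization of random equilibria of the stochastic logistic equation. You instead argue directly from the explicit solution \eqref{solug}. The Wiener-shift identity $W_s(\theta_{-t}\omega)=W_{s-t}(\omega)-W_{-t}(\omega)$ and the substitution $r=s-t$ do give exactly your pull-back representation
\begin{equation*}
g(t,\theta_{-t}\omega,x)
=x\Bigl(e^{-2(\alpha-\frac12\sigma^2)t+2\sigma W_{-t}(\omega)}
+2\alpha x^{2}\int_{-t}^{0}e^{2((\alpha-\frac12\sigma^2)r+\sigma W_r(\omega))}\,dr\Bigr)^{-1/2},
\end{equation*}
and from it everything follows as you say: in case (ii) the law of the iterated logarithm kills the first term, makes the integral converge to the one in \eqref{randomequilibrium} (so the limit is $u_g(\omega)$, independent of $x$), and your mean-value bound $|a^{-1/2}-b^{-1/2}|\le\frac12\min(a,b)^{-3/2}|a-b|$ delivers \eqref{asy1d} for any $\lambda\in(0,2\alpha-\sigma^2)$; in case (i) the strict regime $\sigma^2>2\alpha$ is immediate, and your occupation-time/recurrence argument ($\int_0^t e^{2\sigma W_{-u}}\,du\ge e^{-2\sigma}\cdot\mathrm{Leb}\{u\le t:\,|W_{-u}|\le 1\}\to\infty$ a.s.) is precisely the right tool for the borderline level $\sigma^2=2\alpha$, which is indeed the only delicate point and which the paper leaves implicit in the citation. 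What your route buys is a self-contained proof with an explicit decay rate and a transparent treatment of the critical noise intensity; what the citation buys is brevity and Arnold's general bifurcation framework.

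One step to tighten is the uniqueness of random equilibria, which you assert ("the pull-back collapse forces $0$ to be unique") or defer to \cite{al}. A random equilibrium $u$ satisfies $u(\omega)=g(t,\theta_{-t}\omega,u(\theta_{-t}\omega))$, where the initial point moves with $t$, so pointwise-in-$x$ pull-back convergence is not literally sufficient. Your formula repairs this in one line: dropping the exponential term in the bracket gives the $x$-uniform bound
\begin{equation*}
\sup_{x>0}\,g(t,\theta_{-t}\omega,x)\le\Bigl(2\alpha\int_{-t}^{0}e^{2((\alpha-\frac12\sigma^2)r+\sigma W_r(\omega))}\,dr\Bigr)^{-1/2},
\end{equation*}
and the integral diverges a.s.\ in both subcases of (i) (by the LIL when $\sigma^2>2\alpha$, by occupation time when $\sigma^2=2\alpha$), which settles uniqueness there; in case (ii) the same bound gives $u\le u_g$ a.s., and combining monotonicity of $x\mapsto g(t,\theta_{-t}\omega,x)$ with stationarity of the law of $u\circ\theta_{-t}$ and ergodicity of $\theta$ yields $u\equiv 0$ or $u\equiv u_g$. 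With this observation added, your argument is complete and fully independent of the external reference.
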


The next lemma describes the stationary measure and related density functions of Markov semigroup corresponding to SDE \eqref{sysg}. The results except \eqref{g1}
follow from \cite[Subsection 2.3.7, Subsection 9.3.2]{al}, 
and \eqref{g1} 
can be proved by \eqref{asy1d}.

\begin{lemma}\label{Lem-Erg-Stologequa}
Let $\sigma^2<2\alpha$ and
   \begin{equation}\label{1dstationary}
   \mu_g^{\sigma}:=\mathbb{P}\circ u_g^{-1}.
   \end{equation}
  Then, $\mu_g^\sigma$ is a stationary measure for the associated Markovian semigroup. Moreover, the associated the density function is
   \begin{equation}\label{den}
    p_{\alpha}^{\sigma}(x)=C_{\alpha}x^{\frac{2 \alpha}{\sigma^2}-2}\exp(-\frac{\alpha}{\sigma^2}x^2),\ \  x>0,
\end{equation}
  where $C_{\alpha}=2(\frac{\alpha}{\sigma^2})^{\frac{\alpha}{\sigma^2}-1}(\Gamma(\frac{\alpha}{\sigma^2}-\frac12))^{-1}$,
  and
  \begin{equation}\label{g1}
\displaystyle \lim_{t\rightarrow \infty} \mathbb{P} (g(t, \cdot, x)\in A)=\mu_g^{\sigma}(A),
\ \ \forall x>0,\ A \in \mathcal{B}(\mathbb{R}_+).
\end{equation}
\end{lemma}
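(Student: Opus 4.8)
The plan is to handle the three assertions separately: the stationarity of $\mu_g^\sigma$ and the explicit density $p_\alpha^\sigma$ are one-dimensional diffusion facts in the spirit of \cite{al}, whereas the forward convergence \eqref{g1} is the genuinely new point, to be drawn from the pull-back estimate \eqref{asy1d}. For the density I would treat \eqref{sysg} as a non-degenerate scalar diffusion on $(0,\infty)$ with drift $b(x)=\alpha x-\alpha x^3$ and diffusion coefficient $s(x)=\sigma x$; its stationary Fokker--Planck equation is solved, up to normalisation, by the classical scale/speed formula
\[ p(x)\ \propto\ \frac1{s^2(x)}\exp\Big(\int^x \frac{2b(y)}{s^2(y)}\,dy\Big). \]
Since $\frac{2b(y)}{s^2(y)}=\frac{2\alpha}{\sigma^2}\big(\tfrac1y-y\big)$, the integral equals $\frac{2\alpha}{\sigma^2}\log x-\frac{\alpha}{\sigma^2}x^2$, which reproduces the profile $x^{2\alpha/\sigma^2-2}\exp(-\tfrac{\alpha}{\sigma^2}x^2)$. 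The standing hypothesis $\sigma^2<2\alpha$ is exactly what makes the exponent $\tfrac{2\alpha}{\sigma^2}-2>-1$, hence $p$ integrable at the origin; normalising through the substitution $t=\tfrac{\alpha}{\sigma^2}x^2$ and the Gamma integral then yields the constant $C_\alpha$.

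To see that this density is carried by $\mu_g^\sigma=\mathbb P\circ u_g^{-1}$, I would use that $u_g$ is a random equilibrium. Since $u_g$ is measurable with respect to the past $\mathcal F_{-\infty}^0$, and hence independent of the increments driving the flow forward, and since the random-equilibrium identity $g(t,\omega,u_g(\omega))=u_g(\theta_t\omega)$ holds, for every bounded measurable $f$ one has
\[ \int P_t f\,d\mu_g^\sigma=\mathbb E\big[(P_tf)(u_g)\big]=\mathbb E\big[f\big(g(t,\cdot,u_g)\big)\big]=\mathbb E\big[f\big(u_g\circ\theta_t\big)\big]=\int f\,d\mu_g^\sigma, \]
where the middle equality is the independence (Fubini) step and the last uses $\theta_t$-invariance of $\mathbb P$; thus $\mu_g^\sigma$ is stationary. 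Uniqueness of the stationary probability measure of this positive-recurrent scalar diffusion then forces $\mu_g^\sigma(dx)=p_\alpha^\sigma(x)\,dx$.

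The convergence \eqref{g1} is where I would spend the real effort. The decisive point is that, because $(\theta_t)$ preserves $\mathbb P$, the forward variable $\omega\mapsto g(t,\omega,x)$ and the pull-back variable $\omega\mapsto g(t,\theta_{-t}\omega,x)$ share the same law, namely $P_t(x,\cdot)$. By \eqref{asy1d} the latter converges $\mathbb P$-a.s. to $u_g$, so $P_t(x,\cdot)$ converges weakly to $\mathrm{Law}(u_g)=\mu_g^\sigma$; testing against bounded continuous functions and invoking dominated convergence makes this rigorous, and already yields $\mathbb P(g(t,\cdot,x)\in A)\to\mu_g^\sigma(A)$ for every $A$ with $\mu_g^\sigma(\partial A)=0$. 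The main obstacle is upgrading this to \emph{all} Borel sets $A$, as \eqref{g1} asserts, since a.s.\ convergence by itself only controls $\mu_g^\sigma$-continuity sets. To remove the boundary condition I would exploit that both $P_t(x,\cdot)$ and $\mu_g^\sigma$ are absolutely continuous: the explicit solution \eqref{solug} provides the transition density of $g(t,\cdot,x)$, from which pointwise convergence to $p_\alpha^\sigma$ follows, and Scheff\'e's lemma then promotes this to total-variation convergence, which is precisely \eqref{g1}. Equivalently, the strong Feller property together with irreducibility of $(P_t)$ delivers the same total-variation convergence via Doob's theorem.
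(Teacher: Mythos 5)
Your proposal is correct, and it is in fact more careful than what the paper itself provides. The paper gives essentially no proof of this lemma: stationarity of $\mu_g^\sigma$ and the density \eqref{den} are imported wholesale from \cite[Subsections 2.3.7, 9.3.2]{al}, and \eqref{g1} is dispatched with the one-line remark that it ``can be proved by \eqref{asy1d}''. Your scale/speed (stationary Fokker--Planck) derivation of the density, and your stationarity argument combining $\mathcal{F}_-$-measurability of $u_g$ (hence independence from the forward noise), the equilibrium identity $g(t,\omega,u_g(\omega))=u_g(\theta_t\omega)$, and $\theta_t$-invariance of $\mathbb{P}$, reconstruct exactly the facts being cited; the latter is the same mechanism the paper later invokes via \cite[Corollary 1.3.22]{KS12} in the proof of Proposition \ref{thmes}(ii). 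Where you genuinely go beyond the paper is \eqref{g1}: as you correctly observe, equality in law of $g(t,\cdot,x)$ and $g(t,\theta_{-t}\cdot,x)$ together with the almost sure pull-back convergence \eqref{asy1d} only yields weak convergence of $P_t(x,\cdot)$ to $\mu_g^\sigma$, i.e.\ \eqref{g1} for $\mu_g^\sigma$-continuity sets, whereas the lemma asserts it for \emph{every} Borel set; the paper's one-line justification glosses over precisely this point. Your upgrade via Doob's theorem is legitimate and closes the gap: the diffusion \eqref{sysg} is non-degenerate on $(0,\infty)$ with smooth coefficients, hence strong Feller and irreducible there, and it possesses the invariant probability $\mu_g^\sigma$, so $P_t(x,A)\to\mu_g^\sigma(A)$ for all $x>0$ and all Borel $A$ (this is \cite[Theorem 4.2.1]{prato}, which the paper's own appendix quotes in a different context). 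Two caveats. First, your Scheff\'e alternative is less immediate than you suggest: \eqref{solug} gives the random variable, not a closed-form transition density, and extracting the density pointwise leads into exponential functionals of Brownian motion; the Doob (or classical positive-recurrent one-dimensional diffusion) route is the robust one. Second, carrying out your normalising substitution $t=\frac{\alpha}{\sigma^2}x^2$ honestly gives $C_\alpha=2\left(\frac{\alpha}{\sigma^2}\right)^{\frac{\alpha}{\sigma^2}-\frac12}\left(\Gamma\left(\frac{\alpha}{\sigma^2}-\frac12\right)\right)^{-1}$, with exponent $\frac{\alpha}{\sigma^2}-\frac12$ rather than the paper's $\frac{\alpha}{\sigma^2}-1$; this exposes a typo in the stated constant, not an error in your approach. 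Finally, note that in the paper's actual applications of \eqref{g1} (e.g.\ in \eqref{es1}) only intervals occur, for which your weak-convergence step already suffices since $\mu_g^\sigma$ is absolutely continuous, so the full Borel-set strength you establish is a strengthening of what is really needed downstream.
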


\begin{lemma}\label{time1}
Let $\sigma^2<2\alpha$.
Then there exists a $\theta$-invariant set $\Omega^*$ of
full measure such that for all $x>0$ and $\omega\in \Omega^*$,
\begin{align}
\displaystyle \lim_{t\rightarrow \infty} \frac1t\int_0^tg^2(s,\omega,x)ds
  =\frac{1}{\alpha}(\alpha-\frac12 \sigma^2),  \label{b2.1} \\
\displaystyle \lim_{t\rightarrow \infty} \frac1t\int_0^tg^2(s,\theta_{-t}\omega,x)ds
  =\frac{1}{\alpha}(\alpha-\frac12 \sigma^2).    \label{b2.2}
\end{align}
In particular,
\begin{align} \label{int-g2-infty}
    \displaystyle \lim_{t\rightarrow \infty}
   \int_0^tg^2(s,\omega,x)ds
   =  \displaystyle \lim_{t\rightarrow \infty}
   \int_0^tg^2(s,\theta_{-t}\omega,x)ds
   =\infty.
\end{align}
\end{lemma}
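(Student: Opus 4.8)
The plan is to base everything on a single pathwise identity obtained by applying It\^o's formula to $\log g$. Since $g$ solves \eqref{sysg}, we get $d\log g = (\alpha-\frac12\sigma^2-\alpha g^2)\,dt+\sigma\,dW_t$, which integrates to
\begin{equation*}
\alpha\int_0^t g^2(s,\omega,x)\,ds=\Big(\alpha-\frac12\sigma^2\Big)t+\sigma W_t(\omega)+\log x-\log g(t,\omega,x).
\end{equation*}
This exact identity holds pathwise for a.e.\ $\omega$ and every $x>0$, and it reduces both \eqref{b2.1} and \eqref{b2.2} to controlling the right-hand side after division by $t$. Writing $c:=\alpha-\frac12\sigma^2$, the hypothesis $\sigma^2<2\alpha$ gives $c>0$, which is exactly what makes the target value $\frac1\alpha(\alpha-\frac12\sigma^2)=c/\alpha$ positive and hence also yields \eqref{int-g2-infty}. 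On a full-measure set one has $W_t(\omega)/t\to0$ and $\log x/t\to0$, so the only genuinely nontrivial term in each case is $\frac1t\log g(t,\cdot,x)$.

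For \eqref{b2.1} (forward averages) I would show $\frac1t\log g(t,\omega,x)\to0$ directly from the explicit solution \eqref{solug}. Setting $\xi(s):=cs+\sigma W_s(\omega)$, the strong law for Brownian motion gives $\xi(t)/t\to c>0$ and $\sup_{[0,t]}|W_s|/t\to0$ almost surely. The key step is the Laplace/Watson-type estimate $\frac1t\log\int_0^t e^{2\xi(s)}\,ds\to2c$: the upper bound follows from $\int_0^t e^{2\xi}\le t\,e^{2\sup_{[0,t]}\xi}$ together with $\frac1t\sup_{[0,t]}\xi\to c$, and the lower bound from $\int_0^t e^{2\xi}\ge e^{2\inf_{[t-1,t]}\xi}$ with $\frac1t\inf_{[t-1,t]}\xi\to c$. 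Taking $\log$ in \eqref{solug} then yields $\frac1t\log g(t,\omega,x)=\frac{\log x}{t}+\frac{\xi(t)}{t}-\frac{1}{2t}\log\big(1+2\alpha x^2\int_0^t e^{2\xi}\,ds\big)\to c-\frac12\cdot2c=0$, and the identity gives \eqref{b2.1}. I expect this asymptotic analysis of $\int_0^t e^{2\xi(s)}\,ds$ to be the main obstacle, precisely because forward trajectories $g(t,\omega,x)$ do not converge to a fixed limit, so the cancellation $c-c=0$ has to be extracted quantitatively rather than from a limiting equilibrium.

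For \eqref{b2.2} (pull-back averages) the situation is actually cleaner: I would apply the same identity with $\omega$ replaced by $\theta_{-t}\omega$ and terminal time $t$. The Brownian helix property gives $W_t(\theta_{-t}\omega)=-W_{-t}(\omega)$, and $W_{-t}(\omega)/t\to0$ on a full-measure set. Crucially, the term $\log g(t,\theta_{-t}\omega,x)$ is now controlled by Lemma \ref{lem1}(ii): the pull-back trajectory converges to the (finite, strictly positive) random equilibrium $u_g(\omega)$, so $\log g(t,\theta_{-t}\omega,x)\to\log u_g(\omega)$ and hence $\frac1t\log g(t,\theta_{-t}\omega,x)\to0$. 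Dividing by $t$ then gives $\frac{\alpha}{t}\int_0^t g^2(s,\theta_{-t}\omega,x)\,ds\to c$, which is \eqref{b2.2}.

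Finally, \eqref{int-g2-infty} is immediate since both normalized averages tend to the positive constant $\frac1\alpha(\alpha-\frac12\sigma^2)$. For the single $\theta$-invariant full-measure set $\Omega^*$, I would note that all the random inputs used above, namely $W_t/t\to0$, $W_{-t}/t\to0$, $\sup_{[0,t]}|W_s|/t\to0$, $\xi(t)/t\to c$, and the convergence $g(t,\theta_{-t}\cdot,x)\to u_g$, are $x$-independent (the $\log x$ contributions are killed by the division by $t$), so their common full-measure set can be taken $\theta$-invariant by replacing it with $\bigcap_{s\in\mathbb{Q}}\theta_s\Omega^*$; the resulting conclusions then hold simultaneously for all $x>0$ and all $\omega\in\Omega^*$.
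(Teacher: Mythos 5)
Your proposal is correct in substance and, at its computational core, coincides with the paper's proof: if you substitute the explicit formula \eqref{solug} into your It\^o identity, the $\xi(t)$ terms cancel and you recover exactly the identity the paper obtains by integrating directly, namely $\alpha\int_0^t g^2(s,\omega,x)\,ds=\frac12\log\bigl(1+2\alpha x^2\int_0^t e^{2\xi(s)}\,ds\bigr)$ with $\xi(s)=(\alpha-\frac12\sigma^2)s+\sigma W_s(\omega)$, so both proofs reduce \eqref{b2.1} to the Laplace-type asymptotics $\frac1t\log\int_0^t e^{2\xi(s)}\,ds\to 2(\alpha-\frac12\sigma^2)$. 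There are two genuine differences. First, the paper simply asserts this limit (its fourth displayed equality), whereas your two-sided $\sup$/$\inf$ bound actually proves it; that is a gain in rigor. Second, for the pull-back limit \eqref{b2.2} the paper repeats the same computation with the shifted path $W_\cdot(\theta_{-t}\omega)$ and again asserts the limit (now with a $t$-dependent shift, which is more delicate), while you instead invoke Lemma \ref{lem1}(ii): since $g(t,\theta_{-t}\omega,x)\to u_g(\omega)\in(0,\infty)$, the term $\frac1t\log g(t,\theta_{-t}\omega,x)$ vanishes. This is cleaner, and it is in fact equivalent to what the paper's assertion implicitly needs, namely the convergence of $\int_{-t}^0 e^{2(\,(\alpha-\frac12\sigma^2)u+\sigma W_u(\omega))}\,du$ to the finite positive constant $(2\alpha\, u_g^2(\omega))^{-1}$.

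Two defects should be repaired, both local. (1) Your construction of the invariant set fails as stated: $\bigcap_{s\in\mathbb{Q}}\theta_s\Omega^*$ satisfies $\theta_r\bigl(\bigcap_{s\in\mathbb{Q}}\theta_s\Omega^*\bigr)=\bigcap_{s\in\mathbb{Q}}\theta_{r+s}\Omega^*$, an intersection over the coset $r+\mathbb{Q}$, so it is invariant only under rational shifts, not under the full group $(\theta_t)_{t\in\mathbb{R}}$ that the lemma requires. The fix is the paper's choice: take $\Omega^*=\{\omega:\lim_{|t|\to\infty}W_t(\omega)/t=0\}$, which has full measure and is directly $\theta$-invariant because $W_t(\theta_s\omega)/t=(W_{t+s}(\omega)-W_s(\omega))/t\to0$; every input you use ($\sup_{[0,t]}|W_s|/t\to0$, $\xi(t)/t\to\alpha-\frac12\sigma^2$, $W_{-t}/t\to0$, and even the pull-back convergence $g(t,\theta_{-t}\omega,x)\to u_g(\omega)$, which can be read off \eqref{solug} after the change of variables $r=s-t$) is a pathwise consequence of membership in this single set, so no further intersection is needed. (2) You derive the basic identity by It\^o's formula, which yields it only on a full-measure set of paths, and then apply it at $\theta_{-t}\omega$ for every $t>0$; a priori the exceptional null set need not be $\theta$-invariant, so this application is not justified as written. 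The clean fix is to note the identity is pathwise: differentiating $\frac12\log\bigl(1+2\alpha x^2\int_0^t e^{2\xi(s)}\,ds\bigr)$ in $t$ and comparing with \eqref{solug} shows it holds for every continuous path, not merely almost every one. With these two repairs your argument is complete.
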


\begin{proof}[Proof of Lemma \ref{time1}]
Let $\Omega^*:= \{\omega\in \Omega: \lim_{\mid t \mid\rightarrow \infty} {W_t(\omega)}/t=0\}$.
Then, by the iterated logarithmic law of Brownian motion (cf. \cite{kis}),
$\mathbb{P}(\Omega^*)=1$.
Moreover, for any $\omega\in \Omega^*$ and $s\in \mathbb{R}$,
$\lim_{t \rightarrow \infty}  {W_t(\theta_s\omega)}/t
 =\lim_{t \rightarrow \infty}  ({W_{t+s}(\omega)-W_s(\omega)})/t=0,$
which yields that $\theta_s \Omega^* \subseteq \Omega^*$,
and so $\Omega^*$ is a $\theta$-invariant set.

Now, for any $x>0$ and $\omega\in \Omega^*$,
using \eqref{solug} we compute
\begin{align*}
&\displaystyle \lim_{t\rightarrow \infty}\frac1t\int_0^tg^2(s, \omega, x)ds  \\
=& \displaystyle \lim_{t\rightarrow \infty}\frac1t\int_0^t \frac{x^2\exp{2\{(\alpha-\frac12\sigma^2) s
+\sigma W_s(\omega)\}}}{1+2\alpha x^2\int_0^s\exp\{2((\alpha-\frac12\sigma^2) r +\sigma W_{r}(\omega))\}dr}ds\\
=& \displaystyle \lim_{t\rightarrow \infty} \frac1{2\alpha t}
    \ln\big(1+2\alpha x^2\int_0^t\exp\{2((\alpha-\frac12\sigma^2)r +\sigma W_{r}(\omega))\}dr \big)\\
=& \displaystyle \lim_{t\rightarrow \infty}\frac1{2\alpha t}
     \ln\int_0^t\exp(2(\alpha-\frac12\sigma^2) r+2\sigma W_{r}(\omega))dr\\
=& \frac{1}{\alpha}(\alpha-\frac12 \sigma^2),
\end{align*}
which yields \eqref{b2.1}.
Similarly, one has
\begin{align*}
 & \displaystyle \lim_{t\rightarrow \infty}\frac1t\int_0^tg^2(s, \theta_{-t}(\omega), x)ds  \\
=& \displaystyle \lim_{t\rightarrow \infty}\frac1t\int_0^t
       \frac{x^2\exp{2\{(\alpha-\frac12\sigma^2) s+\sigma W_s(\theta_{-t}(\omega))\}}}
       {1+2\alpha x^2\int_0^s\exp\{2((\alpha-\frac12\sigma^2)r+\sigma W_{r}(\theta_{-t}(\omega)))\}dr }ds\\
=& \displaystyle \lim_{t\rightarrow \infty}\frac1{2\alpha t}
     \ln\big( 1+2\alpha x^2\int_0^t\exp2\{(\alpha-\frac12\sigma^2) r +\sigma W_{r}(\theta_{-t}(\omega))\}dr \big) \\
=& \displaystyle \lim_{t\rightarrow \infty}\frac1{2\alpha t}\ln\int_0^t\exp(2(\alpha-\frac12\sigma^2) r+2\sigma W_r(\theta_{-t}(\omega)))dr\\
=& \frac{1}{\alpha}(\alpha-\frac12 \sigma^2).
\end{align*}
Thus, \eqref{b2.2} is verified.
\end{proof}

\section{Pull-back $\Omega$-limit sets} \label{pull-back limit set}
Since this section,
we start to study the pull-back $\Omega$-limit sets for the stochastic Kolmogorov system \eqref{sys5} in $\mathbb{R}^3_+$.

The main result of this section is stated in Theorem \ref{attract} below,
which describes the pull-back $\Omega$-limit sets for system \eqref{sys5}.

\begin{theorem}\label{attract} (Pull-back $\Omega$-limit sets)
For every $x\in \mathbb{R}^3_+$ and almost surely $\omega\in \Omega$,
the following holds:
\begin{itemize}
\item [(i)]
  If $\sigma^2\ge 2\alpha$,
  then for any $x\in \mathbb{R}_+^3$,
  $\Omega_x(\omega)=\{O\}$,
  i.e., the origin $O$ is the unique global attractor under pull-back sense in $\mathbb{R}_+^3$.

  \item [(ii)]
  If $ \sigma^2<2\alpha$, then
  for any $x\in \mathbb{R}_+^3$,
  $\Omega_x(\omega)=u_g(\omega)\omega_d(x)$,
  where $u_g$ is the random equilibrium given by \eqref{omega}
  for the logistic-type equation \eqref{sysg},
  and
  $\omega_d(x)$ is the $\omega$-limit set
  given by \eqref{omega} for system \eqref{sys6}.
    More precisely,  we have
    \begin{align}  \label{Omegax-ugQ}
         \Omega_x(\omega) = \{u_g(\omega) Q\}
   \end{align}
  if $x$ lies in the attracting domain of some equilibria $Q$,
  and
   \begin{align} \label{Omegax-ugFh}
          \Omega_x(\omega) = u_g(\omega) \Gamma(h)
   \end{align}
   if $x$ lies in the attracting domain of some non-trivial periodic orbit $\Gamma(h)$.
\end{itemize}
\end{theorem}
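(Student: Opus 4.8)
The plan is to transport the pull-back dynamics of $\Phi$ onto the deterministic flow $\Psi$ via the decomposition formula \eqref{sdf}, and then read off the limit set from the scalar behaviour in \eqref{sysg} and the classification of $\omega_d(x)$ in Theorem \ref{gd1}. Fixing the initial datum $g_0=1$ in \eqref{sdf} and replacing $(\omega,t)$ by $(\theta_{-\tau}\omega,\tau)$, I would write
\[
\Phi(\tau,\theta_{-\tau}\omega,x)=g(\tau,\theta_{-\tau}\omega,1)\,\Psi\big(T(\tau),x\big),\qquad T(\tau):=\int_0^\tau g^2(s,\theta_{-\tau}\omega,1)\,ds,
\]
so the problem splits into the scalar factor $g(\tau,\theta_{-\tau}\omega,1)$ and the reparametrised trajectory $\Psi(T(\tau),x)$. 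The choice $g_0=1$ is convenient: it removes any rescaling of the initial point and lets me invoke Theorem \ref{gd1} directly for the genuine argument $x$. Since, by Corollary \ref{decomposition}, every $x\in\mathbb{R}^3_+$ lies in the attracting domain of either an equilibrium or a periodic orbit, the two regimes of the theorem exhaust all cases and together deliver $\Omega_x(\omega)=u_g(\omega)\omega_d(x)$.

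For part (i), when $\sigma^2\ge 2\alpha$, Lemma \ref{lem1}(i) gives $g(\tau,\theta_{-\tau}\omega,1)\to 0$ as $\tau\to\infty$, while dissipativity (Lemma \ref{dglobal}) bounds $\|\Psi(T(\tau),x)\|\le\max(\|x\|,1)$ uniformly in $\tau$. Hence the product tends to $0$ regardless of the behaviour of $T(\tau)$, and $\Omega_x(\omega)=\{O\}$.

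For part (ii), when $\sigma^2<2\alpha$, Lemma \ref{lem1}(ii) yields $g(\tau,\theta_{-\tau}\omega,1)\to u_g(\omega)$, a nonzero quantity independent of $\tau$, and \eqref{int-g2-infty} in Lemma \ref{time1} gives $T(\tau)\to\infty$. I would first record that $\tau\mapsto T(\tau)$ is continuous, by continuity of the Brownian path and of the explicit solution \eqref{solug}, so its image over $[\tau_0,\infty)$ is an interval unbounded above, hence contains some $[a,\infty)$. If $x$ lies in the attracting domain of an equilibrium $Q$, then $\omega_d(x)=\{Q\}$ and $\Psi(T(\tau),x)\to Q$ since $T(\tau)\to\infty$; multiplying by the limit $u_g(\omega)$ gives $\Omega_x(\omega)=\{u_g(\omega)Q\}$, which is \eqref{Omegax-ugQ}. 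If instead $x$ lies in the attracting domain of a periodic orbit $\Gamma(h)$, I would establish \eqref{Omegax-ugFh} by a two-sided inclusion. For $\subseteq$, the accumulation points of $\Psi(T(\tau),x)$ lie in $\omega_d(x)=\Gamma(h)$ because $T(\tau)\to\infty$; for $\supseteq$, given $p\in\Gamma(h)$ there is $T_k\to\infty$ with $\Psi(T_k,x)\to p$ (the trajectory spirals onto and winds around the cycle, so every point of $\Gamma(h)$ is an $\omega$-limit point), and since $T_k$ eventually lies in $[a,\infty)\subseteq\mathrm{Im}(T)$ one finds $\tau_k\to\infty$ with $T(\tau_k)=T_k$. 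Because $u_g(\omega)\neq 0$, the accumulation points of the product $g\cdot\Psi$ are exactly $u_g(\omega)$ times those of $\Psi(T(\tau),x)$, which yields $\Omega_x(\omega)=u_g(\omega)\Gamma(h)$.

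The main obstacle is precisely the periodic-orbit case in (ii): the reparametrisation $T(\tau)$ depends on $\tau$ both through its upper limit and through the shift $\theta_{-\tau}\omega$, so it need not be monotone, and one cannot simply pass to a single limit. The decisive structural facts that make the covering argument succeed are the continuity of $T$ together with $T(\tau)\to\infty$, which force its image to be an interval reaching $+\infty$, and the fact (from Theorem \ref{gd1}(i)) that the deterministic $\omega$-limit set of a point attracted to $\Gamma(h)$ is the \emph{entire} cycle. These replace monotonicity and guarantee that the full random cycle $u_g(\omega)\Gamma(h)$ is recovered rather than a proper subset of it.
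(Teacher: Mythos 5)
Your proof is correct and takes essentially the same route as the paper's: the decomposition formula \eqref{sdf} with $g_0=1$, the decay \eqref{g-thetat-0} plus dissipativity (Lemma \ref{dglobal}) for part (i), and for part (ii) the split via Corollary \ref{decomposition}, the convergence \eqref{asy1d} of $g(t,\theta_{-t}\omega,1)$ to $u_g(\omega)$, the divergence \eqref{int-g2-infty} of the time change, and a two-sided inclusion in the periodic-orbit case. Your continuity/intermediate-value argument for the time change $T(\tau)$ is a welcome elaboration of the step the paper states tersely when it asserts the existence of the sequence $\{t_n\}$ in \eqref{pull1}, but it does not change the structure of the argument.
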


\begin{proof}
$(i)$ For $x\in \mathbb{R}^3_+\setminus \{O\}$,
by the stochastic decomposition formula \eqref{sdf},
\begin{equation}\label{sss}
\Phi(t, \theta_{-t}\omega, x)= g(t, \theta_{-t}\omega, 1)\Psi(\int_0^tg^2(s, \theta_{-t}\omega, 1)ds,x).
\end{equation}

Note that for $t$ sufficiently large,
$\|\Psi(\int_0^t g^2(s, \theta_{-t}\omega, 1)ds, x)\|$ is uniformly bounded,
that is, there exists $M\in (0,\infty)$
such that \\
 $$\|\Psi(\int_0^t g^2(s, \theta_{-t}\omega, 1)ds, x)\|\leq M,\, \,  \textrm{for}\,\, t\,\, \textrm{large}\,\, \textrm{enough}.$$
Actually, this is clear if
$\int_0^tg^2(s, \theta_{-t}\omega,1)ds$ is uniformly bounded in $t$, because $\Psi(t, x)$ is continuous in $t$.
Otherwise,
in view of Lemma \ref{dglobal},
$\Psi(\cdot, x)$ is close to $\mathbb{S}_+^2$
for $t$ large enough,
which also implies the uniform boundedness.

Thus, taking into account the decay  \eqref{g-thetat-0} and \eqref{sss} we obtain
$$\displaystyle \lim_{t\rightarrow \infty}\Phi(t, \theta_{-t}\omega, x)=O,\ \  a.s. $$

$(ii)$ We first infer from Corollary \ref{decomposition} that
$$x\in \mathcal{A}(Q)\,\, \textrm{for}\,\, \textrm{some} \,\, Q\in \mathcal{E},\,\, \textrm{or}\,\, x\in \mathcal{A}(\Gamma(h))\,\,
\textrm{for} \,\, \textrm{some}\,\, h\in ({h}^*, \infty).$$

In the first case where $x\in \mathcal{A}(Q)$,
we have $\Psi(t, x)\rightarrow Q$ as $t\rightarrow\infty$,
and so $\omega_d(x)=\{Q\}$. But by \eqref{int-g2-infty}
\begin{equation*}\label{timeinfinity}
\int_0^tg^2(s, \theta_{-t}\omega, 1)ds\rightarrow \infty,\ \ \textrm{as}\,\, t\rightarrow\infty,
\end{equation*}
which implies that
$$\Psi(\int_0^tg^2(s, \theta_{-t}\omega, 1)ds,x)\rightarrow Q,\ \  \textrm{as}\,\, t\rightarrow\infty.$$
Since $g(t, \theta_{-t}\omega, 1)\rightarrow u_g(\omega)$
due to  \eqref{asy1d},
we infer from \eqref{sss} that
$$\Phi(t, \theta_{-t}\omega, x)\rightarrow u_g(\omega)Q,$$
and so \eqref{Omegax-ugQ} follows.

In the second case where $x\in \mathcal{A}(\Gamma(h))$,
we have $u_g(\omega)\Gamma(h)\subseteq \Omega_x(\omega)$.
Actually, for any $y\in \Gamma(h)$,
by \eqref{int-g2-infty}, there exists a sequence $\{t_n\}$ such that
\begin{equation}\label{pull1}
\displaystyle \lim_{n\rightarrow \infty}\Psi(\int_0^{t_n}g^2(s, \theta_{-t_n}\omega, 1)ds, x)=y.
\end{equation}
Then, by \eqref{sdf} and \eqref{asy1d},
 \begin{equation}\label{4.2}
\displaystyle \lim_{n\rightarrow \infty}\Phi(t_n, \theta_{-t_n}\omega, x)=u_g(\omega) y,
\end{equation}
which yields that $u_g(\omega) y \in \Omega_x(\omega)$
for $y\in \Gamma(h)$,
and so $u_g(\omega)\Gamma(h)\subseteq \Omega_x(\omega)$.

Regarding the inverse conclusion $\Omega_x(\omega)\subseteq u_g(\omega)\Gamma(h)$,
for any $z\in \Omega_x(\omega)$,
we infer that there exists a sequence $\{t_n\}$ such that
$$\displaystyle \lim_{n\rightarrow \infty}\Phi(t_n, \theta_{-t_n}\omega, x)=z.$$
Since $x\in \mathcal{A}(\Gamma(h))$,
by \eqref{int-g2-infty},
the $\omega$-limit set of $\{\Psi(\int_0^{t_n}g^2(s,\theta_{-t_n}\omega, 1)ds, x)\}$ is contained in $\Gamma(h)$.
Using the stochastic decomposition formula \eqref{sdf}  again  and \eqref{asy1d} we obtain
$z\in u_g(\omega)\Gamma(h)$ for any $z\in \Omega_x(\omega)$,
and so $\Omega_x(\omega)\subseteq u_g(\omega)\Gamma(h)$.
Therefore, we conclude that
\eqref{Omegax-ugFh} holds and finish the proof.
\end{proof}

\section{Ergodic stationary measures} \label{Sec-Ergo-Meas}

This section studies ergodic stationary measures
of the Markov semigroup $(P_t)_{t\geq 0}$
corresponding to \eqref{sys5} in $\mathbb{R}^3_+$.
There are two types of ergodic stationary measures,
related to equilibria
and invariant cones,
which are studied in Subsections \ref{Subsec-Rel-Equi}
and \ref{Subsec-Rel-Cone}, respectively.
Then,
the relationship, via the vanishing noise limit,
between ergodic stationary measures of stochastic system \eqref{sys5}
and invariant measures of deterministic system \eqref{sys6}
is proved in Subsection \ref{Subsec-Vanish}.

\subsection{Relation to equilibria} \label{Subsec-Rel-Equi}
We first consider the stationary measures related to
equilibria.
Recall that
$\mathcal{L}(y):=\{\lambda y: \lambda>0\}$ denotes the ray passing through the point $y$,
where $y\in \mathbb{R}_+^3$.

\begin{proposition} \label{thmes} (Ergodic stationary measures related to equilibria).
Let $\sigma^2<2\alpha$, $Q\in \mathcal{E}\backslash \{O\}$ any equilibrium of \eqref{sys6}
and $u_g$ the random equilibrium to \eqref{sysg}. Then the following holds:

(i) $Q_g(\cdot):=u_g(\cdot)Q$ is a stationary solution to \eqref{sys5}
and is supported on  $\overline{\mathcal{L}(Q)}$.

(ii) Its probability law $\mu_Q:=\mathbb{P}\circ Q_g^{-1}$
is a stationary measure of the
Markov semigroup $(P_t)$
corresponding to \eqref{sys5}.

$(iii)$  $\mu_Q$ is strongly mixing on $\mathbb{R}_+^3$.
In particular,
$\mu_Q$ is ergodic on $\mathbb{R}_+^3$,
i.e., $\mu_Q \in \mathcal{P}_e(\mathbb{R}_+^3)$.
\end{proposition}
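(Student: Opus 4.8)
The plan is to reduce the three-dimensional problem to the one-dimensional logistic equation \eqref{sysg} via the stochastic decomposition formula \eqref{sdf}, exploiting the fact that $Q$ is an equilibrium so that the deterministic flow is frozen on the ray through $Q$. Once the dynamics on $\overline{\mathcal{L}(Q)}$ is identified with the logistic flow, all three assertions are inherited from the one-dimensional results in Lemmas \ref{lem1} and \ref{Lem-Erg-Stologequa}.

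For part $(i)$, I would apply \eqref{sdf} with initial value $x=u_g(\omega)Q$ and $g_0=u_g(\omega)$. Since $Q\in\mathcal{E}$ gives $\Psi(s,Q)=Q$ for all $s\ge0$, the deterministic factor is constant and the random time change is irrelevant, so
$$\Phi(t,\omega,u_g(\omega)Q)=g(t,\omega,u_g(\omega))\,\Psi\!\left(\int_0^t g^2(s,\omega,u_g(\omega))\,ds,\,Q\right)=g(t,\omega,u_g(\omega))\,Q.$$
Because $u_g$ is the random equilibrium of \eqref{sysg} (Lemma \ref{lem1}), its defining identity $g(t,\omega,u_g(\omega))=u_g(\theta_t\omega)$ turns the right-hand side into $u_g(\theta_t\omega)Q=Q_g(\theta_t\omega)$, so $Q_g$ is a stationary solution. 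The support claim is immediate: $u_g(\omega)>0$ a.s., hence $Q_g(\omega)=u_g(\omega)Q\in\mathcal{L}(Q)$, and since the law $\mu_g^\sigma$ of $u_g$ has density \eqref{den} positive on all of $(0,\infty)$, the support of the law of $Q_g$ is exactly $\overline{\mathcal{L}(Q)}$.

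For part $(ii)$, the display above shows that under the embedding $\iota_Q\colon\lambda\mapsto\lambda Q$ the semigroup $(P_t)$ acting on functions on the ray coincides with the Markov semigroup $(P_t^g)$ of \eqref{sysg}: for $f\in\mathcal{C}_b(\mathbb{R}_+^3)$,
$$(P_t f)(\lambda Q)=\mathbb{E}\big[f(g(t,\cdot,\lambda)Q)\big]=\big(P_t^g(f\circ\iota_Q)\big)(\lambda).$$
Since $\mu_Q=(\iota_Q)_\ast\mu_g^\sigma$ and $\mu_g^\sigma$ is stationary for $(P_t^g)$ by Lemma \ref{Lem-Erg-Stologequa}, integrating this identity against $\mu_g^\sigma$ and using $\int P_t^g\varphi\,d\mu_g^\sigma=\int\varphi\,d\mu_g^\sigma$ yields $\int P_t f\,d\mu_Q=\int f\,d\mu_Q$, i.e. $\mu_Q$ is stationary for $(P_t)$. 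For part $(iii)$ I would read \eqref{g1} as $g(t,\cdot,\lambda)\Rightarrow\mu_g^\sigma$ for every $\lambda>0$, so that $(P_t^g\varphi)(\lambda)\to\int\varphi\,d\mu_g^\sigma$ for all $\varphi\in\mathcal{C}_b(\mathbb{R}_+)$. Transferring through $\iota_Q$ gives $(P_t f)(\lambda Q)\to\int f\,d\mu_Q$ for every $\lambda>0$, hence $\mu_Q$-a.e.; then for $f,h\in\mathcal{C}_b(\mathbb{R}_+^3)$ the bound $\|P_t f\|_\infty\le\|f\|_\infty$ and dominated convergence give
$$\lim_{t\to\infty}\int h\,(P_t f)\,d\mu_Q=\left(\int f\,d\mu_Q\right)\left(\int h\,d\mu_Q\right),$$
which is strong mixing; ergodicity on $\mathbb{R}_+^3$ follows since mixing implies ergodicity.

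The main step on which everything rests is the reduction to the one-dimensional equation: the decomposition formula collapses the dynamics on $\overline{\mathcal{L}(Q)}$ to the logistic flow precisely because $\Psi(\cdot,Q)=Q$ kills the deterministic time change, and from there stationarity, invariance of $\mu_Q$, and mixing are all consequences of the corresponding one-dimensional facts. The only points requiring care are the verification of the random-equilibrium identity $g(t,\omega,u_g(\omega))=u_g(\theta_t\omega)$ and the routine check that the conjugacy $\iota_Q$ preserves measurability and boundedness, so that the invariance computation and the dominated-convergence argument apply verbatim.
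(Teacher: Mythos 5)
Your proof is correct, and part (i) is essentially identical to the paper's argument, but in parts (ii) and (iii) you take a genuinely different route. For stationarity, the paper does not use your conjugacy with the scalar semigroup: it observes that $Q_g$ is an $\mathcal{F}_-$-measurable random equilibrium, so its law is automatically stationary by an external general result (Corollary 1.3.22 of [KS12]); you instead push the stationarity of $\mu_g^{\sigma}$ from Lemma \ref{Lem-Erg-Stologequa} through the embedding $\iota_Q$, which is more self-contained but exploits the special fact that the dynamics on $\mathcal{L}(Q)$ collapses to the logistic flow. For strong mixing, the paper stays in three dimensions: it uses $\theta$-invariance to rewrite $P(t,x,\cdot)$ in pull-back form, invokes the pull-back convergence of Theorem \ref{attract} together with dominated convergence to get $P(t,x,\cdot)\rightharpoonup \mu_Q$ for every $x\in \mathcal{L}(Q)$, applies Theorem \ref{prato} on the state space $\mathcal{L}(Q)$ (after establishing $\mu_Q(\mathcal{L}(Q))=1=P(t,x,\mathcal{L}(Q))$), and only then extends to $\mathbb{R}^3_+$; you obtain the same pointwise convergence $(P_tf)(\lambda Q)\to \int f\,d\mu_Q$ from the one-dimensional convergence in law \eqref{g1} via the conjugacy and conclude directly on $\mathbb{R}^3_+$ by dominating against $\mu_Q$. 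Both are legitimate: your reduction keeps the entire proof at the level of the scalar equation and avoids Theorem \ref{attract} and the [KS12] citation, while the paper's $\mathcal{F}_-$-measurability and pull-back machinery is not tied to rays and is reused later (e.g. for Proposition \ref{uniqueness} and Theorem \ref{main-thm-2}). One point you should make explicit: your final display is the bilinear form of mixing tested against $f,h\in \mathcal{C}_b$, whereas Theorem \ref{prato} characterizes strong mixing by $\|P_t\varphi-\langle \varphi,1\rangle\|_{L^2(\mu_Q)}\to 0$ for all $\varphi\in L^2(\mathbb{R}^3_+,\mu_Q)$; this follows from what you have, since the $\mu_Q$-a.e. pointwise convergence and the bound $\|P_tf\|_\infty\le \|f\|_\infty$ give $L^2(\mu_Q)$-norm convergence for $f\in\mathcal{C}_b$ by dominated convergence, and the density of $\mathcal{C}_b$ in $L^2(\mu_Q)$ plus the $L^2(\mu_Q)$-contractivity of $P_t$ (a consequence of stationarity) extend it to all of $L^2$ -- a routine step, but it is precisely what the paper's appeal to Theorem \ref{prato} encapsulates.
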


\begin{proof}
$(i)$ Since $u_g$ is the random equilibrium to
the stochastic logistic equation \eqref{sysg},
one has $u_g(\theta_t\omega)=g(t, \omega, u_g(\omega))$.
Moreover, as $Q$ is an equilibrium of deterministic Kolmogorov system \eqref{sys6},
$\Psi(t, Q)=Q$, $\forall\,\, t\geq 0$. Then, by \eqref{sdf},
\begin{equation*}
\begin{aligned}
\Phi(t, \omega, Q_g(\omega))
&=g(t, \omega, u_g(\omega))\Psi(\int_0^tg^2(s, \omega, u_g(\omega))ds, Q)  \\
&=u_g(\theta_t\omega)\Psi(\int_0^tu_g^2(\theta_s\omega)ds, Q)
 =u_g(\theta_t\omega)Q
 =Q_g(\theta_t\omega),
\end{aligned}
\end{equation*}
which verifies that $Q_g$ is a random equilibrium to system \eqref{sys5}. Since $u_g\geq0$, it is clear that $Q_g$ is supported on $\overline{\mathcal{L}(Q)}$.

$(ii)$ Since $Q_g$ is a random equilibrium, the law of $\Phi(t, \omega, Q_g(\omega))$ is always $\mu_Q$. In addition, $u_g$ is $\mathcal{F}_-$-measurable, and so is $Q_g$. Then by Corollary 1.3.22 in \cite{KS12}, $\mu_Q$ is a stationary measure.

$(iii)$
Let us first prove that $\mu_Q$ is strongly mixing on $\mathcal{L}(Q)$. For this purpose, we first claim that for $x \in \mathcal{L}(Q)$ and $t>0$,
\begin{equation}\label{measu1}
  \mu_Q( \mathcal{L}(Q))=1=P(t,x, \mathcal{L}(Q)).
\end{equation}
To this end, let $B_r:=\{y\in \mathbb{R}^3_+: \| y\| <r \}$.
Note that
\begin{align}\label{esSS}
\mu_Q(B_r)&=\mathbb{P} (\|Q_g \|<r)
 =\mathbb{P} (u_g <\frac r{\|Q\|})
 =\int_0^{\frac r{\|Q\|}}p_{\alpha}^{\sigma}(s)ds,
\end{align}
where
$p_{\alpha}^{\sigma}$
is the density of $\mathbb{P}\circ u_g^{-1}$
given by \eqref{den}.
This yields that $$\mu_Q(\{O\})=\displaystyle \lim_{r\rightarrow 0}\mu_Q(B_r)=0,$$
which along with Proposition \ref{thmes} (i) implies
\begin{equation}\label{es5}
\mu_Q(\mathcal{L}(Q))=1.
\end{equation}

Moreover, since $\mathcal{L}(Q)$ is invariant under $\Psi$ by Theorem \ref{gd1}, we have
 $$\Psi(\int_0^tg^2(s,\omega, 1)ds, x)\in \mathcal{L}(Q),
 \ \ \forall \,\, t\geq0, \,\, \mathbb{P}-a.s.$$
Taking into account $g(t, \omega, 1)>0$, $\forall\,\, t\geq0$, $\mathbb{P}-a.s$ and \eqref{sdf}
we come to
\begin{equation*}
P(t, x, \mathcal{L}(Q))=\mathbb{P}\{\omega: g(t, \omega, 1)\Psi(\int_0^tg^2(s, \omega, 1)ds,x)\in \mathcal{L}(Q)\}=1.
\end{equation*}
This together with \eqref{es5} yields \eqref{measu1}, as claimed. Thus, we consider the Markov semigroup $(P_t)$ in $\mathcal{L}(Q)$.

Note that for any $x\in \mathcal{L}(Q)$,
by Lemmas \ref{isolated} and \ref{contiunnm}, 
\begin{equation}\label{infinity1}
 \Psi(t, x)\rightarrow Q \ \  as \; t\rightarrow \infty,
\end{equation}
which yields that $\omega_d(x)=\{Q\}$.
Then for any $f\in C_b(\mathcal{L}(Q))$,
by the $\theta$-invariant property under $\mathbb{P}$, we have
\begin{align*}
 \int_{\mathcal{L}(Q)}f(z)P(t,x, dz)
  = \int_{\Omega}f(\Phi(t, \omega, x))\mathbb{P}(d\omega)
  =  \int_{\Omega}f(\Phi(t, \theta_{-t}\omega, x))\mathbb{P}(d\omega).
\end{align*}
Since $\mathcal{L}(Q)$ is invariant under $\Psi(t, x)$
by Theorem \ref{gd1},
and so is $\Phi(t,\omega, x)$,
taking into account the
Lebesgue dominated convergence theorem,
\eqref{asy1d}, \eqref{infinity1} and  Theorem \ref{attract}
we can pass to the limit to get
\begin{align*}
\int_{\mathcal{L}(Q)}f(z)P(t,x, dz)
\to \int_{\Omega}f(u_g(\omega)Q)\mathbb{P}(d\omega)
 = \int_{\mathcal{L}(Q)}f(z)\mu_Q(dz).
\end{align*}
This yields that for $x\in \mathcal{L}(Q)$,
$$P(t, x, \cdot)\rightarrow \mu_Q(\cdot) \ \  \textrm{weakly}\,\, \textrm{in} \,\, \mathcal{P}(\mathcal{L}(Q)),\  as \;t\rightarrow \infty.$$
Thus, an application of Theorem \ref{prato} gives that
$\mu_Q$ is strongly mixing for the semigroup $(P_t)$ in $\mathcal{L}(Q)$,
and for any $\varphi \in L^2(\mathcal{L}(Q), \mu_Q)$,
\begin{equation}\label{ss}
\displaystyle \lim_{t\rightarrow \infty} P_t\varphi=\langle \varphi, 1\rangle,\,\, \textrm{in} \,\, L^2(\mathcal{L}(Q), \mu_Q).
\end{equation}

Finally,  we conclude from \eqref{measu1} and \eqref{ss}
that for any $\varphi \in L^2(\mathbb{R}_+^3, \mu_Q)$,
$$\displaystyle \lim_{t\rightarrow \infty} P_t\varphi=\langle \varphi, 1\rangle,
 \,\, \textrm{in} \,\, L^2(\mathbb{R}_+^3, \mu_Q).$$
This along with Theorem \ref{prato} yields that $\mu_Q$ is strongly mixing on $\mathbb{R}_+^3$.
In particular,
$\mu_Q$ is ergodic on  $\mathbb{R}_+^3$.
\end{proof}

\subsection{Relation to invariant cones}  \label{Subsec-Rel-Cone}
We now study ergodic stationary measures related to invariant cones.

\subsubsection{Existence}
\begin{lemma}\label{Invariant cone} (Cone invariance)
Let $\sigma^2<2\alpha$, $\alpha+d_i>0$, $i=1, 2,3$,
${h}^*$ be the constant given by \eqref{h2} and $\Gamma(h)$ be the closed orbit as in Theorem \ref{gd1} (i).
Then for any $h\in ({h}^*, \infty)$, the cone
$$\Lambda(h):=\{\lambda y:  y\in \Gamma(h),\,\, \forall \,\, \lambda \geq 0\},$$
is invariant under the RDS $\Phi$.
That is,
for any $x\in \Lambda(h)$, $t\geq0$, $\omega\in \Omega$,
\begin{equation}\label{invariant2}
\Phi(t, \omega,  x) \in \Lambda(h),\ \ and\ \
\Phi(t, \theta_{-t}\omega,  x) \in \Lambda(h).
\end{equation}
\end{lemma}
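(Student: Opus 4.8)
The plan is to exploit the stochastic decomposition formula \eqref{sdf} to transfer the invariance question for the random dynamical system $\Phi$ onto the deterministic flow $\Psi$, where $\Lambda(h)$ is a union of rays through a first-integral level set. Fixing the initial datum $g_0=1$ in \eqref{sdf}, I would write, for any $x\in\Lambda(h)$,
$$
\Phi(t,\omega,x)=g(t,\omega,1)\,\Psi\Big(\int_0^t g^2(s,\omega,1)\,ds,\;x\Big),
$$
and set $\tau:=\int_0^t g^2(s,\omega,1)\,ds\ge 0$. Thus it suffices to check two things: that $\Lambda(h)$ is preserved by the deterministic flow $\Psi(\tau,\cdot)$ for every $\tau\ge 0$, and that $\Lambda(h)$ is closed under multiplication by the positive scalar $g(t,\omega,1)$. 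The second is immediate, since $\Lambda(h)$ is a cone and, by the explicit formula \eqref{solug}, $g(t,\omega,1)>0$ for all $t\ge 0$.

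For the first point I would use the first integral $H_1$ from Lemma \ref{first integral}. The key observation is that $H_1$ is homogeneous of degree zero, i.e. $H_1(\lambda z)=H_1(z)$ for all $\lambda>0$; indeed $\sum_{i=1}^3(\alpha+d_{4-i})=3\alpha+d_1+d_2+d_3$, so the exponents sum to $-2$ and cancel the degree-two factor $\|z\|^2$. Consequently, for $z\in\mathrm{Int}\,\mathbb{R}^3_+$ one has $H_1(z)=H_1(z/\|z\|)$, and writing $\hat z:=z/\|z\|\in\mathbb{S}^2_+$ gives the identification
$$
\Lambda(h)\setminus\{O\}=\{z\in\mathrm{Int}\,\mathbb{R}^3_+:\;H_1(z)=h\},
$$
because $\Gamma(h)=\mathbb{S}^2_+\cap\Lambda_1(h)$ by Theorem \ref{gd1}(i). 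Since $H_1$ is a first integral and $\mathrm{Int}\,\mathbb{R}^3_+$ is invariant, $\Psi(\tau,\cdot)$ preserves each level set $\{H_1=h\}$, hence preserves $\Lambda(h)\setminus\{O\}$; as $O$ is an equilibrium, $\Psi(\tau,\cdot)$ preserves $\Lambda(h)$.

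Combining the two points yields $\Phi(t,\omega,x)=g(t,\omega,1)\,\Psi(\tau,x)\in\Lambda(h)$, which is the first inclusion in \eqref{invariant2}. The pull-back statement follows verbatim upon replacing $\omega$ by $\theta_{-t}\omega$ throughout, since $g(t,\theta_{-t}\omega,1)>0$ and $\int_0^t g^2(s,\theta_{-t}\omega,1)\,ds\ge 0$ as well. I expect the only delicate step to be the identification of $\Lambda(h)$ with a level set of $H_1$, together with the attendant care that $\Gamma(h)$ lies in $\mathrm{Int}\,\mathbb{S}^2_+$ so that $H_1$ is well defined along the entire cone; once this is in place, the positivity of $g$ and the first-integral property close the argument.
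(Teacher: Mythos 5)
Your proof is correct, and it rests on the same pillars as the paper's (one-line) argument: the stochastic decomposition formula \eqref{sdf}, the strict positivity of the logistic solution $g$, and invariance under the deterministic flow $\Psi$. The difference lies in how the deterministic step is executed. The paper writes $x=\lambda y$ with $y\in\Gamma(h)$, $\lambda\ge 0$, and applies \eqref{sdf} with base point $g_0=\lambda$, so that only the invariance of the closed orbit $\Gamma(h)$ itself is needed: $\Phi(t,\omega,x)=g(t,\omega,\lambda)\,\Psi\bigl(\int_0^t g^2(s,\omega,\lambda)\,ds,\,y\bigr)$ is a nonnegative multiple of a point of $\Gamma(h)$, hence lies in $\Lambda(h)$. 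You instead fix $g_0=1$, which forces you to prove the stronger statement that the whole cone $\Lambda(h)$ is $\Psi$-invariant; you obtain this from the degree-zero homogeneity of the first integral $H_1$ (the exponents do sum to $-2$ and cancel the factor $\|z\|^2$, since $3\alpha+d_1+d_2+d_3>0$ under the hypothesis $\alpha+d_i>0$), which identifies $\Lambda(h)\setminus\{O\}$ with the level set $\{z\in\mathrm{Int}\,\mathbb{R}^3_+: H_1(z)=h\}$, i.e.\ with $\Lambda_1(h)$ of \eqref{cone0}. Both routes are sound; the paper's scaling trick is shorter, while your level-set identification is more self-contained and makes explicit the geometric fact, implicit in Theorem \ref{gd1}, that the invariant cones are exactly the level sets of $H_1$ — the same fact underlying the foliation of $\mathrm{Int}\,\mathbb{R}^3_+$ used elsewhere in the paper.
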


\begin{proof}
Lemma \ref{Invariant cone} follows from  formula \eqref{sdf},
the positivity of the logistic solution $g$
and the invariance of $\Gamma(h)$ under $\Psi$.
\end{proof}

The existence of stationary measures
on invariant cones
is the content of Proposition \ref{existence} below.

\begin{proposition}\label{existence}(Existence of stationary measures on invariant cones)
Let $\sigma^2<2\alpha$, $\alpha+d_i>0$, $i=1, 2, 3$.
Let $Q^*$ be the equilibrium of $\Psi$ as in Proposition \ref{pro1} (i).
Then for any $x\in Int(\mathbb{R}^3_+)\backslash \mathcal{L}(Q^*)$, there exist 
$h\in ({h}^*, \infty)$ and a stationary measure $\nu_x$
such that $x\in \Lambda(h)\setminus\{O\}$ and  $\nu_x(\Lambda(h)\setminus\{O\})=1$.
\end{proposition}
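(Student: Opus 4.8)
The plan is to construct $\nu_x$ by the Krylov--Bogolyubov averaging procedure and then pin down its support using the cone invariance together with the radial analysis coming from the decomposition formula \eqref{sdf}. First I would fix $x\in \mathrm{Int}(\mathbb{R}^3_+)\setminus \mathcal{L}(Q^*)$ and set $h:=H_1(x)$, where $H_1$ is the first integral from Lemma \ref{first integral}. Since $H_1$ is invariant under the scaling $x\mapsto \lambda x$ (the homogeneity of its two factors cancels), $h$ depends only on the ray through $x$; because $Q^*$ is the unique minimizer of $H_1$ on $\mathrm{Int}\,\mathbb{S}^2_+$ with value $h^*$ and $x\notin \mathcal{L}(Q^*)$, one gets $h>h^*$ and $x\in \Lambda(h)\setminus\{O\}$, where $\Gamma(h)=\mathbb{S}^2_+\cap\Lambda_1(h)$ is the periodic orbit of Theorem \ref{gd1}(i). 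I then introduce the occupation measures $\mu_T:=\frac1T\int_0^T P(t,x,\cdot)\,dt$.

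Next I would establish tightness of $\{\mu_T\}_{T>0}$. The Lyapunov computation in the proof of Proposition \ref{RDS} gives $\mathscr{L}^\sigma V(x)=\|x\|^2(-2\alpha\|x\|^2+2\alpha+\sigma^2)$ for $V(x)=\|x\|^2$, whence $\mathscr{L}^\sigma V\le C_1-C_2 V$ for suitable constants $C_1,C_2>0$; this yields $\sup_{t\ge0}\mathbb{E}\,V(\Phi(t,\omega,x))<\infty$ and hence, by Chebyshev's inequality, a uniform-in-$T$ bound $\mu_T(\{\|y\|>R\})\le C R^{-2}$. By Prokhorov's theorem there is $T_n\to\infty$ with $\mu_{T_n}\rightharpoonup \nu_x$ weakly, and the standard Krylov--Bogolyubov argument shows $\nu_x$ is stationary for $(P_t)$. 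For the support, Lemma \ref{Invariant cone} gives $P(t,x,\Lambda(h))=1$ for all $t\ge0$, so $\mu_T(\Lambda(h))=1$; since $\Lambda(h)$ is closed, the portmanteau theorem yields $\nu_x(\Lambda(h))\ge \limsup_n \mu_{T_n}(\Lambda(h))=1$, i.e. $\nu_x(\Lambda(h))=1$.

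The crux is to exclude an atom at the origin, i.e. to prove $\nu_x(\{O\})=0$. Here I would use the decomposition formula \eqref{sdf} with $g_0:=\|x\|$, writing $\|\Phi(t,\omega,x)\|=g(t,\omega,g_0)\,\rho(t,\omega)$ with $\rho(t,\omega):=\|\Psi(\int_0^t g^2(s,\omega,g_0)\,ds,\,x/g_0)\|$. By \eqref{int-g2-infty} the time change tends to $\infty$ a.s., and since $\mathbb{S}^2$ is the global attractor (Lemma \ref{dglobal}) one has $\rho(t,\omega)\to1$ a.s. Combined with the convergence in law $g(t,\cdot,g_0)\Rightarrow \mu_g^\sigma$ from \eqref{g1}, splitting on $\{\rho(t)\ge \tfrac12\}$ gives, for every $\epsilon>0$, $\limsup_{t\to\infty}\mathbb{P}(\|\Phi(t,\omega,x)\|<\epsilon)\le \mu_g^\sigma([0,2\epsilon])$. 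Passing to Cesàro averages and using the portmanteau theorem for the open ball $B_\epsilon$ gives $\nu_x(B_\epsilon)\le \liminf_n\mu_{T_n}(B_\epsilon)\le \mu_g^\sigma([0,2\epsilon])$. Letting $\epsilon\downarrow0$ and recalling from \eqref{den} that $\mu_g^\sigma$ has no atom at $0$, we conclude $\nu_x(\{O\})=0$, hence $\nu_x(\Lambda(h)\setminus\{O\})=1$.

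The main obstacle is precisely this last step: ruling out the leakage of mass to the repelling equilibrium $O$. Tightness and the cone invariance are routine, but without the radial control the averaged measure could a priori charge $O$ (indeed $\delta_O$ is itself stationary). The decomposition formula is what saves the argument, by reducing the radial component to the one-dimensional logistic diffusion whose stationary law $\mu_g^\sigma$ is explicitly non-degenerate at the origin. An alternative to the distributional argument would be to verify directly that $O$ is inaccessible from $\Lambda(h)\setminus\{O\}$ and then invoke the strong Feller/irreducibility properties on $\Lambda(h)\setminus\{O\}$ referenced for the uniqueness statement, but the radial reduction above is the most economical route given the tools already developed.
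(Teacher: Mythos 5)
Your proposal is correct and follows essentially the same route as the paper's proof: Krylov--Bogolyubov averaging with the Lyapunov function $V(x)=\|x\|^2$ for existence, cone invariance plus portmanteau for $\nu_x(\Lambda(h))=1$, and the stochastic decomposition formula reducing the radial component to the logistic diffusion, whose stationary law \eqref{den} has no atom at $0$, to rule out mass at $O$. The only cosmetic differences are that the paper locates $h$ via Corollary \ref{decomposition} rather than by evaluating $H_1(x)$, cites \cite[Theorem 3.3.5]{krz} instead of carrying out Krylov--Bogolyubov by hand, and controls the radial part with the uniform bound $\inf_{t>0}\|\Psi(t,x)\|\geq c(x)>0$ from the repeller property of $O$ rather than your asymptotic $\rho(t)\to 1$.
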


\begin{proof}
We recall from
Corollary \ref{decomposition} that
$$Int(\mathbb{R}^3_+)\setminus \mathcal{L}(Q^*)=\bigcup_{ {h}^*<h<\infty}(\Lambda(h)\backslash\{O\}).$$
Hence, for  $x\in Int(\mathbb{R}^3_+)\backslash \mathcal{L}(Q^*)$,
we have $x\in \Lambda(h)\setminus\{O\}$
for some $h\in ({h}^*, \infty)$.

Let $V$ and $\mathscr{L}^{\sigma}$
be the Lyapunov function and Fokker-Planck operator
as in \eqref{V} and \eqref{Fop}, respectively.
Then by straightforward computations,
$$\mathscr{L}^{\sigma}V(y)=V(y)(-2\alpha\|y\|^2+2\alpha+\sigma^2),$$
which yields that for sufficiently large $R>0$,
$$\displaystyle \sup_{\|y\|>R}\mathscr{L}^{\sigma}V(y)\leq -A_R:= R^2(-2\alpha R^2+4\alpha). $$
Hence,  as $R\rightarrow \infty$,
\begin{align} \label{lya1}
 \displaystyle \inf_{\|y\|>R}V(y)\rightarrow \infty, \ \
  \displaystyle \sup_{\|y\|>R}\mathscr{L}^{\sigma}V(y)\leq -A_R\rightarrow -\infty.
\end{align}
By virtue of Theorem 3.3.5 in \cite{krz}, we thus
derive that there exists a stationary measure $\nu_x \in \mathcal{P}(\mathbb{R}^3_+)$
of the Markov semigroup $(P_t)$, satisfying that for some sequence $\{t_n\}$ tending to infinity,
\begin{equation}\label{es}
 \frac1{t_n}\int_0^{t_n} P(s, x, \cdot)ds  \stackrel{w}{\rightharpoonup}  \nu_x
 \ \  as \,\, n\rightarrow \infty.
\end{equation}

Next we show that $\nu_x(\Lambda(h))=1$. To this end,
by \eqref{invariant2},  
\begin{equation}\label{cone}
\mathbb{P} (\Phi(t, \cdot, x) \in \mathbb{R}_+^3 \backslash \Lambda(h))=0,\ \  \forall\,\, x\in \Lambda(h).
\end{equation}
Then, using   \eqref{es} we get
\begin{align*}
\nu_x(\Lambda(h)^c) &\leq
\liminf\limits_{n\rightarrow \infty}\frac1{t_n}\int_0^{t_n}P(s, x, \mathbb{R}_+^3 \backslash \Lambda(h))ds\\
&=
\liminf\limits_{n\rightarrow \infty}\frac1{t_n}\int_0^{t_n}
    \mathbb{P}(\Phi(s,\cdot, x) \in \mathbb{R}_+^3 \backslash \Lambda(h)) ds
=0,
\end{align*}
which yields that $\nu_x(\Lambda(h))=1$, as claimed.

It remains to prove that $\nu_x(\{O\})=0$.
Note that, since $O$ is a local repeller by Theorem \ref{gd1}, and $x\neq O$,
there exists  $c(x)>0$ such that
\begin{equation}\label{ps}
     \inf_{t>0}\|\Psi(t, x) \| \geq c(x)>0.
\end{equation}
Then, by \eqref{sdf} and \eqref{ps},
\begin{equation*}\label{es4}
\begin{aligned}
P(t, x, B_R)
=\mathbb{P}(\|g(t, \cdot, 1)\Psi(\int_0^tg^2(s, \cdot, 1)ds, x)\|<R)
\leq \mathbb{P}(g(t, \cdot, 1)<\frac R{c(x)}).
\end{aligned}
\end{equation*}
Taking into account  \eqref{es} and \eqref{g1} we have
\begin{equation}\label{es1}
\begin{aligned}
\nu_x(B_R)&\leq \liminf \limits_{n\rightarrow \infty}\frac1{t_n}\int_0^{t_n}P(t, x, B_R)dt\\
&\leq \liminf \limits_{n\rightarrow \infty}\frac1{t_n}\int_0^{t_n}\mathbb{P}(g(t, \cdot,1)<\frac R{c(x)})dt\\
&\leq \lim \limits_{t\rightarrow \infty}\mathbb{P}(g(t, \cdot,1)<\frac R{c(x)})
= \int_0^{\frac R{c(x)}}p_{\alpha}^{\sigma}(s)ds,
\end{aligned}
\end{equation}
where $p_{\alpha}^{\sigma}$ is the density of $\mu_g^{\sigma}$ given by \eqref{den}.
Hence, letting $R\rightarrow 0$ we have $\nu_x(\{O\})=\lim_{R\rightarrow 0}\nu_x(B_R)=0$.
The proof is thus complete.
\end{proof}

\subsection{Uniqueness}

We further prove that
the stationary measure on each invariant cone without the origin $O$ is indeed unique.
This is the content of Proposition \ref{uniqueness} below.

\begin{proposition}\label{uniqueness}(Uniqueness of stationary measures on invariant cones)
Assume the conditions in Proposition \ref{existence} to hold.
Then for any $h\in ({h}^*, \infty)$,
there exists a unique, ergodic stationary measure $\nu_h$ on $\Lambda(h)\backslash\{O\}$.

Moreover,
$\nu_h$ is strongly mixing on $\mathbb{R}_+^3$.
In particular, $\nu_h \in \mathcal{P}_e(\mathbb{R}_+^3)$.
\end{proposition}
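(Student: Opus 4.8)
The plan is to derive uniqueness from the \emph{strong Feller} property together with \emph{irreducibility} of the restricted Markov semigroup on the slit cone $\Lambda(h)\setminus\{O\}$, and then invoke the classical Doob--Khasminskii dichotomy. Existence of a stationary measure supported on $\Lambda(h)\setminus\{O\}$ is already supplied by Proposition \ref{existence} (the argument there, via \eqref{es1} and \eqref{den}, also gives that it charges neither $\{O\}$ nor $\mathbb{R}^3_+\setminus\Lambda(h)$), so only uniqueness and mixing remain, after which the unique measure is named $\nu_h$.

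First I would use the decomposition formula \eqref{sdf} to identify the dynamics on the cone. Since $\Gamma(h)\subset \mathrm{Int}\,\mathbb{S}^2_+$ is a smooth closed orbit for $h>h^*$ (Theorem \ref{gd1}(i)), the slit cone is diffeomorphic to $(0,\infty)\times(\mathbb{R}/T(h)\mathbb{Z})$ via $x\mapsto(r,\tau)$, where $r=\|x\|$, $\tau$ is the time--parameter along $\Gamma(h)$, and $T(h)$ is its period. Writing $x=r\hat x$ with $\hat x\in\Gamma(h)$ and taking $g_0=r$ in \eqref{sdf} gives $\Phi(t,\omega,x)=g(t,\omega,r)\,\Psi(\int_0^t g^2\,ds,\hat x)$, so that $r_t=g(t,\omega,r)$ solves the logistic SDE \eqref{sysg} and the phase obeys $d\tau_t=r_t^2\,dt$. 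Thus on $\Lambda(h)\setminus\{O\}$ the process is the degenerate diffusion with generator $L=r(\alpha-\alpha r^2)\partial_r+r^2\partial_\tau+\tfrac12\sigma^2r^2\partial_r^2$, whose noise acts only radially.

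The strong Feller property I would obtain from hypoellipticity. In Stratonovich form the diffusion field is $X_1=\sigma r\,\partial_r$ and the drift field $X_0$ carries the phase component $r^2\partial_\tau$; a direct bracket computation shows that $[X_1,X_0]$ has $\partial_\tau$--component equal to $2\sigma r^2$, which is nonzero for every $r>0$. Hence $\{X_1,[X_1,X_0]\}$ span the full tangent space at each point of the slit cone, the parabolic H\"ormander condition holds, the transition kernel $P(t,\cdot,\cdot)$ admits a smooth density, and $(P_t)$ is strong Feller on $\Lambda(h)\setminus\{O\}$. For irreducibility I would appeal to the Stroock--Varadhan support theorem: solving $\dot r=r(\alpha-\alpha r^2+\sigma u)$ for the control $u$ realizes an arbitrary positive radial profile $r(\cdot)$, while $\tau_t=\tau_0+\int_0^t r^2\,ds$ is strictly increasing and, by rescaling $r$, can be driven to any prescribed value modulo $T(h)$; thus every point of $(0,\infty)\times(\mathbb{R}/T(h)\mathbb{Z})$ is reachable and each nonempty open set has positive transition probability.

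With strong Feller and irreducibility in hand, the Doob--Khasminskii theorem yields at most one invariant probability measure on $\Lambda(h)\setminus\{O\}$; combined with Proposition \ref{existence} this produces the unique ergodic $\nu_h$ and the convergence $P(t,x,\cdot)\to\nu_h$, i.e.\ strong mixing on the cone. To upgrade mixing to $\mathbb{R}^3_+$ I would reproduce the final step of Proposition \ref{thmes}: the cone invariance (Lemma \ref{Invariant cone}) together with $\nu_h(\{O\})=0$ gives $\nu_h(\Lambda(h)\setminus\{O\})=1=P(t,x,\Lambda(h)\setminus\{O\})$ for $x\in\Lambda(h)\setminus\{O\}$, so that $L^2(\mathbb{R}^3_+,\nu_h)=L^2(\Lambda(h)\setminus\{O\},\nu_h)$ and Theorem \ref{prato} transfers the mixing. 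The main obstacle is the strong Feller step: because the Wiener noise is one--dimensional and purely radial, uniform ellipticity fails and one must verify H\"ormander's bracket condition in the cone coordinates, the geometric point being precisely that the radial noise leaks into the phase direction through the time change $d\tau=r^2\,dt$.
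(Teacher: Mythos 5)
Your argument is correct, and its skeleton is exactly the paper's: pass to cylinder coordinates on the slit cone, in which the radial motion is the logistic SDE \eqref{sysg} and the phase is its quadratic time change; prove that the resulting degenerate two-dimensional diffusion is strong Feller and irreducible; conclude uniqueness and strong mixing by the Doob--Khasminskii dichotomy; and transfer mixing to $\mathbb{R}^3_+$ via cone invariance (Lemma \ref{Invariant cone}) and Theorem \ref{prato}, as in Proposition \ref{thmes}. Indeed, the paper's conjugation $\psi(z)=(\ln\lambda,\varphi(y))$ onto $\mathbb{R}\times\mathbb{S}$ is your $(r,\tau)$ chart in logarithmic scale, and its pair $(H,T)$ is your $(\log r_t,\tau_t)$. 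Where you genuinely differ is in how the two pillars are verified. For the strong Feller property the paper lifts $(H,T)$ to the plane (equation \eqref{newnewsystem}) and cites Theorem 4.2 of \cite{dong1}; you instead check the parabolic H\"ormander condition directly, noting that $X_1=\sigma r\,\partial_r$ together with $[X_1,X_0]$, whose $\partial_\tau$-component is $2\sigma r^2\neq 0$, spans the tangent space at every point with $r>0$. This is a clean, self-contained hypoellipticity argument; to make it airtight you should add the (easy) remark that the radial process neither hits $r=0$ nor explodes in finite time --- immediate from the explicit formula \eqref{solug} --- so that local smoothness of transition densities indeed upgrades to the strong Feller property on the whole slit cone. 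For irreducibility the paper constructs admissible paths by hand (the sets $\tilde B$, $B$ and the maps $\mathbb{L}$, $\mathscr{E}$) and then invokes the full support of Wiener measure; your route through the Stroock--Varadhan support theorem, with the controllability observation that the radial profile is arbitrary while the phase increment $\int_0^t r^2\,ds$ sweeps all of $(0,\infty)$ by an intermediate-value argument, is the same idea packaged through a standard citation. In sum, your proof buys brevity by leaning on two classical theorems (H\"ormander and Stroock--Varadhan), whereas the paper's is longer but essentially self-contained apart from the reference to \cite{dong1}; both are sound.
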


\begin{proof}
We use the analogous arguments as in \cite{jiang1}.
 Fix $h\in ({h}^*, \infty)$ and $y_0\in \Gamma(h)$. Let $\varphi(y):=\inf\{t>0, \Psi(t, y_0)=y\}$ for any $y\in \Gamma(h)$. Let $T:=\varphi(y_0)$ be the period of the orbit $(\Psi(t, y_0))$, and $\mathbb{S}:= \mathbb{R}_+$ mod $T$. Then, $\varphi: \Gamma(h)\rightarrow \mathbb{S}$ is a homeomorphism.

 By the definition of $\Lambda(h)$, for any $z\in \Lambda(h)\setminus\{O\}$, there exist
 $\lambda>0$ and $y\in \Gamma(h)$ such that $z=\lambda y$. Then, define $\psi: \Lambda(h)\setminus\{O\}\rightarrow \mathbb{R}\times \mathbb{S}$ by
$$\psi(z):=(\ln \lambda, \varphi(y)), \ \  \forall \ z\in \Lambda(h)\setminus\{O\},$$
 Note that $\psi: \Lambda(h)\setminus\{O\}\rightarrow \mathbb{R}\times \mathbb{S}$ is a homeomorphism, and its inverse is $\psi^{-1}(x, \tau)=e^x\Psi(\tau, y_0)$. Moreover, for any $z=\lambda y\in \Lambda(h)\backslash \{O\}$ with $\lambda>0$ and $y\in \Gamma(h)$, by \eqref{sdf}
 and the invariance of $\Gamma(h)$ under $\Psi$, 
 \begin{equation*}
   \Psi(\int_0^tg^2(s, \omega, \lambda)ds,y)\in \Gamma(h),
   \end{equation*}
and
\begin{equation}\label{es10}
\Phi(t, \omega, z)=g(t, \omega, \lambda)\Psi(\int_0^tg^2(s, \omega, \lambda)ds,y)\; \in \Lambda(h)\setminus \{O\}.
\end{equation}
Then let $(H_0, T_0)=\psi(z)=(\ln \lambda, \varphi(y))$ and set
$$H(t, \omega, H_0):=\ln (g(t, \omega, \lambda)),\; T(t, \omega, H_0, T_0):=\varphi(\Psi(\int_0^tg^2(s, \omega, \lambda)ds, y)).$$
We get
\begin{equation*}
\begin{aligned}
\psi(\Phi(t, \omega, z))&=(\ln (g(t, \omega, \lambda)), \varphi(\Psi(\int_0^tg^2(s, \omega, \lambda)ds,y)))\\
&=(H(t, \omega, H_0), T(t, \omega, H_0, T_0)).
\end{aligned}
\end{equation*}
Thus, $\Phi$ on $\Lambda(h)\setminus\{O\}$ and $(H, T)$ on $\mathbb{R}\times \mathbb{S}$ are conjugate through the mapping $\psi$.

Note that, by It\^o's formula and the definition of $\varphi$,
\begin{align}\label{newsystem}
H(t, \omega, H_0)&=H_0+\int_0^t(\alpha-\frac12 \sigma^2-\alpha e^{2H(s, H_0)})ds +\int_0^t\sigma dW_s,\\
T(t, H_0, T_0)&=(T_0+\int_0^te^{2H(s, H_0)}ds) \ \  mod \; T.
\end{align}

\paragraph{\bf Strong Feller:}
Let us first prove that the Markov semigroup associated to $(H, T)$ on $\mathbb{R}\times \mathbb{S}$ is strong Feller at any time $t>0$. To this end,
for any $(H_0, \tilde{T}_0)\in \mathbb{R}^2$, consider the stochastic equations
\begin{equation}\label{newnewsystem}
\begin{aligned}
H(t, H_0)&=H_0+\int_0^t(\alpha-\frac12\sigma^2-\alpha e^{2H(s, H_0)})ds+ \int_0^t \sigma dW_s,\\
\tilde{T}(t, H_0, \tilde{T}_0)&=\tilde{T}_0+\int_0^t e^{2H(s, H_0)}ds.
\end{aligned}
\end{equation}
By Theorem 4.2 in \cite{dong1}, the corresponding semigroup $(\tilde{P}_t)_{t\geq 0}$ is strong Feller on $\mathbb{R}^2$ for any $t>0$, i.e.,  $\forall \,\, f\in \mathcal{B}_b(\mathbb{R}^2)$,
$$(H_0, \tilde{T}_0)\in \mathbb{R}^2\mapsto \tilde{P}_t f= \mathbb{E}f(H(t, H_0), \tilde{T}(t, H_0, \tilde{T}_0))\,\, \textrm{is} \,\, \textrm{continuous} .$$
Hence, for any $F\in \mathcal{B}_b(\mathbb{R}\times \mathbb{S})$, letting $f_F(H, \tilde{T}):=F(H, \tilde{T}\; mod \; T)\in \mathcal{B}_b(\mathbb{R}^2)$, we have
\begin{equation*}
\begin{aligned}
(H_0, T_0)\in \mathbb{R}\times \mathbb{S} &\mapsto \mathbb{E}F(H(t, H_0),
T(t, H_0, T_0))\\
&=\mathbb{E}f_F(H(t, H_0), \tilde{T}(t, H_0, T_0)) \,\, \textrm{is} \,\, \textrm{continous},
\end{aligned}
\end{equation*}
which yields that $(H, T)$ is strong Feller on $\mathbb{R}\times \mathbb{S}$ at any $t>0$.

\paragraph{\bf Irreducibility:}
 Next we prove that $(H, T)$ is irreducible on $\mathbb{R}\times \mathbb{S}$,
 that is, for any $t>0$, for any $A:=(a, b)\times (c, d)\in \mathbb{R}\times \mathbb{S}$
 with $a<b$ and $c<d$,
\begin{equation}\label{aim}
\mathbb{P}((H(t, H_0), T(t, H_0, T_0))\in A)>0,  \ \ \forall \,\, (H_0, T_0)\in \mathbb{R}\times \mathbb{S}.
\end{equation}

In order to prove \eqref{aim}, we set
$$\tilde{A}:=(e^a, e^b)\times A_{c, d},\ \  A_{c,d}:=\bigcup_{n=0}^{\infty}(c+nT-T_0, d+nT-T_0).$$
Define the map $\mathbb{L}: C([0,t], \mathbb{R}_+)\rightarrow \mathbb{R}^2$ by
$$\mathbb{L}(f):=(\frac{f(t)}{\sqrt{e^{-2H_0}+2\alpha\int_0^tf^2(s)ds}}, \int_0^t\frac{f^2(s)}{e^{-2H_0}+2\alpha\int_0^sf^2(r)dr}ds).$$
Then $\mathbb{L}$ is continuous on $C([0,t]; \mathbb{R}_+)$,
and by the definition of $(H, T)$,
\begin{equation}\label{aim1}
\mathbb{P}((H(t, H_0), T(t, H_0, T_0))\in A)=\mathbb{P}(\mathbb{L}(e^{(\alpha-\frac12\sigma^2)\cdot+\sigma W_{\cdot}})\in \tilde{A}).
\end{equation}

To analyse the right-hand side above, we set
\begin{equation}\label{aim2}
B:=\{f\in C_1([0,t], \textrm{Int}\mathbb{R}_+):\ \mathbb{L}(f)\in \tilde{A}\}
\end{equation}
and shall prove that $B\neq \varnothing$,
where $C_1([0,t], \textrm{Int}\mathbb{R}_+)$
is the set of all continuous functions in $\textrm{Int}\mathbb{R}_+$ starting from $1$ at time $0$.

To this end, let us first consider the set
$$\tilde{B}:=\{h\in C([0,t], \textrm{Int}\mathbb{R}_+): h(0)=e^{H_0}, (h(t), \int_0^th^2(s)ds)\in \tilde{A}\}.$$
 Take $\tilde{n}$ large enough such that $c+\tilde{n}T-T_0\geq \frac{t(e^{2H_0}+e^{2b})}4$, and let $\tilde{l}:=\frac{c+d+2\tilde{n}T-2T_0}t-\frac12 e^{2H_0}-\frac18(e^a+e^b)^2>0$. Define $h\in C([0,t], \textrm{Int}\mathbb{R}_+)$ by
\begin{equation}
h(s):=
\begin{cases}
\sqrt{\frac{2(\tilde{l}-e^{2H_0})}t\cdot s+ e^{2H_0}},\ \ 0\leq s\leq\frac t2,\\
\sqrt{\frac{2(\frac{e^a+e^b}2)^2-2\tilde{l}} t\cdot s+2\tilde{l}-(\frac{e^a+e^b}2)^2}, \ \ \frac t2<s\leq t.
\end{cases}
\end{equation}
Then $h(0)=e^{H_0}$, $h(t)=\frac{e^a+e^b}2\in (e^a, e^b)$, and $h(\frac t2)=\sqrt{\tilde{l}}$. Note that
 $$\int_0^th^2(s)ds=\frac t4(e^{2H_0}+2\tilde{l}+\frac{(e^a+e^b)^2}4)=\frac{c+d}2+\tilde{n}T-T_0\in A_{c,d}.$$
This yields that $h\in \tilde{B}$, and so, $\tilde{B}\neq\varnothing$.

Then, coming back to the set $B$ defined in \eqref{aim2} we take $h\in \tilde{B}$ and  $$f(s):=e^{-H_0}h(s)e^{\alpha\int_0^sh^2(r)dr},\; s\in [0,t].$$
Then $f\in C_1([0,t], \textrm{Int}\mathbb{R}_+)$, $f(0)=e^{-H_0}h(0)=1$, $$\frac{f(t)}{\sqrt{e^{-2H_0}+2\alpha\int_0^tf^2(s)ds}}=h(t)\in (e^a, e^b),$$
and $$\int_0^t\frac{f^2(s)}{e^{-2H_0}+2\alpha\int_0^sf^2(r)dr}ds=\int_0^th^2(s)ds\in A_{c,d}.$$
This yields that $f\in B$, and so,
$B\neq\varnothing$, as claimed.
In particular,
$\mathbb{L}^{-1}(\tilde{A})$ is a non-empty open set in $C_1([0,t]; \textrm{Int}\mathbb{R}_+)$.

Then, define the map
$\mathscr{E}: C_0([0,t], \mathbb{R})\rightarrow C_1([0,t], \textrm{Int}\mathbb{R}_+)$ by
$$\mathscr{E}(\tilde{f}):=e^{(\alpha-\frac12 \sigma^2)\tilde{f}+\sigma \tilde{f}(\cdot)},
\ \ \tilde{f}\in C_0([0,t];  \mathbb{R}), $$
where $C_0([0,t], \mathbb{R})$ is the set of all
continuous functions in $\mathbb{R}$ starting from $0$.

Note that $\mathscr{E}$ is continuous, and so,
$\mathscr{E}^{-1}\circ \mathbb{L}^{-1}(\tilde{A})$
is a non-empty open set in $C_0([0,t];   \mathbb{R})$, the irreducibility of Wiener process  (see e.g. \cite{Nualart})
then yields
\begin{align*}
\mathbb{P}((H(t, H_0), T(t, H_0, T_0))\in A)&=\mathbb{P}(\mathbb{L}(e^{(\alpha-\frac12\sigma^2)\cdot+\sigma W_{\cdot}})\in \tilde{A}),\\
&=\mathbb{P}(\mathscr{E}^{-1}\circ \mathbb{L}^{-1}(\tilde{A}))>0.
\end{align*}
Thus, $(H, T)$ is irreducible on $\mathbb{R}\times \mathbb{S}$ for any $t>0$.

\paragraph{\bf Uniqueness and strong mixing:}
Now,
since strong Feller and irreducibility are equivalent under
conjugation maps,
we infer that the Markovian semigroup $(P_t)$ associated with $\Phi$ on $\Lambda(h)\setminus \{O\}$
is strong Feller and irreducible at any $t>0$,
which in turn yields the uniqueness and strongly mixing of
$\Phi$ on $\Lambda(h)\setminus\{O\}$.

Let $\nu_h$ be this unique stationary measure on $\Lambda(h)\setminus \{O\}$.
Then, an application of Theorem \ref{prato} gives that for any $\varphi\in L^2(\Lambda(h)\setminus \{O\}, \nu_h)$,
\begin{equation}\label{uni}
\displaystyle\lim_{t \rightarrow \infty}P_t\varphi = \langle \varphi, 1\rangle \ \  in \; L^2(\Lambda(h)\setminus\{O\}, \nu_h).
\end{equation}

Finally, we claim that $\nu_h$ is also strongly mixing on $\mathbb{R}_+^3$.
Actually, for any $x\in \Lambda(h)\setminus \{O\}$ and $t>0$, by \eqref{es10},
\begin{equation}\label{es8}
P(t,x, \Lambda(h)\setminus \{O\})=\mathbb{P}\{\omega: \Phi(t,\omega,x)\in \Lambda(h)\setminus \{O\}\}=1.
\end{equation}
Then,
since  $\nu_h( \Lambda(h)\setminus \{O\})=1$,
by \eqref{uni} and \eqref{es8}, we get that for any $\varphi\in L^2(\mathbb{R}_+^3, \nu_h)$,
\begin{align*}
&\displaystyle\lim_{t \rightarrow \infty}\int_{\mathbb{R}^3_+}| P_t\varphi(x)-\langle \varphi, 1\rangle(x)|^2 \nu_h(dx)\\
=&\displaystyle\lim_{t \rightarrow \infty}\int_{\Lambda(h)\setminus \{O\}}|\int_{\mathbb{R}^3_+}\varphi(y)P(t,x, dy)-\langle \varphi, 1\rangle(x) |^2\nu_h(dx)\\
=&\displaystyle\lim_{t \rightarrow \infty}\int_{\Lambda(h)\setminus \{O\}}|\int_{\Lambda(h)\setminus \{O\}}\varphi(y)P(t,x, dy)-\langle \varphi, 1\rangle(x) |^2 \nu_h(dx)
=0,
\end{align*}
which yields that \eqref{uni} holds in $L^2(\mathbb{R}_+^3, \nu_h)$.
Therefore, by Theorem \ref{prato},
$\nu_h$ is strongly mixing on $\mathbb{R}^3_+$.
\end{proof}

\subsection{Vanishing noise limit}  \label{Subsec-Vanish}
This section concerns the relationship
between stationary measures for the deterministic and stochastic Kolmogorov systems,
when the noise intensity $\sigma$ tends to zero.
In order to indicate the dependence on the noise strength $\sigma$ in \eqref{sys5},
we rewrite the ergodic stationary measures
$\mu_Q$ and $\nu_h$ in Propositions \ref{thmes} and \ref{existence} as $\mu_Q^{\sigma}$ and $\nu_h^{\sigma}$, respectively.

\begin{theorem}\label{limitmeasure}(Vanishing noise limit of stationary measures)
\begin{itemize}
  \item [$(i)$] Let $\sigma^2<2\alpha$ and $Q\in \mathcal{E}\backslash \{O\}$ be any equilibrium of \eqref{sys6}. Then
  \begin{align*}
     \mu_Q^{\sigma} \stackrel{w}{\rightharpoonup}\delta_Q\ \ as\ \sigma \rightarrow 0.
  \end{align*}
  \item [$(ii)$] Assume the conditions in Proposition \ref{existence} to hold.
  Let $\nu_h^\sigma$ denote the corresponding ergodic stationary measure on $\Lambda(h)$,
  $h\in ({h}^*, \infty)$,
  and $\tilde{\nu}_h$ the Haar measure on $\Gamma(h)$.
  Then,
  \begin{align*}
     \nu_h^\sigma \stackrel{w}{\rightharpoonup} \tilde{\nu}_h \ \ as\ \sigma \rightarrow 0.
  \end{align*}
\end{itemize}
\end{theorem}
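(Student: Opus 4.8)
The plan is to prove the two convergences separately, reducing $(ii)$ to a concentration statement for the conjugate process $(H,T)$ constructed in the proof of Proposition \ref{uniqueness}.

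\emph{Part $(i)$.} Since $\mu_Q^\sigma$ is the law of $Q_g=u_g Q$ with $Q$ fixed, the continuous map $s\mapsto sQ$ reduces the claim to $\mu_g^\sigma\rightharpoonup\delta_1$ on $\mathbb{R}_+$, i.e. $u_g\to 1$ in distribution as $\sigma\to 0$. First I would exploit the explicit density \eqref{den}: writing $\beta:=\alpha/\sigma^2\to\infty$, the substitution $y=x^2$ shows that $u_g^2$ has a Gamma law of shape $\beta-\tfrac12$ and rate $\beta$, so that $\mathbb{E}[u_g^2]=1-\tfrac{\sigma^2}{2\alpha}$ (consistent with \eqref{b2.1}) and $\operatorname{Var}(u_g^2)=(\beta-\tfrac12)\beta^{-2}\to 0$. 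Hence $u_g^2\to 1$ in $L^2(\mathbb{P})$, so $u_g\to 1$ in probability and thus in distribution; this gives $\mu_g^\sigma\rightharpoonup\delta_1$ and therefore $\mu_Q^\sigma\rightharpoonup\delta_Q$.

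\emph{Part $(ii)$.} I would work through the conjugacy $\psi:\Lambda(h)\setminus\{O\}\to\mathbb{R}\times\mathbb{S}$ of Proposition \ref{uniqueness}, under which $\Phi$ becomes the process $(H,T)$ of \eqref{newsystem} and $\nu_h^\sigma$ becomes its unique stationary measure $\bar\nu_h^\sigma:=\psi_*\nu_h^\sigma$. Since $\psi^{-1}(x,\tau)=e^x\Psi(\tau,y_0)$ and $\tilde\nu_h$ is precisely the pushforward under $\tau\mapsto\Psi(\tau,y_0)$ of the normalized Lebesgue measure on $\mathbb{S}$, it suffices to prove $\bar\nu_h^\sigma\rightharpoonup\delta_0\otimes\mathrm{Unif}(\mathbb{S})$ and then push forward by the continuous map $\psi^{-1}$. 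To establish this I proceed in three steps. Because $H$ is autonomous in \eqref{newsystem} and $e^{H}=g$, the $H$-marginal of $\bar\nu_h^\sigma$ is forced by uniqueness (Lemma \ref{Lem-Erg-Stologequa}) to be the stationary law of $\ln u_g$, which by Part $(i)$ concentrates at $0$; this yields tightness of $\{\bar\nu_h^\sigma\}$ on $\mathbb{R}\times\mathbb{S}$ (the $\mathbb{S}$-factor being compact) and forces every weak limit $\bar\nu^0$ to be supported on $\{0\}\times\mathbb{S}$. Next I pass to the limit in the stationarity identity: testing the generator $\mathcal{A}_\sigma F=(\alpha-\tfrac12\sigma^2-\alpha e^{2H})\partial_H F+e^{2H}\partial_T F+\tfrac12\sigma^2\partial_{HH}F$ against functions $F=F(T)$ depending only on the phase gives $\int e^{2H}\partial_T F\,d\bar\nu_h^\sigma=0$, which I split as
\begin{equation*}
\int_{\mathbb{R}\times\mathbb{S}} e^{2H}\partial_T F\,d\bar\nu_h^\sigma
=\int_{\mathbb{R}\times\mathbb{S}}(e^{2H}-1)\partial_T F\,d\bar\nu_h^\sigma
+\int_{\mathbb{R}\times\mathbb{S}}\partial_T F\,d\bar\nu_h^\sigma .
\end{equation*}
The first term is bounded by $\|\partial_T F\|_\infty\,\mathbb{E}|u_g^2-1|\to 0$ by Part $(i)$, while the second converges to $\int_{\mathbb{S}}\partial_T F\,dm$ (with $m$ the $T$-marginal of $\bar\nu^0$) by weak convergence, $\partial_T F$ being bounded and continuous on the compact $\mathbb{S}$. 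Hence $\int_{\mathbb{S}}\partial_T F\,dm=0$ for all smooth $F$, so $m$ has vanishing distributional derivative, i.e. $m=\mathrm{Unif}(\mathbb{S})$; together with $\operatorname{supp}\bar\nu^0\subset\{0\}\times\mathbb{S}$ this identifies $\bar\nu^0=\delta_0\otimes\mathrm{Unif}(\mathbb{S})$. As the limit is unique, the whole family converges, and pushing forward by $\psi^{-1}$ gives $\nu_h^\sigma\rightharpoonup\tilde\nu_h$.

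The main obstacle is the passage to the limit in the phase equation: the weight $e^{2H}$ is unbounded on $\mathbb{R}$, so weak convergence of $\bar\nu_h^\sigma$ alone does not license replacing it by its limiting value on the support. The decisive point is the $L^1$-estimate $\mathbb{E}|u_g^2-1|\to 0$ from Part $(i)$, which controls the unbounded contribution $\int(e^{2H}-1)\partial_T F\,d\bar\nu_h^\sigma$ and thereby turns the heuristic ``$e^{2H}\to 1$ on the support'' into a rigorous limit. A secondary technical point is verifying that the $H$-marginal of the joint stationary measure is genuinely the unique stationary law of the autonomous $H$-process; this follows from the autonomy of $H$ in \eqref{newsystem} combined with the uniqueness in Lemma \ref{Lem-Erg-Stologequa}.
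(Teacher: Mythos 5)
Your proof is correct, but it follows a genuinely different route from the paper's. The paper proves both parts by a soft compactness argument (its Lemma \ref{limitmeasure1}): tightness of $\{\mu_Q^\sigma\}$ and $\{\nu_h^\sigma\}$ via the Lyapunov function $V(x)=\|x\|^2$, the fact (imported from the reference [jiang2]) that every subsequential weak limit is an invariant measure of the deterministic flow $\Psi$, the Poincar\'e recurrence theorem to place the limit on the Birkhoff center $\mathbb{S}^2_+\cup\{O\}$, and rather delicate estimates on the density $p_\alpha^\sigma$ to exclude mass at $O$; the supports $\overline{\mathcal{L}(Q)}\cap\mathbb{S}^2_+=\{Q\}$ and $\Lambda(h)\cap\mathbb{S}^2_+=\Gamma(h)$ then identify the limits, with the Haar measure in (ii) forced by invariance on a periodic orbit. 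You instead argue quantitatively: in (i) the explicit density \eqref{den} shows $u_g^2$ is Gamma$(\beta-\tfrac12,\beta)$ with $\beta=\alpha/\sigma^2$, so $\mathrm{Var}(u_g^2)\to0$ and $u_g\to1$ in probability (this also silently corrects a typo in the paper's normalizing constant $C_\alpha$, which should be $2\beta^{\beta-1/2}/\Gamma(\beta-\tfrac12)$; the shape of the density, which is all you use, is right); in (ii) you exploit the skew-product structure $(H,T)$ from Proposition \ref{uniqueness}, identify the $H$-marginal exactly as the law of $\ln u_g$ by autonomy plus uniqueness, and derive the uniform phase distribution by passing to the limit in the stationarity identity. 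Your approach buys self-containedness and sharpness: the exact marginal identification replaces both the tightness lemma and the paper's density estimates near $O$ (no mass can escape to $O$ or to infinity because the $H$-marginal is pinned), no recurrence theorem or external invariance result is needed, and part (i) even comes with a rate. The paper's approach buys robustness: it treats all stationary measures by one template and does not rely on the special autonomy of $H$ or on having explicit densities. One point you should spell out: the identity $\int e^{2H}\partial_T F\,d\bar\nu_h^\sigma=0$ is not immediate from ``testing the generator,'' since $e^{2H}\partial_TF$ is unbounded and $F(T)$ need not lie in the domain of the strong generator; it is, however, rigorously obtained from the pathwise formula $F(T_t)-F(T_0)=\int_0^t F'(T_s)e^{2H_s}ds$, stationarity, and Fubini, the required integrability being exactly $\int e^{2H}d\bar\nu_h^\sigma=\mathbb{E}[u_g^2]<\infty$ — the same quantity your splitting argument already controls.
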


Let us first show the tightness of stationary measures.

\begin{lemma} (Tightness) \label{limitmeasure1}
Fix $\alpha>0$ and suppose that $\sigma^2< 2\alpha$.
Then both $\{\mu_Q^{\sigma}\}_{\sigma}$ and $\{\nu_h^{\sigma}\}_{\sigma}$ are tight on $\mathcal{P}(\mathbb{R}_+^3)$.

Moreover, if $\mu_Q^{\sigma_i}\stackrel{w}{\rightharpoonup} \mu$
and $\nu_h^{\sigma_j} \stackrel{w}{\rightharpoonup} \nu$ as $\sigma_j \rightarrow 0$,
then both $\mu$ and $\nu$ are invariant measures of system \eqref{sys6}, and $\mu(\mathbb{S}^2_+) =\nu(\mathbb{S}^2_+)=1$.
\end{lemma}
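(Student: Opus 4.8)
The plan is to reduce every estimate to the one–dimensional logistic equation \eqref{sysg} through the decomposition formula \eqref{sdf}. Write $\rho^\sigma$ for either $\mu_Q^\sigma$ or $\nu_h^\sigma$. The observation I would record first is that the radial marginal of $\rho^\sigma$, i.e.\ the law of $x\mapsto\|x\|$ under $\rho^\sigma$, coincides with the law of the random equilibrium $u_g$ of \eqref{sysg}. For $\mu_Q^\sigma=\mathbb{P}\circ(u_gQ)^{-1}$ this is immediate, since $Q\in\mathbb{S}^2_+$ forces $\|u_gQ\|=u_g$; for $\nu_h^\sigma$ it follows from the conjugacy $\psi(z)=(\ln\|z\|,\varphi(y))$ built in the proof of Proposition \ref{uniqueness}, under which the radial variable $H=\ln\|z\|$ obeys the autonomous logistic dynamics in \eqref{newsystem}, so the $H$–marginal of $\nu_h^\sigma$ is the unique logistic stationary law, namely the law of $\ln u_g$. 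Consequently
\[
\int_{\mathbb{R}^3_+}\|x\|^2\,d\rho^\sigma(x)=\mathbb{E}\big[u_g^2\big]=1-\frac{\sigma^2}{2\alpha}\le 1,\qquad \sigma^2<2\alpha,
\]
where the middle equality is a direct computation with the density $p_\alpha^\sigma$ in \eqref{den}. This uniform second–moment bound yields tightness of $\{\mu_Q^\sigma\}_\sigma$ and $\{\nu_h^\sigma\}_\sigma$ at once, via Markov's inequality $\rho^\sigma(\{\|x\|>R\})\le R^{-2}$; in particular every subsequential weak limit is a probability measure on $\mathbb{R}^3_+$.

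For the invariance assertion, let $\rho$ denote a subsequential weak limit ($\mu$ or $\nu$) as $\sigma\to0$. To show $\rho$ is invariant for the deterministic flow $\Psi$, I would pass to the limit in the stationarity identity $\int f\,d\rho^\sigma=\int P^\sigma_t f\,d\rho^\sigma$, valid for all $f\in C_b(\mathbb{R}^3_+)$ and $t\ge 0$. Taking $g_0=1$ in \eqref{sdf} gives $\Phi(t,\omega,x)=g(t,\omega,1)\,\Psi\!\big(\int_0^t g^2(s,\omega,1)\,ds,\,x\big)$, and letting $\sigma\to0$ in the explicit formula \eqref{solug} shows $g(t,\omega,1)\to 1$ and $\int_0^t g^2(s,\omega,1)\,ds\to t$ for a.e.\ $\omega$, locally uniformly in $(t,x)$. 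Hence $\Phi(t,\cdot,x)\to\Psi(t,x)$ and, by dominated convergence, $P^\sigma_t f\to f\circ\Psi_t$ locally uniformly in $x$ with $\|P^\sigma_t f\|_\infty\le\|f\|_\infty$. Combining this locally uniform convergence of the integrands with the tightness of $\{\rho^\sigma\}$ and the weak convergence $\rho^\sigma\stackrel{w}{\rightharpoonup}\rho$, I can let $\sigma\to0$ on both sides to obtain $\int f\,d\rho=\int f\circ\Psi_t\,d\rho$ for every $f\in C_b(\mathbb{R}^3_+)$ and $t\ge0$, i.e.\ $\Psi_t^*\rho=\rho$. Thus $\rho$ is an invariant measure of \eqref{sys6}.

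To identify the support I would use the radial marginal once more. Since $\|x\|$ has the law of $u_g$ under $\rho^\sigma$,
\[
\int_{\mathbb{R}^3_+}\big(\|x\|^2-1\big)^2\,d\rho^\sigma(x)=\mathbb{E}\big[(u_g^2-1)^2\big],
\]
and because $u_g^2$ is, by \eqref{den}, a Gamma variable with mean $1-\frac{\sigma^2}{2\alpha}$ and variance tending to $0$ as $\sigma\to0$, the right–hand side tends to $0$. As $x\mapsto(\|x\|^2-1)^2$ is nonnegative and continuous, weak lower semicontinuity along $\rho^\sigma\stackrel{w}{\rightharpoonup}\rho$ gives $\int(\|x\|^2-1)^2\,d\rho\le\liminf_{\sigma\to0}\int(\|x\|^2-1)^2\,d\rho^\sigma=0$. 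Therefore $\rho$ is concentrated on $\{x\in\mathbb{R}^3_+:\|x\|=1\}=\mathbb{S}^2_+$, that is, $\mu(\mathbb{S}^2_+)=\nu(\mathbb{S}^2_+)=1$.

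The one–dimensional moment estimates are routine; the main obstacle is the invariance step, where the $\sigma\to0$ limit of the stochastic semigroups $P^\sigma_t$ must be interchanged with integration against the moving stationary measures $\rho^\sigma$. The decomposition formula is what makes this feasible, as it reduces $\Phi\to\Psi$ to the elementary convergence $g(t,\omega,1)\to 1$; the delicate point is to secure the \emph{locally uniform} (rather than merely pointwise) convergence $P^\sigma_t f\to f\circ\Psi_t$, so that it can be coupled with tightness to justify the exchange of limits. An alternative that circumvents this point is the infinitesimal route: the stationarity of $\rho^\sigma$ gives $\int\mathscr{L}^\sigma f\,d\rho^\sigma=0$ for $f\in C_c^\infty$, and since $\mathscr{L}^\sigma f=\langle\nabla f,b\rangle+\tfrac{\sigma^2}{2}\sum_{i,j}x_ix_j\partial^2_{ij}f$, passing $\sigma\to0$ yields $\int\langle\nabla f,b\rangle\,d\rho=0$, whence invariance follows from the standard infinitesimal–invariance characterization. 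Likewise, for the support statement one may, once invariance is known, invoke the Poincaré recurrence theorem together with Lemma \ref{dglobal}: as $\mathbb{S}^2_+$ is the global attractor in $\mathbb{R}^3_+\setminus\{O\}$, every point off $\mathbb{S}^2_+\cup\{O\}$ is wandering and carries no invariant mass, while $\rho(\{O\})=0$ follows from $\mathbb{E}[(u_g^2-1)^2]\to0$.
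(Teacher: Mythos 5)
Your proof is correct, but it takes a genuinely different route from the paper's. The paper proves tightness via the Lyapunov function $V(x)=\|x\|^2$ and a uniform-in-$\sigma$ bound on $\mathscr{L}^{\sigma}V$, citing Theorem 3.1 of \cite{jiang2} both for the tail estimate and for the invariance of weak limits; for the support statement it invokes the Poincar\'e recurrence theorem to place any limit on the Birkhoff center $\mathbb{S}^2_+\cup\{O\}$ (using Lemma \ref{dglobal}) and then excludes mass at $O$ through a rather delicate analysis of the unimodal densities $p_{\alpha}^{\sigma}$ near zero. You instead exploit the exact radial structure: since $\|Q\|=1$ and $\Gamma(h)\subset\mathbb{S}^2_+$, the law of $\|x\|$ under either $\mu_Q^{\sigma}$ or $\nu_h^{\sigma}$ is precisely the law of $u_g$ (for the cone measures this uses the conjugacy from Proposition \ref{uniqueness}, the autonomy of the radial logistic component, and the uniqueness of the one-dimensional stationary law, which is available from \eqref{g1}). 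This gives the uniform bound $\int\|x\|^2\,d\rho^{\sigma}=\mathbb{E}[u_g^2]=1-\sigma^2/(2\alpha)\le 1$, hence tightness by Chebyshev, and then concentration on $\mathbb{S}^2_+$ in one stroke from $\mathbb{E}[(u_g^2-1)^2]\to 0$ plus lower semicontinuity of nonnegative continuous integrands under weak convergence --- which simultaneously kills mass at $O$ and off the sphere, bypassing both the recurrence theorem and the density asymptotics. Your invariance step is likewise self-contained (either via the decomposition formula and $g(t,\omega,1)\to 1$, or via infinitesimal invariance), where the paper again defers to \cite{jiang2}. What the paper's route buys is robustness: the Lyapunov and recurrence machinery does not require the radial marginal to be explicitly identifiable, so it survives perturbations for which the decomposition formula fails; what your route buys is a shorter, quantitative, fully self-contained argument. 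Two points deserve care in a final write-up: the locally uniform (not merely pointwise) convergence $P_t^{\sigma}f\to f\circ\Psi_t$, needed to exchange the $\sigma\to 0$ limit with integration against the moving measures $\rho^{\sigma}$ (your dominated-convergence argument over compact sets does work, and your infinitesimal alternative avoids the issue entirely), and the justification that the $H$-marginal of a stationary measure of the pair $(H,T)$ is stationary for the autonomous $H$-dynamics, which is what licenses the identification with the law of $\ln u_g$.
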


\begin{proof}
Let $C^*:=\sup_{\sigma\in (0, \sqrt{2\alpha})}\sup_{x\in \mathbb{R}^3_+}\mathscr{L}^{\sigma}V(x)$ and define
$U_R^c:=\{x\in \mathbb{R}^3_+: \|x\|>R\}.$
Since the Lyapunov function $V$ associated to \eqref{sys5} satisfies \eqref{lya1},
in view of the proof of Theorem 3.1 in \cite{jiang2},
it follows that any stationary measure $\tilde{\mu}$ satisfies
$\tilde{\mu}(U_R^c)\leq {C^*}/{A_R},$
where $A_R= R^2(-2\alpha R^2+4\alpha).$
In particular, for the stationary measure $\mu_Q^{\sigma}$,
$$\sup_{\sigma\in (0, \sqrt{2\alpha})}\mu_Q^{\sigma}(U_R^c)\leq \frac {C^*} {A_R}\rightarrow 0, \ \ as\ R\rightarrow \infty.$$
Hence, $\{\mu_Q^{\sigma}\}$ is tight.
Similar arguments also give the tightness of $\{\nu_h^{\sigma}\}$.
Moreover, the invariance of $\mu$ and $\nu$ follows from Theorem 3.1 in \cite{jiang2}.

Below we prove that $\mu(\mathbb{S}^2_+)=\nu(\mathbb{S}^2_+)=1$.
To this end, by the Poincar\'e recurrence theorem
(see \cite[Theorem A.1]{jiang2}), one has $\mu(B(\Psi))=\nu(B(\Psi))=1$,
where
$$B(\Psi):=\overline{\{x\in \mathbb{R}^3_+: x\in\omega_d(x)\}}$$
is the Birkhoff center of $\Psi$.
Note that,
by Lemma \ref{dglobal}, $B(\Psi)= \mathbb{S}^2_+\bigcup \{O\}$.
Thus, we only need to prove that $\mu(\{O\})=\nu(\{O\})=0$.

We first prove that $\nu(\{O\})=0$.
For this purpose,
we recall from the proof of Propositions \ref{existence} and \ref{uniqueness} that there exists $x\in \Lambda(h)\setminus\{O\}$ such that $\nu_h^{\sigma_i}=\nu_x^{\sigma_i}$ and \eqref{es1} holds. Then we have
\begin{equation}\label{es2}
\nu(\{O\})=\lim\limits_{R\rightarrow0}\nu(B_R)\leq  \lim\limits_{R\rightarrow0}\liminf\limits_{\sigma_i \rightarrow 0}\nu_x^{\sigma_i}(B_R) \leq \lim\limits_{R\rightarrow0}\liminf\limits_{\sigma_i \rightarrow 0}\int_0^{\frac Rc}p_{\alpha}^{\sigma_i}(s)ds,
\end{equation}
where $c$ is the positive lower bound in \eqref{ps},
which is independent of $\sigma_i$.

Note that, for $\sigma_i>0$ very small, $p_{\alpha}^{\sigma_i}$
satisfies the following properties:
\begin{enumerate}
  \item[(a)] $p_{\alpha}^{\sigma_i}(0)=0$;

  \item[(b)] $p_{\alpha}^{\sigma_i}(s)$ is increasing for $0<s<s_*(b):=\sqrt{1- 1/b}$
  with $b:={\alpha}/{\sigma_i^2}$,
  but decreasing for $s^2>s_*(b)$.
  It reaches the maximum at $s_*(b)$.

  \item[(c)] $\mathop{\lim}_{b\rightarrow\infty}\mathop{max}\limits_{s} p_{\alpha}^{\sigma_i}(s)=\mathop{\lim}\limits_{b\rightarrow\infty}\frac{2b^{0.5}(b-1)^{b-1}}{\Gamma(b-\frac12)}e^{1-b}=\infty$ and $\mathop{\lim}\limits_{b\rightarrow\infty}s_*(b)=1$;

  \item[(d)] $\int_0^{\infty}p_{\alpha}^{\sigma_i}(s)ds=1$, $\forall \,\,\sigma_i>0$.
\end{enumerate}

For any $\epsilon>0$,
take $b$ large enough (or $\sigma_i$ very small) such that $|s_*(b)-1|<\epsilon$,
and for sufficiently small $R$, choose $M>0$ such that $M<1-2\epsilon$ and $R/c<M$.
Then, $p_{\alpha}^{\sigma_i}$ is increasing on $[0, M]$, and by properties (b) and (d),
$$|R/c-M|p_{\alpha}^{\sigma_i}(R/c)\leq \int_{\frac Rc}^Mp_{\alpha}^{\sigma_i}(s)ds<1,$$
so $p_{\alpha}^{\sigma_i}(R/c)<1/{|R/c-M|}$.
This yields that
$$\int_0^{\frac Rc}p_{\alpha}^{\sigma_i}(s)ds\leq \frac Rc p_{\alpha}^{\sigma_i}(R/c)< \frac R{c|R/c-M|}.$$
Thus, plugging this into \eqref{es2}
and passing to the limit $R, \sigma_i\to 0$
we have
$$\nu(\{O\})=\lim\limits_{R\rightarrow0} \frac R{c|R/c-M|}=0,$$
as claimed.

Similarly, by \eqref{esSS},
$$\mu(\{O\})=\lim\limits_{R\rightarrow0}\mu(B_R)\leq\lim\limits_{R\rightarrow0}\liminf\limits_{\sigma_i \rightarrow 0}\int_0^{\frac R{\|Q\|}}p_{\alpha}^{\sigma_i}(s)ds.$$
Arguing as above we get $\mu(\{O\})=0,$
thereby finishing the proof.
\end{proof}

\begin{proof}[Proof of Theorem \ref{limitmeasure}]
$(i)$ By Lemma \ref{limitmeasure1}, for any equilibrium
$Q\in \mathcal{E}\setminus \{O\}$ and for any sequence $\{\sigma_n\}$
converging to zero,
there exist a subsequence (still denoted by $\{\sigma_n\}$)
and $\mu \in \mathcal{P}(\mathbb{R}_+^3)$ such that
\begin{equation}\label{weak limit}
\mu_Q^{\sigma_n} \stackrel{w}{\rightharpoonup}\mu \ \  as \; \sigma_n \rightarrow 0.
\end{equation}

Moreover, since $\mu_Q^{\sigma_n}$ is supported on $\overline{\mathcal{L}(Q)}$ due to Proposition \ref{thmes},
$n\geq 1$,
\eqref{weak limit} yields that the support of $\mu$ is contained in  $\overline{\mathcal{L}(Q)}$. But by Lemma \ref{limitmeasure1}, $\mu(\mathbb{S}^2_+)=1$.
Hence, $\textrm{supp} (\mu) \subseteq \overline{\mathcal{L}(Q)}\cap \mathbb{S}^2_+=\{Q\}$, and so $\mu=\delta_Q$.
Thus, the limit in \eqref{weak limit} is unique,
we infer that \eqref{weak limit} is valid for any sequence $\sigma_n\rightarrow 0$.
The first statement $(i)$ holds.

$(ii)$ Applying Lemma \ref{limitmeasure1} again,
for any sequence $\sigma_n\rightarrow 0$, there exists a subsequence (still denoted by $\{\sigma_n\}$) such that
\begin{equation*}
\nu_h^{\sigma_n} \stackrel{w}{\rightharpoonup}\nu\ \ as\ \sigma_n \rightarrow 0,
\end{equation*}
and $\nu(\mathbb{S}^2_+)=1$.
But by Proposition \ref{uniqueness},
$\textrm{supp}(\nu_h^{\sigma_n}) \subseteq\Lambda(h)$, $n \geq 1$, and so $\textrm{supp}(\nu) \subseteq\Lambda(h)$.
It follows that
$\textrm{supp}(\nu)\subseteq\Lambda(h)\cap \mathbb{S}^2_+= \Gamma(h)$. 
Taking into account that $\nu$ is an invariant measure we infer that $\textrm{supp}(\nu)=\Gamma(h)$,
and so $\nu=\tilde{\nu}_h$ is a Haar measure on $\Gamma(h)$.
Thus, as in the proof of (i),
since the limit is unique, the statement (ii) holds.
\end{proof}

\section{Proof of main results}   \label{Sec-Stocha-Kol}

We are now ready to prove Theorem \ref{bifurcations} and give the complete classification of global dynamics from the perspective of ergodic stationary measures and pull-back $\Omega$-limit sets for the stochastic Kolmogorov system.

\subsection{Stochastic bifurcations}\label{Stochastic bifurcation}

This Subsection is devoted to proving 
the bifurcation of ergodic stationary measures 
in  Theorem \ref{bifurcations}.

Let us first calculate the Lyapunov exponents of ergodic stationary measures associated with random equilibria. Recall that $\mathcal{E}$ denotes the set of all equilibria of system \eqref{sys6}, $\mathcal{P}_e(\mathbb{R}_+^3)$ is the set of all ergodic stationary measures of system \eqref{sys5} and $\mathcal{A}(Q)$ is the attracting domain of some equilibrium $Q$.
\begin{lemma}\label{lyapunov} (Lyapunov exponents)
Let $\lambda_i(\nu)$, $i=1, 2, 3$, denote the Lyapunov exponents of
the stationary measure $\nu$, where
$\nu\in \{\delta_O, \mu_{\textbf{e}_1}, \mu_{\textbf{e}_2}, \mu_{\textbf{e}_3}\}$.
Then, the following holds:
\begin{itemize}
  \item[(i)] $\lambda_i(\delta_O)=\alpha-\frac12\sigma^2$, $i=1, 2, 3$.
  \item[(ii)] If $\sigma^2<2\alpha$,
  then
      \begin{align*}
       \lambda_1(\mu_{\textbf{e}_i})&= -2(\alpha-\frac12\sigma^2),         \\
       \lambda_2(\mu_{\textbf{e}_i})&=(-1)^{i+1} \frac{\alpha+n_i}{\alpha} (\alpha-\frac12\sigma^2), \\
       \lambda_3(\mu_{\textbf{e}_i})&=(-1)^i \frac{\alpha+m_i}{\alpha}(\alpha-\frac12\sigma^2), \end{align*}
        where,
  $n_1=d_1$ and $m_1=d_2$ if $i=1$,
                $n_2=d_1$ and $m_2=d_3$ if $i=2$,
                $n_3=d_2$ and $m_3=d_3$ if $i=3$.

\end{itemize}
\end{lemma}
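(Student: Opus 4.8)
The plan is to read off each Lyapunov exponent directly from the linearized (variational) flow of \eqref{sys5} along the relevant random equilibrium, taking advantage of the fact that at the coordinate equilibria the linearization decouples into scalar linear SDEs that can be integrated explicitly. Recall that for an It\^o system $dx = b(x)\,dt + \Sigma(x)\,dW_t$ the variational equation is $dv = Db(x_t)v\,dt + D\Sigma(x_t)v\,dW_t$; here the multiplicative noise $\Sigma(x)=\sigma(x_1,x_2,x_3)^\top$ has the especially simple Jacobian $D\Sigma\equiv\sigma I$, so the noise contributes the same scalar factor in every direction and never mixes components.

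For part (i) I would linearize at $O$. Since every quadratic term of the drift vanishes at $O$, one has $Db(O)=\alpha I$, and together with $D\Sigma=\sigma I$ the variational equation splits into three copies of $dv=\alpha v\,dt+\sigma v\,dW_t$. It\^o's formula gives $v(t)=v(0)\exp\{(\alpha-\tfrac12\sigma^2)t+\sigma W_t\}$, whence $\lambda_i(\delta_O)=\lim_{t\to\infty}\tfrac1t\ln|v(t)|=\alpha-\tfrac12\sigma^2$ by the strong law $W_t/t\to0$. No restriction on $\sigma$ is needed.

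For part (ii), fix $i$ and recall from Proposition \ref{thmes} that the random equilibrium carrying $\mu_{\textbf{e}_i}$ is the trajectory $t\mapsto u_g(\theta_t\omega)\textbf{e}_i$, which stays on the $x_i$-axis with coordinate $g(s):=u_g(\theta_s\omega)=g(s,\omega,u_g(\omega))$. The crucial observation is that along this axis both $Db$ and $D\Sigma$ are diagonal: every off-diagonal partial of $b$ carries a factor of a vanishing coordinate, so $Db(u_g\textbf{e}_i)$ is diagonal with tangential entry $\alpha-3\alpha g^2$ and two transverse entries, each of the form $\alpha+\kappa g^2$ with $\kappa$ equal to $+d_j$ or $-(2\alpha+d_j)$ depending on the axis. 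The variational equation therefore again decouples into three scalar linear SDEs, and the exponent in the direction with drift coefficient $c(g)$ is
\[
\lambda=\lim_{t\to\infty}\frac1t\int_0^t\Big(c\big(g(s)\big)-\tfrac12\sigma^2\Big)\,ds+\lim_{t\to\infty}\frac{\sigma W_t}{t}.
\]

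The last step is to evaluate the time averages. Each $c(g)$ is affine in $g^2$, so everything reduces to $\lim_{t\to\infty}\tfrac1t\int_0^t g^2(s)\,ds$, which by Lemma \ref{time1}, equation \eqref{b2.1}, equals $\tfrac1\alpha(\alpha-\tfrac12\sigma^2)$ almost surely, while $\sigma W_t/t\to0$. Feeding this average into the tangential coefficient $\alpha-3\alpha g^2$ yields $\lambda_1(\mu_{\textbf{e}_i})=-2(\alpha-\tfrac12\sigma^2)$, and feeding it into a transverse coefficient $\alpha+\kappa g^2$ gives $\tfrac{\alpha+\kappa}{\alpha}(\alpha-\tfrac12\sigma^2)$, which collapses to $+\tfrac{\alpha+d_j}{\alpha}(\alpha-\tfrac12\sigma^2)$ when $\kappa=d_j$ and to $-\tfrac{\alpha+d_j}{\alpha}(\alpha-\tfrac12\sigma^2)$ when $\kappa=-(2\alpha+d_j)$. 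I do not expect a genuine analytic obstacle: the scalar structure of the noise makes $D\Sigma$ a scalar matrix and the diagonality of $Db$ on the axes turns the Oseledets spectrum into three explicitly integrable scalar exponents. The only point demanding care is the combinatorial bookkeeping of which coefficient $d_j$ appears with which sign on each of the three axes; tracking this is exactly what produces the alternating factors $(-1)^{i+1}$ and $(-1)^i$ and the labels $(n_i,m_i)$ in the statement.
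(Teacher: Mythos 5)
Your proposal is correct and follows essentially the same route as the paper's proof: linearize along the stationary trajectory, use the diagonality of the drift Jacobian on each coordinate axis (together with the scalar noise Jacobian $\sigma I$) to decouple the variational equation into three scalar linear SDEs, solve them explicitly, and evaluate the time average of $u_g^2(\theta_s\omega)$, which produces exactly the stated exponents and sign pattern. The only cosmetic differences are that the paper removes the noise via the integrating factor $z(t)=\exp\{\tfrac12\sigma^2 t-\sigma W_t\}$ before integrating and computes $\lim_{t\to\infty}\tfrac1t\int_0^t u_g^2(\theta_s\omega)\,ds=\tfrac1\alpha(\alpha-\tfrac12\sigma^2)$ by the Birkhoff--Khintchin theorem with the explicit formula for $u_g$, whereas you solve each scalar SDE directly by It\^o's formula and cite Lemma \ref{time1} (which indeed applies with $x=u_g(\omega)$, since its full-measure set $\Omega^*$ is independent of the initial value).
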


\begin{proof}
$(i)$ Let $v\in \mathbb{R}_+^3 \setminus \{O\}$.
Consider the linearization $v_t:=D\Phi(t,\omega, x)v$
of system \eqref{sys5}.
Note that $v_t$ satisfies the linear SDE
\begin{equation}\label{linearized}
dv_t=F(\Phi(t, \cdot,  x))v_tdt+\sigma v_tdW_t,
\end{equation}
where

$$F=\scriptsize{\left(
                     \begin{array}{ccc}
                       \alpha-3\alpha x_1^2-(2\alpha+d_1)x_2^2+d_2x_3^2 & -2(2\alpha+d_1)x_1x_2 & 2d_2x_3x_1\\
                       2d_1x_1x_2 & \alpha+d_1x_1^2-3\alpha x_2^2-(2\alpha+d_3)x_3^2 & -2(2\alpha+d_3)x_2x_3 \\
                       -2(2\alpha+d_2)x_1x_3 & 2d_3x_2x_3 & \alpha-(2\alpha+d_2)x_1^2+d_3x_2^2-3\alpha x_3^2 \\
                     \end{array}
                   \right)}.
$$
Note that, using the transform
\begin{align}  \label{u-zv}
   u(t):=z(t)v(t)\ \ \textrm{with}\ z(t):=\exp\{\frac12\sigma^2t-\sigma W_t\}
\end{align}
we can reformulate \eqref{linearized} as follows
\begin{equation}\label{linearized1}
du= F(\Phi(t, \omega, x)) udt, \ \  u(0)=v.
\end{equation}

Below, we solve \eqref{linearized1} to compute the corresponding Lyapunov exponents.
For $\delta_O$, note that
\begin{equation}\label{matrix1}
F(\Phi(t, \omega, O))=\left(
\begin{array}{ccc}
\alpha &0 & 0 \\
 0 & \alpha &0 \\
   0& 0 & \alpha \\
    \end{array}
    \right),
\end{equation}
and \eqref{linearized1} has the unique solution
$$u(t)=\exp(\alpha t)v,$$
which, via \eqref{u-zv},  yields that \eqref{linearized} has the solution
$$D\Phi(t, \omega, x)v=\exp(-\frac12 \sigma^2 t+\sigma W_t)\exp(\alpha t) v.$$
Hence, we compute that for $i=1, 2, 3$
and any $v\in \mathbb{R}_+^3 \setminus  \{O\}$,
\begin{align*}
\lambda_i(\delta_O)
=& \displaystyle\lim\limits_{t\rightarrow \infty} \frac1t \log \|D\Phi(t,\omega, x)v\|  \\
=& -\frac12 \sigma^2+ \displaystyle\lim\limits_{t\rightarrow \infty} \frac1t \log\|\exp(\alpha t)v\|
= -\frac12 \sigma^2+\alpha.
\end{align*}

$(ii)$ 
Concerning the measure $\mu_{\textbf{e}_1}$, 
note that
\begin{equation*}\label{matrix2}
F(\Phi(s, \omega, u_g(\omega)\textbf{e}_1))= \left(\tiny{\begin{array}{ccc}
  \alpha-3\alpha u_g^2(\theta_s\omega) &0 & 0 \\
     0 & \alpha+d_1u_g^2(\theta_s\omega) &0 \\
       0& 0 & \alpha-(2\alpha+d_2)u_g^2(\theta_s\omega) \\
            \end{array}}
              \right).
\end{equation*}
Thus, let $v=(1,0,0)^T$. The solution of \eqref{linearized1}   is
$$u(t)=v \exp(\int_0^t \alpha-3\alpha u_g^2(\theta_s\omega)ds),$$
which, via \eqref{u-zv}, yields that
$$D\Phi(t, \omega, x)v=v\exp(-\frac12 \sigma^2 t+\sigma W_t)\exp(\int_0^t \alpha-3\alpha u_g^2(\theta_s\omega)ds).$$
Hence,  
by the Birkhoff-Khintchin ergodic theorem, we have
\begin{equation}\label{u_g_lyapunov}
\begin{split}
\lambda_1(\mu_{\textbf{e}_1})&=\displaystyle\lim\limits_{t\rightarrow \infty} \frac1t \log\|D\Phi(t, \omega, x)v\|\\
 &= \alpha-\frac12 \sigma^2-3\alpha\displaystyle\lim\limits_{t\rightarrow \infty} \frac1t\int_0^t u_g^2(\theta_s\omega)ds\\
&= \alpha-\frac12 \sigma^2 -3\alpha\mathbb{E}u_g^2. 
\end{split}
\end{equation}
Since 
$$u_g^2(\theta_t\omega)=\frac{\psi'(t,\omega)}{2\alpha \psi(t,\omega)},$$
where
$$\psi(t,\omega)=\int_{-\infty}^t\exp\{2(\alpha-\frac12\sigma^2)s+2\sigma W_s(\omega)\}ds,
$$
we compute 
\begin{align}\label{EU}
\mathbb{E}(u_g^2)=\displaystyle \lim_{s\rightarrow \infty} \frac1s\int_0^s u_g^2(\theta_t\omega)dt
=\frac1{2\alpha}\displaystyle \lim_{s\rightarrow \infty}\frac1s\log \psi(s)
= \frac{1}{\alpha}(\alpha-\frac12\sigma^2).
\end{align}
Plugging this into \eqref{u_g_lyapunov} we get
$$\lambda_1(\mu_{\textbf{e}_1})=-2(\alpha-\frac12\sigma^2).$$

Similarly, taking $v=(0,1,0)^T$ we have
$$\lambda_2(\mu_{\textbf{e}_1})= \alpha-\frac12 \sigma^2 +d_1\mathbb{E}u_g^2=\frac{\alpha+d_1}{\alpha}(\alpha-\frac12\sigma^2),$$
and taking $v=(0,0,1)^T$ we have
$$\lambda_3(\mu_{\textbf{e}_1})= \alpha-\frac12 \sigma^2 -(2\alpha+d_2)\mathbb{E}u_g^2=-\frac{\alpha+d_2}{\alpha}(\alpha-\frac12\sigma^2).$$

The proof for the remaining cases where $Q\in \{\textbf{e}_2, \textbf{e}_3\}$
is similar.
\end{proof}

\medskip
{\bf Proof of Theorem \ref{bifurcations}}
(i)-(ii): Note that $O$ is a random equilibrium and $\delta_O$ is an ergodic stationary measure. Then by lemma \ref{lyapunov}, the Lyapunov exponents of $\delta_O$ are all negative when $ \sigma^2>2\alpha$, and all zero when $\sigma^2=2\alpha$. Moreover, by Theorem \ref{attract} (i), the random equilibrium $O$ is a global attractor. Hence, it remains to prove that $\delta_O$ is the unique ergodic stationary measure of system \eqref{sys5}.

For this purpose, first note that by Theorem \ref{attract} (i), for any $x\in \mathbb{R}^3_+$, $\Phi(t, \theta_{-t}\omega, x)\rightarrow O$ almost surely.
Then, 
using the Lebesgue-dominated convergence theorem and the invariance of $\theta_t$ under $\mathbb{P}$ we derive that for any $f\in C_b(\mathbb{R}^3_+)$,
\begin{align*}
\displaystyle\lim\limits_{t\rightarrow \infty}\int_{\mathbb{R}^3_+}f(z)P(t,x, dz)
= \displaystyle\lim\limits_{t\rightarrow \infty}\int_{\Omega}f(\Phi(t, \theta_{-t}\omega, x))\mathbb{P}(d\omega)
=\int_{\mathbb{R}^3_+}f(z)\delta_O (dz),
\end{align*}
which yields that
\begin{equation}\label{es 1}
\displaystyle\lim\limits_{t\rightarrow \infty} P(t, x, \cdot)\rightarrow \delta_O  \ \  \textrm{weakly} \,\, \textrm{in} \,\, \mathcal{P}(\mathbb{R}^3_+).
\end{equation}

Now assume that $\nu\in \mathcal{P}(\mathbb{R}^3_+)$ is another ergodic stationary measure such that $\nu(\cdot)\ne \delta_O(\cdot)$. 
Then, in view of \eqref{es 1}, one has
\begin{equation}\label{es_3}
\int_{\mathbb{R}^3_+}P(t, x, \cdot)\nu(dx)\stackrel{w}{\rightharpoonup}\delta_O(\cdot), \ \ as\ t \rightarrow \infty.
\end{equation}
But by the definition of stationary measures, for any $t\ge 0$, one has $\int_{\mathbb{R}^3_+}P(t, x, \cdot)\nu(dx)=\nu(\cdot)$, which violates \eqref{es_3}. This gives the statements  (i) and (ii).

(iii) 
By lemma \ref{lyapunov}, the Lyapunov exponents of $\delta_O$ are all positive if $\sigma^2<2\alpha$, which implies that $O$ is unstable. Moreover,
by Proposition \ref{pro1} and Theorem \ref{thmes}, system \eqref{sys5} always has three random equilibria $u_g(\omega)\textbf{e}_i$, $i=1,2,3$ when $\sigma^2<2\alpha$. By lemma \ref{lyapunov} again, the sign of the Lyapunov exponents of $u_g(\omega)\textbf{e}_i, i=1,2,3$ depend on the sign of $\alpha+d_i, i=1,2,3.$

(iii.1) When $\prod_{i=1}^3(\alpha+d_i)= 0$, in view of Proposition \ref{pro1} (iii)-(v) $\mathcal{E}$ consists of infinitely many equilibria. Then, by Theorem \ref{thmes}, $\{\mu_Q: Q\in \mathcal{E}\setminus\{O\}\}$ consists of infinitely many ergodic stationary measures, each of which is supported on the ray $\overline{\mathcal{L}(Q)}$.

(iii.2) In view of Proposition \ref{pro1} (i), $\mathcal{E}=\{O, Q^*,\textbf{e}_i, i=1,2,3\}$. Then by Theorem \ref{thmes}, for any $Q\in \mathcal{E}\setminus\{O\}$, $\mu_Q$ is an ergodic stationary measure supported on the ray $\overline{\mathcal{L}(Q)}$.

Moreover, by Theorem \ref{gd1} (i), for each $h\in ({h}^*, \infty)$, there exists a closed orbit $\Gamma(h)$ and an invariant cone $\Lambda(h)$. 
Then, 
in view of Propositions \ref{existence} and \ref{uniqueness}, 
there exists a unique ergodic stationary measure $\nu_h$ on $\Lambda(h)\setminus\{O\}$. Thus, $\{\nu_h, \  h\in ({h}^*, \infty)\}$ consists of infinitely many ergodic stationary measures supported on invariant cones $\Lambda(h)$.

(iii.3) By Proposition \ref{pro1} (ii),  $\mathcal{E}:=\{O, \textbf{e}_i, i=1,2,3\}$. Again
by Theorem \ref{thmes}, $\delta_O, \mu_{\textbf{e}_i}, i=1,2,3$, are ergodic stationary measures supported on $O$ or rays $\overline{\mathcal{L}(\textbf{e}_i)}, i=1,2,3$. 

It remains to prove that $\mathcal{P}_e(\mathbb{R}_+^3)=\{\delta_O, \mu_{\textbf{e}_i}, i=1,2,3\}$. For this purpose, we only need to prove that for any stationary measure $\nu \in \mathcal{P}(\mathbb{R}^3_+)$,
\begin{equation}\label{four-ergodic-measure-1}
\nu(\cdot)=\sum_{Q\in \mathcal{E}}\nu(\mathcal{A}(Q))\mu_Q(\cdot).
\end{equation}
To this end, by the definition of stationary measures and Corollary \ref{decomposition}, 
\begin{equation}\label{four-ergodic-measure}
\begin{split}
\nu(\cdot) 
=\int_{\mathbb{R}^3_+}P(t, x, \cdot)\nu(dx) 
 =\sum_{Q\in \mathcal{E}}\int_{\mathcal{A}(Q)}P(t, x, \cdot)\nu(dx)
\end{split}
\end{equation}
Note that for any $x\in \mathcal{A}(Q)$, one has $P(t, x, \cdot)\stackrel{w}{\rightharpoonup} \mu_Q$ as $t\to \infty$. Thus, letting $t$ go to infinity in \eqref{four-ergodic-measure} we obtain \eqref{four-ergodic-measure-1} and finish the proof.  \hfill $\square$
\medskip

Combining Theorem \ref{bifurcations} and Lemma \ref{lyapunov} we have the following Corollary about hyperbolicity of finite many ergodic stationary measures.

\begin{corollary}\label{hypebolic ergodic measure} When system \eqref{sys5} has only finite many ergodic stationary measures, these measures are all hyperbolic except the case where $\sigma^2=2\alpha$.
\end{corollary}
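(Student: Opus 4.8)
The plan is to go through, one regime at a time, the cases of Theorem \ref{bifurcations} in which the system has only finitely many ergodic stationary measures, and simply read off the signs of the Lyapunov exponents from the explicit formulas in Lemma \ref{lyapunov}. First I would observe that, among the cases of Theorem \ref{bifurcations}, system \eqref{sys5} has only finitely many ergodic stationary measures exactly in the regimes $(i)$ (where $\sigma^2>2\alpha$), $(ii)$ (where $\sigma^2=2\alpha$), and $(iii.3)$ (where $\sigma^2<2\alpha$, $\prod_{i=1}^3(\alpha+d_i)\neq 0$, and $(\alpha+d_i)(\alpha+d_j)<0$ for some $i\neq j$); in cases $(iii.1)$ and $(iii.2)$ there are already infinitely many measures. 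Hence it suffices to verify hyperbolicity in these three regimes.

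In cases $(i)$ and $(ii)$ the unique ergodic stationary measure is $\delta_O$, and Lemma \ref{lyapunov}$(i)$ gives $\lambda_i(\delta_O)=\alpha-\tfrac12\sigma^2$ for $i=1,2,3$. This common value is strictly negative when $\sigma^2>2\alpha$ and strictly positive when $\sigma^2<2\alpha$, so $\delta_O$ is hyperbolic whenever $\sigma^2\neq 2\alpha$; it vanishes precisely at the bifurcation point $\sigma^2=2\alpha$, which is the single exceptional case in the statement.

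It then remains to treat case $(iii.3)$, where $\mathcal{P}_e(\mathbb{R}_+^3)=\{\delta_O,\mu_{\textbf{e}_1},\mu_{\textbf{e}_2},\mu_{\textbf{e}_3}\}$ with $\sigma^2<2\alpha$. For $\delta_O$ the computation above gives $\lambda_i(\delta_O)=\alpha-\tfrac12\sigma^2>0$, so it is hyperbolic. For each $\mu_{\textbf{e}_i}$, Lemma \ref{lyapunov}$(ii)$ yields $\lambda_1(\mu_{\textbf{e}_i})=-2(\alpha-\tfrac12\sigma^2)<0$, while $\lambda_2(\mu_{\textbf{e}_i})$ and $\lambda_3(\mu_{\textbf{e}_i})$ are multiples of $(\alpha-\tfrac12\sigma^2)$ with coefficients proportional to $\alpha+n_i$ and $\alpha+m_i$. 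Since the defining condition $\prod_{j=1}^3(\alpha+d_j)\neq 0$ forces every $\alpha+d_j\neq 0$, and $\{n_i,m_i\}\subset\{d_1,d_2,d_3\}$, both coefficients are nonzero; together with $\alpha-\tfrac12\sigma^2>0$ this gives $\lambda_2(\mu_{\textbf{e}_i})\neq 0$ and $\lambda_3(\mu_{\textbf{e}_i})\neq 0$. Hence all four measures are hyperbolic.

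There is no genuine analytic obstacle here: the corollary is a direct bookkeeping consequence of the explicit Lyapunov-exponent formulas already established. The only point requiring care is ensuring the case enumeration is exhaustive, that is, that the three regimes above really are the only ones producing finitely many ergodic stationary measures and that case $(iii.3)$ always has every $\alpha+d_j\neq 0$. Both facts are exactly what Theorem \ref{bifurcations} supplies, so the argument reduces to invoking that classification and substituting into Lemma \ref{lyapunov}.
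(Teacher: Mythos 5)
Your proposal is correct and is precisely the argument the paper intends: the paper proves this corollary by the one-line remark ``Combining Theorem \ref{bifurcations} and Lemma \ref{lyapunov}'', and your case enumeration (finitely many measures occur exactly in regimes $(i)$, $(ii)$, $(iii.3)$) together with the sign-reading of the explicit Lyapunov-exponent formulas is exactly that combination, spelled out. The only exceptional case you find, $\sigma^2=2\alpha$ where $\lambda_i(\delta_O)=0$, matches the exception in the statement, so nothing is missing.
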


Furthermore, we also have the bifurcation for the density functions of ergodic stationary measures, which is stated in Theorem \ref{3dp}.

For this purpose,
let us first derive the density function of the
ergodic stationary measure $\mu_Q$
related to equilibria.

\begin{lemma}\label{density fun}
Let $\sigma^2<2\alpha$ and $Q=(q_1, q_2, q_3)\in \mathcal{E}\backslash \{O\}$ be any equilibrium of \eqref{sys6}.
Let $j\in\{1, 2, 3\}$ be such that $q_j\neq0$.
Then, the density function of $ \mu_Q$ has the expression
\begin{equation}\label{den3}
 f_Q(x)=
 \begin{cases}
  \frac1{\|Q\|}p_{\alpha}^{\sigma}(\frac{x_j}{q_j}),  \,\, x\in \mathcal{L}(Q), \\
  0, \,\, x\notin \mathcal{L}(Q),
 \end{cases}
\end{equation}
where $p_{\alpha}^{\sigma}$ is given by \eqref{den}.
\end{lemma}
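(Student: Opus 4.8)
The plan is to realize $\mu_Q$ as the push-forward of the one-dimensional law $\mu_g^\sigma=\mathbb{P}\circ u_g^{-1}$ along the linear embedding $\lambda\mapsto\lambda Q$ of $\mathbb{R}_+$ onto the ray $\mathcal{L}(Q)$, and then read off its density by a change of variables. Since $\mu_Q$ is carried by the one-dimensional set $\overline{\mathcal{L}(Q)}$ (Proposition \ref{thmes}(i)), it is singular with respect to three-dimensional Lebesgue measure, so the assertion must be interpreted as giving the density of $\mu_Q$ with respect to the arc-length (one-dimensional Hausdorff) measure $\mathcal{H}^1$ on the ray. In particular $f_Q$ vanishes off $\mathcal{L}(Q)$, which accounts for the second branch of \eqref{den3}, and it remains only to identify $f_Q$ along the ray.

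For the first branch I would parametrize $\mathcal{L}(Q)$ by the scalar factor $\lambda>0$, writing a generic point as $x=\lambda Q$. By Proposition \ref{thmes}, $Q_g(\omega)=u_g(\omega)Q$, so for any Borel set $A\subseteq\mathcal{L}(Q)$ of the form $A=\{\lambda Q:\lambda\in I\}$ one has
$$
\mu_Q(A)=\mathbb{P}(u_g Q\in A)=\mathbb{P}(u_g\in I)=\int_I p_\alpha^\sigma(\lambda)\,d\lambda,
$$
with $p_\alpha^\sigma$ the density of $\mu_g^\sigma$ from \eqref{den}. On the other hand, the arc-length element along the ray is $d\mathcal{H}^1=\|Q\|\,d\lambda$, so the defining relation $\mu_Q(A)=\int_A f_Q\,d\mathcal{H}^1$ becomes $\int_I f_Q(\lambda Q)\,\|Q\|\,d\lambda$. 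Comparing the two expressions for every $I$ forces $f_Q(\lambda Q)=\|Q\|^{-1}p_\alpha^\sigma(\lambda)$.

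Finally I would rewrite $\lambda$ through any nonzero coordinate: if $q_j\neq0$, then $x=\lambda Q$ gives $x_j=\lambda q_j$, hence $\lambda=x_j/q_j$, which yields the stated expression $f_Q(x)=\|Q\|^{-1}p_\alpha^\sigma(x_j/q_j)$ and shows incidentally that the formula does not depend on which nonzero coordinate $j$ is chosen. There is no substantive obstacle here beyond bookkeeping; the only point that genuinely requires care is the interpretation of \emph{density} as a density against the one-dimensional Hausdorff measure on the ray, since this is what makes the Jacobian $\|Q\|$ of the arc-length parametrization enter and produce the prefactor $\|Q\|^{-1}$.
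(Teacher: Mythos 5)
Your proposal is correct and follows essentially the same route as the paper: both realize $\mu_Q$ as the push-forward of the law of $u_g$ under $\lambda\mapsto\lambda Q$ and extract the density on the ray, with the prefactor $\|Q\|^{-1}$ arising as the Jacobian of the arc-length parametrization. The paper phrases this as a limit of increments of the distribution function $F_Q$ divided by $\|\delta x\|$, which is precisely the $\mathcal{H}^1$-density you compute by comparing measures; your explicit remark that the density must be understood relative to one-dimensional Hausdorff measure on $\mathcal{L}(Q)$ makes the same computation, just stated more carefully.
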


\begin{proof}
Let $F_Q$ denote the distribution function of
$Q_g(\omega) = u_g(\omega) Q$, $\omega\in \Omega$.
Then for any $x\in \mathbb{R}_+^3$, we have
\begin{align*}
F_Q(x) =\mathbb{P} (u_g q_i\leq x_i,\; i=1, 2, 3)
 = \int_0^{min\{\frac{x_i}{q_i},\; q_i\neq 0  \}}p_{\alpha}^{\sigma}(s)ds,
\end{align*}
where $p_\alpha^\sigma$ is the density of $\mathbb{P}\circ u_g^{-1}$.
Since $\mu_Q(\mathcal{L}(Q))=1$, its density function is positive only on $\mathcal{L}(Q)$.
Then for any $x\in \mathcal{L}(Q)$,
letting $r={x_i}/{q_i}$, $q_i\neq 0$
we have $x=rQ$ and $\delta x=\delta r Q$,
thus
\begin{align*}
f_Q(x)
  =\displaystyle \lim_{\|\delta x\| \rightarrow 0} \frac{F_Q(x+\delta x)-F_Q(x)}{\|\delta x\| }
 = \frac 1{\|Q\|}\displaystyle \lim_{\delta r\rightarrow 0}\frac{\int_r^{r+\delta r} p_{\alpha}^{\sigma}(s)ds}{\delta r}
 =\frac 1{\|Q\|}p_{\alpha}^{\sigma}(r),
\end{align*}
which is exactly \eqref{den3}.
\end{proof}

Theorem \ref{3dp} shows that the stochastic Kolmogorov system \eqref{sys5} undergoes a stochastic P-bifurcation.

\begin{theorem}\label{3dp}(Stochastic bifurcation of density functions)
Let $\sigma^2<2\alpha$ and $Q \in \mathcal{E}\backslash \{O\}$ be a equilibrium of system \eqref{sys6}.
Let $f_Q$ be the density function corresponding to the ergodic stationary measure $\mu_Q$. Then, $\{f_Q\}$ undergoes a P-bifurcation at $\sigma^2=\alpha$.
\end{theorem}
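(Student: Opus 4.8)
The plan is to reduce the three-dimensional statement to an elementary shape analysis of the one-dimensional density $p_{\alpha}^{\sigma}$ and then track how its profile changes as $\sigma^2$ crosses $\alpha$. By Lemma~\ref{density fun}, on the ray $\mathcal{L}(Q)$ the density $f_Q$ equals $p_{\alpha}^{\sigma}$ precomposed with the affine map $x\mapsto x_j/q_j$, which is a strictly increasing bijection onto $\mathbb{R}_+$. Since such a reparametrization preserves monotonicity and carries interior critical points to interior critical points bijectively, the number and type of modes of $f_Q$, viewed as a density with respect to arc length along $\mathcal{L}(Q)$, coincide with those of $p_{\alpha}^{\sigma}$. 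Hence it suffices to determine how the shape of $p_{\alpha}^{\sigma}$ changes at $\sigma^2=\alpha$.

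First I would set $b:=\alpha/\sigma^2$, so that $\sigma^2<2\alpha$ corresponds to $b>1/2$ and, by \eqref{den}, $p_{\alpha}^{\sigma}(x)=C_{\alpha}\,x^{2b-2}e^{-bx^2}$ for $x>0$. Taking the logarithmic derivative gives
\begin{equation*}
\frac{d}{dx}\log p_{\alpha}^{\sigma}(x)=\frac{2b-2}{x}-2bx=\frac{2}{x}\big((b-1)-b x^2\big),\quad x>0,
\end{equation*}
so any interior critical point must satisfy $x^2=1-1/b$. Thus an interior critical point exists in $\mathbb{R}_+$ if and only if $b>1$, i.e. $\sigma^2<\alpha$, and in that case it is unique, located at $x_*=\sqrt{1-1/b}$.

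Next I would split into the three regimes determined by $b$. When $\sigma^2<\alpha$ (so $b>1$) the exponent $2b-2$ is positive, giving $p_{\alpha}^{\sigma}(0^+)=0$ and $p_{\alpha}^{\sigma}(x)\to0$ as $x\to\infty$; since the logarithmic derivative is positive on $(0,x_*)$ and negative on $(x_*,\infty)$, the density is strictly unimodal with a single interior maximum at $x_*$. When $\sigma^2=\alpha$ (so $b=1$) the power term disappears and $p_{\alpha}^{\sigma}(x)=C_{\alpha}e^{-x^2}$, which has finite positive value $C_{\alpha}$ at the origin and logarithmic derivative $-2x<0$, hence is strictly decreasing. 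When $\alpha<\sigma^2<2\alpha$ (so $1/2<b<1$) the exponent $2b-2$ is negative, so $p_{\alpha}^{\sigma}(0^+)=+\infty$, the equation $x^2=1-1/b$ has no positive root, and both terms of the logarithmic derivative are negative, so the density is again strictly decreasing. Transporting these statements back to $f_Q$ through the order-preserving reparametrization shows that the stationary density is unimodal precisely when $\sigma^2<\alpha$ and monotone decreasing when $\alpha\le\sigma^2<2\alpha$; the loss of the interior mode as $\sigma^2$ increases through $\alpha$ is exactly a stochastic P-bifurcation, which is the assertion.

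I expect the only genuine subtlety, as opposed to the routine derivative computation, to be the bookkeeping of what P-bifurcation means in this singular setting: because $\mu_Q$ is supported on the one-dimensional ray $\mathcal{L}(Q)$, the relevant object $f_Q$ is a density with respect to one-dimensional Hausdorff (arc-length) measure, and the qualitative change must be phrased as a change along this ray rather than in an ambient Lebesgue density on $\mathbb{R}^3_+$. Once this is made precise, the affine change of variables transfers the modality of $p_{\alpha}^{\sigma}$ to $f_Q$ verbatim, and the three-case analysis above completes the argument.
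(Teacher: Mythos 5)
Your proposal is correct and follows essentially the same route as the paper's own proof: reduce to the one-dimensional density $p_{\alpha}^{\sigma}$ via Lemma \ref{density fun}, then observe that $p_{\alpha}^{\sigma}$ is unimodal with $p_{\alpha}^{\sigma}(0)=0$ when $\sigma^2<\alpha$ but monotone decreasing when $\alpha\le\sigma^2<2\alpha$. The only difference is that you make explicit the logarithmic-derivative computation and the arc-length reparametrization that the paper states without detail, which is a welcome but inessential elaboration.
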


\begin{proof}
Without loss of generality, we may assume that $Q=(q_1, q_2, q_3)$ with $q_1\neq0$.
Then, by Lemma \ref{density fun},
\begin{equation}
 f_Q(x)=
 \begin{cases}
 \frac 1{\|Q\|}p^{\sigma}_{\alpha}(x_1/q_1), \; x\in \mathcal{L}(Q), \\
 0, \; x\notin \mathcal{L}(Q).
 \end{cases}
 \end{equation}

Moreover, by \eqref{den}, $ p^{\sigma}_{\alpha}(s)=C_\alpha s^{2\frac{\alpha}{\sigma^2}-2}\exp(-\frac{\alpha}{\sigma^2}s^2)$ for $s\in \mathcal{L}(P)$. Since $p^{\sigma}_{\alpha}(s)$ is decreasing in $s$ with pole at $s=0$ for $\alpha< \sigma^2<2\alpha$, and it is a unimodal function with $p^{\sigma}_{\alpha}(0)=0$
for $\sigma^2<\alpha$,
we derive that the density functions $\{f_Q\}$
 admit a $P$-bifurcation at $\sigma^2=\alpha$.
\end{proof}

\subsection{Classification of pull-back $\Omega$-limit sets}
In this subsection, we first give the proof of Theorem \ref{main-thm-2} 
and then state the complete classification
of pull-back $\Omega$-limit sets for system \eqref{sys5} on $\mathbb{R}_+^3$
in Theorem \ref{class1} below.

Recall that
$\mathcal{L}(y):=\{\lambda y: \lambda>0\}$ denotes the ray passing through the point $y$,
where $y\in \mathbb{R}_+^3$.
We still use the notations ${h}^*$, $Q_x$, $\Gamma_{12}$, $\Gamma_{12}^+$, $\Gamma_{13}$, $\mathcal{L}(Q^*)$ and $\Gamma(h)$
as in Theorem \ref{gd1}.
Let $u_g$ denote the random equilibrium of equation \eqref{sysg},
$\Omega_x(\omega)$ the $\Omega$-limit set of the trajectory
$\{\Phi(t, \theta_{-t}\omega, x)\}$.

\medskip
{\bf Proof of Theorem \ref{main-thm-2}}
(i) By Theorem \ref{attract} (i), for any $x\in \mathbb{R}_+^3$, $\Omega_x(\omega)=\{O\}$. Thus it remains to prove that $O$ is the unique random equilibrium.

For this purpose, assume that there exists another $\mathcal{F}_-$-random equilibrium $V$ such that $V\neq O$ almost surely. Then since $V$ is $\mathcal{F}_-$-measurable, the distribution of $V$, denoted by $\nu$, is a stationary measure satisfying $\nu\neq \delta_O$. But this contradicts the uniqueness of $\delta_O$ in  Theorem 1.1 (i) and (ii).

(ii.1) In view of Corollary \ref{decomposition} (II) and Theorem \ref{attract} (ii), for any $x\in \mathbb{R}^3_+$, there exists $Q\in \mathcal{E}$ such that $\Omega_x(\omega)=\{u_g(\omega)Q\}$. Moreover, by Proposition \ref{pro1} and Theorem \ref{thmes}, $\{u_g(\omega)Q: \  Q\in \mathcal{E}\}$ are all random equilibria.

(ii.1$_a$) Without loss of generality, let us assume that $\alpha+d_1=0$. Then by Proposition \ref{pro1} (iii), $\mathcal{E}=\{O, \textbf{e}_3, Q: Q\in \Gamma_{12}\}$. 
Then the statement follows from the fact that, 
for each noise realization, $\{u_g(\omega)Q: Q\in \Gamma_{12}\}$ form a curve on plane $\{x\in \mathbb{R}^3_+: x_3=0\}$.

The statements in 
(ii.1$_b$)-(ii.1$_c$) can be proved by using similar arguments as in the case (ii.1$_a$).

(ii.2) By Proposition \ref{pro1} (i),  $\mathcal{E}:=\{O, Q^*, \textbf{e}_i, i=1,2,3\}$. Again
by Theorem \ref{thmes}, system \eqref{sys5} has 5 random equilibria $O, u_g(\omega)Q^*, u_g(\omega)\textbf{e}_i, i=1,2,3$.
Now, let us prove the existence of infinitely many Crauel random periodic solutions. 

To this end, first by Theorem \ref{gd1} (i), for each $h\in (h^*, \infty)$, $\Gamma(h)$ is a periodic orbit with a minimum positive period defined by $N(h)$. Then for  $y_0\in \Gamma(h)$ fixed, $\Psi(t, y_0)$ is a periodic solution with period $N(h)$ of system \eqref{sys6}. Let us define the mapping $\psi_h: \Omega \times \mathbb{R}_+\to \mathbb{R}^3_+$ by
\begin{equation}\label{periodic-solution}
\psi_h(t, \omega):=u_g(\omega)\Psi(\int_{-t}^0u_g^2(\theta_s\omega)ds, y_0).
\end{equation}
For any $t_0\in \mathbb{R}_+$, by the stochastic decomposition formula \eqref{sdf} and a change of variables,
\begin{align}\label{check-solution}
\Phi(t, \omega, \psi_h(t_0, \omega))&=g(t, \omega, u_g(\omega))\Psi(\int_0^tu_g^2(\theta_s\omega)ds, \Psi(\int_{-t_0}^0u_g^2(\theta_s\omega)ds, y_0))  \notag \\ 
&=u_g(\theta_t\omega)\Psi(\int_{-t_0}^tu_g^2(\theta_s\omega)ds, y_0)) \notag \\
&=u_g(\theta_t\omega)\Psi(\int_{-(t+t_0)}^0u_g^2(\theta_{s+t}\omega)ds, y_0) \notag \\
&=\psi_h(t+t_0, \theta_t\omega).
\end{align}

Now, define $T: \Omega\to \mathbb{R}$ by
\begin{equation}\label{random-periodic-time}
T_h(\omega):=\inf\{t>0: |\int_{-t}^0u_g^2(\theta_s\omega)ds|=N(h)\}.
\end{equation}
We first show that $0<T_h(\omega)<+\infty$ almost surely. Actually, this follows from
$$\displaystyle \lim_{t\rightarrow \infty}\frac1t\int_{-t}^0 u_g^2(\theta_s\omega)ds=\mathbb{E}u_g^2>0,$$
due to the Birkhoff-Khintchin ergodic theorem.
Then, by a change of variables, we have
$$\int_{-(t+T_h(\theta_{-t}\omega))}^{-t}u_g^2(\theta_s\omega)ds=\int_{-T_h(\theta_{-t}\omega)}^0u_g^2(\theta_{s-t}\omega)ds=N(h),$$
which yields that
\begin{equation}\label{check-periodic}
\begin{split}
\psi_h(t+T_h(\theta_{-t}\omega), \omega)&=u_g(\omega)\Psi(\int_{-(t+T_h(\theta_{-t}\omega))}^0u_g^2(\theta_s\omega)ds, y_0)\\
&=u_g(\omega)\Psi(\int_{-t}^0u_g^2(\theta_s\omega)ds+N(h), y_0)\\
&=\psi_h(t, \omega),
\end{split}
\end{equation}
where the last step was due to the fact that $\Psi(t, y_0)$ is a periodic solution with positive period $N(h)$.

Hence, combining \eqref{check-solution} with \eqref{check-periodic} we derive that for each $h\in (h^*, \infty)$ and fixed $y_0\in \Gamma(h)$, the pair $(\psi_h, T_h)$ defined by \eqref{periodic-solution} and \eqref{random-periodic-time} is a Crauel random periodic solution, and so $\{(\psi_h, T_h): h\in (h^*, \infty)\}$ are infinitely many Crauel random periodic solutions.

Finally, it follows from  Corollary \ref{decomposition} (I) and Theorem \ref{attract} (ii) that for any $x\in \mathbb{R}^3_+$, $\Omega_x(\omega)$ is either $\{u_g(\omega)Q\}$ for some $Q\in \mathcal{E}$ or $\{u_g(\omega)\Gamma(h)\}$ for some $h\in (h^*, \infty)$. Thus, the statements are proved.

(ii.3) By Proposition \ref{pro1} (ii), Theorems \ref{thmes} and \ref{attract} (ii), we derive that, for any $x\in \mathbb{R}^3_+$, $\Omega_x(\omega)=\{u_g(\omega)Q\}$ where $Q\in \{O, \textbf{e}_i, i=1,2,3\}$. Moreover, for any $\mathcal{F}_-$-measurable random equilibrium $V$, $\nu:=\mathbb{P}\circ V^{-1}$ is a stationary measure. Then, applying Theorem \ref{bifurcations} (iii.3) we infer that $\nu$ is a convex combination of $\{\delta_O, \mu_{\textbf{e}_i}, i=1,2,3\}$.   The proof is complete.     \hfill $\square$
\medskip

In the end of this section, 
we give a more detailed classification of the pull-back $\Omega$-limit sets of stochastic system \eqref{sys5} 
corresponding to different locations of initial data, 
The proof follows from Theorems \ref{gd1}  
and \ref{attract}. 

\begin{theorem}\label{class1} (Classification of pull-back $\Omega$-limit sets)
For almost surely $\omega\in \Omega$,
the following holds:
\begin{itemize}
\item [(i)] If $\sigma^2<2\alpha$ and $\alpha+d_i>0$, $i=1, 2, 3$,
    then there are 5 random equilibria:
    $$\{O, u_g\textbf{e}_1, u_g\textbf{e}_2, u_g\textbf{e}_3, u_gQ^*\}.$$

   Further,
       $\Omega_x(\omega)=\{u_g(\omega)\Gamma(h)\}$ if $x\in \Lambda_1(h)$ for any $h\in ({h}^*, \infty)$; $\Omega_x(\omega)=\{u_g(\omega)Q^*\}$ if $x\in \mathcal{L}(Q^*)$; $\Omega_x(\omega)\in \{u_g(\omega)\textbf{e}_i, i=1,2,3\}$ if $x\in \partial\mathbb{R}^3_+\setminus\{O\}$.

\item[(ii)] If $\sigma^2<2\alpha$ and $\alpha+d_1<0, \alpha+d_2>0, \alpha+d_3<0$, then there are 4 random equilibria:
     $$\{O, u_g\textbf{e}_1, u_g\textbf{e}_2, u_g\textbf{e}_3\}.$$
     Moreover, $\Omega_x(\omega)=\{u_g(\omega)\textbf{e}_1\}$  if $x\in \textrm{Int}\mathbb{R}^3_+$; $\Omega_x(\omega)\in\{u_g(\omega)\textbf{e}_i, i=1,2,3\}$ for any $x\in \partial\mathbb{R}^3_+\setminus\{O\}$.

\item[(iii.a)] If $\sigma^2<2\alpha$ and $\alpha+d_1=0, \alpha+d_2>0, \alpha+d_3>0$,
 then there are infinitely many random equilibria:
 $$\{O, u_g\textbf{e}_3\}\bigcup \{u_gQ: Q\in \Gamma_{12}\}.$$
 Moreover, $\Omega_x(\omega)\in \{u_g(\omega)Q: Q\in \Gamma_{12}^+\}$ for any
  $x\in \textrm{Int}\mathbb{R}^3_+$; $\Omega_x(\omega)\in\{u_g(\omega)\textbf{e}_3, u_g(\omega)Q: Q\in \Gamma_{12}\}$ if $x\in \partial\mathbb{R}^3_+\setminus \{O\}$.

\item[(iii.b)] If
   $\sigma^2<2\alpha$ and $\alpha+d_1>0,\alpha+d_2=0,\alpha+d_3<0$, then there are infinitely many random equilibria:
   $$\{O, u_g\textbf{e}_2\}\bigcup \{u_gQ: Q\in \Gamma_{13}\}.$$
   Moreover, $\Omega_x(\omega)=\{u_g(\omega)\textbf{e}_2\}$ for any $x\in \textrm{Int}\mathbb{R}^3_+$; $\Omega_x(\omega)\in \{u_g(\omega)\textbf{e}_2, u_g(\omega)Q: Q\in \Gamma_{13}\}$ if $x\in \partial\mathbb{R}^3_+\setminus\{O\}$.
\item[(iv)] If
    $\sigma^2<2\alpha$ and $\alpha+d_1=0, \alpha+d_2=0, \alpha+d_3<0$, then there are infinitely many random equilibria: $$\{O\}\bigcup \{u_gQ: Q\in \Gamma_{12}\,\, \textrm{or} \,\, \Gamma_{13}\}.$$
    Moreover,  $\Omega_x(\omega)\in\{u_g(\omega)Q: Q\in \Gamma_{12}\}$ for any $x\in \textrm{Int}\mathbb{R}^3_+$; $\Omega_x(\omega)\in \{u_g(\omega)Q: Q\in \Gamma_{12}\bigcup\Gamma_{13}\}$ if $x\in \partial\mathbb{R}^3_+\setminus\{O\}$
\item[(v)] If
    $\sigma^2<2\alpha$ and $\alpha+d_i=0$ for all $i\in\{1,2,3\}$,
    then there are infinitely many random equilibria:
    $$\{O\}\bigcup \{u_gQ: Q\in \mathbb{S}^2_+\}.$$
     Moreover, $\Omega_x(\omega)=\{u_g(\omega)Q_x\}$  for any $x\in \mathbb{R}_+^3\setminus \{O\}$, where $Q_x:=\mathcal{L}(x)\bigcap  \mathbb{S}^2_+$.

\end{itemize}
\end{theorem}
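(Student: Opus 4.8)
The plan is to derive Theorem \ref{class1} directly from the deterministic classification in Theorem \ref{gd1} and the pull-back identity in Theorem \ref{attract}, as the statement indicates. Since every case assumes $\sigma^2<2\alpha$, Theorem \ref{attract}(ii) applies and gives, for every $x\in\mathbb{R}^3_+$ and almost every $\omega$,
$$\Omega_x(\omega)=u_g(\omega)\,\omega_d(x),$$
where $u_g$ is the random equilibrium of the logistic equation \eqref{sysg} from Lemma \ref{lem1}(ii). Concretely, $\Omega_x(\omega)=\{u_g(\omega)Q\}$ whenever $x\in\mathcal{A}(Q)$ for an equilibrium $Q\in\mathcal{E}$, and $\Omega_x(\omega)=u_g(\omega)\Gamma(h)$ whenever $x\in\mathcal{A}(\Gamma(h))$. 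Thus the whole theorem reduces to reading off $\omega_d(x)$ from Theorem \ref{gd1} and rescaling by the positive scalar $u_g(\omega)$.

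First I would settle the list of random equilibria in each regime. By Proposition \ref{thmes}(i), for each deterministic equilibrium $Q\in\mathcal{E}$ the process $Q_g(\omega):=u_g(\omega)Q$ is a random equilibrium of \eqref{sys5}; moreover $O=u_g(\omega)O$ is trivially one. Since $u_g(\omega)>0$ almost surely, the map $Q\mapsto u_g(\omega)Q$ is injective on $\mathcal{E}$, so distinct deterministic equilibria yield distinct random equilibria. Hence the number (five, four, or infinitely many) and the explicit description of the random equilibria in each case of Theorem \ref{class1} is exactly the image under $u_g(\omega)\cdot$ of the equilibrium set $\mathcal{E}$ computed in Proposition \ref{pro1} for the corresponding sign pattern of $(\alpha+d_1,\alpha+d_2,\alpha+d_3)$.

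Next I would run through the six sign regimes (i), (ii), (iii.a), (iii.b), (iv), (v). In each one Theorem \ref{gd1} gives $\omega_d(x)$ for $x\in\textrm{Int}\,\mathbb{R}^3_+$ and for $x\in\partial\mathbb{R}^3_+\setminus\{O\}$; substituting into $\Omega_x(\omega)=u_g(\omega)\omega_d(x)$ produces precisely the stated pull-back limit sets. For instance, in regime (i) Theorem \ref{gd1}(i) gives $\omega_d(x)=\Gamma(h)$ on $\Lambda_1(h)$, $\omega_d(x)=\{Q^*\}$ on $\mathcal{L}(Q^*)$, and $\omega_d(x)\in\{\textbf{e}_1,\textbf{e}_2,\textbf{e}_3\}$ on the boundary, which after multiplication by $u_g(\omega)$ yields the three alternatives in Theorem \ref{class1}(i); the remaining regimes are identical in spirit.

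The one step that is not purely bookkeeping is regime (i) with $x\in\Lambda_1(h)$, where $\omega_d(x)$ is a nondegenerate periodic orbit $\Gamma(h)$ rather than a single point. Here $\Omega_x(\omega)=u_g(\omega)\Gamma(h)$ is a random cycle, and I would invoke the second identity \eqref{Omegax-ugFh} of Theorem \ref{attract}(ii) (together with the invariance of $\Gamma(h)$ under $\Psi$ and the positivity of $u_g$) to justify that the rescaled closed curve is indeed the full pull-back $\Omega$-limit set. This is the main point to check with care; in all other cases $\omega_d(x)$ is a single equilibrium, so the conclusion follows by the same substitution, completing the proof.
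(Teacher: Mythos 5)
Your proposal is correct and follows essentially the same route as the paper, whose proof of Theorem \ref{class1} is precisely the observation that it follows from Theorem \ref{gd1} combined with Theorem \ref{attract} (with Propositions \ref{pro1} and \ref{thmes} supplying the lists of equilibria and their random counterparts). Your elaboration—applying $\Omega_x(\omega)=u_g(\omega)\omega_d(x)$ regime by regime, noting injectivity of $Q\mapsto u_g(\omega)Q$ from positivity of $u_g$, and invoking \eqref{Omegax-ugFh} for the periodic-orbit case—is exactly the bookkeeping the paper leaves implicit.
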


\clearpage
\appendix
\section{Appedix}
In this section, we collect some essential definitions and results on random dynamical systems in this paper.
\subsection{Preliminaries of Random dynamical system}\label{P_RDS}
 Let $X:=\mathbb{R}_+^3$ and $(W_t)_{t\in \mathbb{R}}$ be a two-side Brownian motion in $\mathbb{R}$ and
let $\Omega=\{\omega\in \mathcal{C}(\mathbb{R}, \mathbb{R}): \omega(0)=0\}$,
$\mathcal{F}$ the Borel $\sigma$-algebra of $\Omega$, $\mathbb{P}$ the measure induced by $W$
(i.e., Wiener measure).
It is known that the shift
$$\theta_t: \Omega\rightarrow \Omega,\,\, \theta_t\omega(s):=\omega(t+s)-\omega(t),\ \  s, t\in \mathbb{R},$$
is measure-preserving and ergodic with respect to $\mathbb{P}$,
and $\omega(t):=W_t(\omega)$ is a Brownian motion under $\mathbb{P}$.
Thus $(\Omega, \mathcal{F}, \mathbb{P}, (\theta_t)_{t\in \mathbb{R}})$ is a ergodic metric dynamical system. Let us define
$$\mathcal{F}_-=\sigma(\omega(t): t\le0),
\ \ \mathcal{F}_+=\sigma(\omega(t): t\ge0).$$
It is clear that $\mathcal{F}_-$ and $\mathcal{F}_+$ are independent.

A {\it $C^1$ random dynamical system with independent increments} on phase space $X$ over the metric dynamical system $(\Omega, \mathcal{F}, \mathbb{P}, (\theta_t)_{ t\in \mathbb{R}})$ is a measurable mapping
$$\Phi : \mathbb{R}_+\times \Omega \times X\mapsto X, \,\, (t, \omega, x)\mapsto \Phi(t, \omega, x),$$
such that
\begin{enumerate}
  \item[(i)] the mapping $(t,x)\mapsto \Phi(t, \omega, x)$ is continuous for all $\omega\in \Omega$, and the mapping $x\mapsto \Phi(t, \omega, x)$ is $C^1$ for all $t\ge 0$ and $\omega\in \Omega$,
  \item[(ii)] the mappings $\Phi(t, \omega):=\Phi(t, \omega, \cdot)$ satisfy the {\it cocycle} property:
  \begin{align}   \label{Phi-cocycle}
     \Phi(0, \omega)=id,\,\, \Phi(t+s, \omega)=\Phi(t, \theta_s\omega)\circ\Phi(s, \omega)
  \end{align}
for all $t, s\in \mathbb{R}_+$ and $\omega\in \Omega$,
\item[(iii)] if for all $s, t>0$, we have $\Phi(t, \omega)$ is independent of $\Phi(s, \theta_t\omega)$.
\end{enumerate}

For simplicity, we say that $(\theta, \Phi)$ or $\Phi$ is an RDS.

The {\it $\Omega$-limit set} $\Omega_x(\omega)$ of the pull-back trajectory $\Phi(t, \theta_{-t}\omega, x)$ is defined by
\begin{equation*}
\Omega_x(\omega):=\bigcap_{t>0}\overline{\bigcup_{\tau\geq t}\Phi(\tau, \theta_{-\tau}\omega, x)}.
\end{equation*}

\begin{definition} (Random equilibrium, \cite[Definition 1.7.1, p.38]{chu}).
A $\mathcal{F}$-measurable random variable $u: \Omega\mapsto X$ is said to be an {\it equilibrium}
(or, stationary solution) of the RDS $(\theta, \Phi)$ if it is invariant under $\Phi$:
$$\Phi(t, \omega, u(\omega))=u(\theta_t\omega), \ \ a.s.\ \omega\in \Omega,\ \  \forall \ t\geq 0.$$
\end{definition}

Note that the equilibrium is $\mathcal{F}_-$- measurable
if the RDS is generated by the solutions to stochastic differential equations \eqref{sys5}.

\begin{definition} \label{Crauel-def} (Crauel Random periodic solution, \cite[Definition 6]{ELR21}).
A Crauel random periodic solution (CRPS) is a pair $(\psi, T)$ consisting of $\mathcal{F}$-measurable functions $\psi: \Omega\times \mathbb{R}_+\to X$ and $T: \Omega\to \mathbb{R}$ such that for almost all $\omega\in \Omega$,
$$\psi(t, \omega)=\psi(t+T(\theta_{-t}\omega),\omega) \ \ \textrm{and}\ \ \Phi(t, \omega, \psi(t_0, \omega))=\psi(t+t_0, \theta_t\omega), \ \forall \ t, t_0\in \mathbb{R}_+.$$
\end{definition}

\begin{definition}\label{random cycle} (Attracting Random Cycle, \cite[Definition 4]{ELR21}).
We shall say that a random pull-back attractor $A$ with respect to a collection of sets $\mathcal{S}$ is an attracting random cycle if for almost all $\omega\in \Omega$ we have $A(\omega)\cong S^1$, i.e., every fiber is homeomorphic to the circle.
\end{definition}

For more details about random attractors see \cite{ELR21}.

Recall that the {\it derivative cocycle} of $C^1$ RDS $\Phi$ is the jacobian
$$D\Phi(t, \omega, x)=\frac{\partial \Phi(t, \omega)x}{\partial x}:=(\frac{\partial(\Phi(t, \omega) y)_i}{\partial y_j})|_{y=x}. $$

The {\it Lyapunov exponent} at $x\in \mathbb{R}^n$ in the direction $v\in \mathbb{R}^n$ is the following limit (if the limit exists)
$$\displaystyle \lim_{n\rightarrow \infty}\frac 1t \log \|D\Phi(t, \omega, x)v\|.$$

\subsection{Preliminaries of Markov semigroup 
and ergodicity}
A Markov transition function associated to the $C^1$ RDS $\Phi$ with independent increments is defined by
$$ P(t, x, A):=\mathbb{P}(\Phi(t, \cdot, x)\in A), \ x\in X, \ A\in\mathcal{B}(X),$$
which generates a Markov semigroup $(P_t)_{t\ge 0}$ by
$$P_t: \mathcal{B}_b(X)\to \mathcal{B}_b(X), \ P_tf(x):=\int_{X}f(z)P(t, x,dz), \ x\in X.$$
It is clear that this Markov semigroup $P_t$ is {\it stochastically continuous}:
$$\displaystyle \lim_{t\rightarrow 0} P_tf(y)=f(y), \ \ \forall \,\, f\in \mathcal{C}_b(\mathbb{R}^3)\,\, \textrm{and} \,\, y\in \mathbb{R}^3,$$
and {\it Feller}, that is, for any $f\in C_b(X)$ and $t\ge0$, one has $P_tf\in C_b(X)$.

This Markovian semigroup $P_t$
is called a {\it strong Feller} semigroup at time $t_0>0$ if
$P_{t_0}f\in \mathcal{C}_b(X)$ for any $f\in \mathcal{B}_b(X)$.
It is called {\it irreducible} at time $t_0$
if $P(t_0, x, A)>0$
for any non-empty open set $A$ and $x\in A$.

A probability measure $\mu$ on $X$ is called
{\it stationary} with respect to $P_t$,
if
\begin{equation*}
\int_{X} P(t, y,A)\mu(dy)=\mu(A),\ \  \forall\,\, t\geq0,\ A\in \mathcal{B}(X).
\end{equation*}
 Moreover, a stationary measure $\mu$ is {\it ergodic} if the
$(P_t, \mu)$-invariant functions are constants $\mu$-a.s.
A measure $\mu$ is said to be stationary for RDS $\Phi$ if it is stationary for the corresponding Markov semigroup $P_t$.

Let $X$ be a separable and locally compact Hausdorff space. The ergodicity can be derived from the following strongly mixing property.

\begin{theorem}\label{prato} (Strongly mixing, \cite[Theorem 3.4.2, Corollary 3.4.3]{prato})
Let $P_t, t\geq 0$, be a stochastically continuous Markovian semigroup on $X$
and $\mu$ a corresponding stationary measure.
Then, the following statements are equivalent:
\begin{itemize}
  \item [(i)] $\mu$ is strongly mixing;
  \item [(ii)] for any $\varphi\in L^2(X, \mu)$,
      \begin{equation*}\label{aaaa}
      \displaystyle \lim_{t \rightarrow\infty} P_t\varphi=\langle \varphi, 1\rangle \ \  \textrm{in} \  L^2(X, \mu).
      \end{equation*}
\end{itemize}
Moreover, if the corresponding transition probability measure satisfies
$$\displaystyle \lim_{t \rightarrow\infty}P(t, x, \cdot)=\mu\ \
\textrm{weakly} \,\,\textrm{in} \,\, \mathcal{P}(X),\ \forall \,\, x\in X,$$
then $\mu$ is strongly mixing. In particular, $\mu$ is ergodic.
\end{theorem}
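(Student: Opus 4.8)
The plan is to exploit the Hilbert-space contraction structure of the Markov semigroup $(P_t)$ on $L^2(X,\mu)$. First I would record two structural facts that drive everything. Since $\mu$ is stationary, $P_t1=1$, and $P_t$ is positivity preserving, Jensen's inequality gives $\|P_t\varphi\|_{L^2(\mu)}\le\|\varphi\|_{L^2(\mu)}$, so each $P_t$ is a contraction on $L^2(X,\mu)$ that fixes the constants. Consequently the scalar $\langle\varphi,1\rangle=\int_X\varphi\,d\mu$ is the mean of $\varphi$ (recall $\mu$ is a probability measure, so $\|1\|_{L^2(\mu)}=1$), the projection of $\varphi$ onto the constants is $\langle\varphi,1\rangle\,1$, and for the centered observable $\varphi_0:=\varphi-\langle\varphi,1\rangle$ one has $P_t\varphi-\langle\varphi,1\rangle=P_t\varphi_0$ with $t\mapsto\|P_t\varphi_0\|_{L^2(\mu)}$ nonincreasing.

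For the implication $(ii)\Rightarrow(i)$, I would simply note that norm convergence $P_t\varphi\to\langle\varphi,1\rangle$ implies weak convergence, and then by Cauchy--Schwarz, for every $\psi\in L^2(X,\mu)$,
\[
\bigl|\langle P_t\varphi,\psi\rangle-\langle\varphi,1\rangle\langle\psi,1\rangle\bigr|
=\bigl|\langle P_t\varphi-\langle\varphi,1\rangle,\psi\rangle\bigr|
\le\|P_t\varphi-\langle\varphi,1\rangle\|_{L^2(\mu)}\,\|\psi\|_{L^2(\mu)}\to0,
\]
which is precisely the decorrelation defining strong mixing. The reverse implication $(i)\Rightarrow(ii)$ is the genuinely nontrivial one: here I must upgrade the weak convergence $P_t\varphi_0\rightharpoonup0$ furnished by strong mixing to norm convergence $\|P_t\varphi_0\|_{L^2(\mu)}\to0$. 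The monotone limit $c:=\lim_{t\to\infty}\|P_t\varphi_0\|_{L^2(\mu)}$ exists by the contraction property, and the task is to show $c=0$; I would do this by combining the semigroup identity with the weak convergence applied along doubled times, controlling $\langle P_{t}\varphi_0,P_t\varphi_0\rangle$ through the Markov and contraction structure of $(P_t)$ as in Da Prato--Zabczyk. This weak-to-strong upgrade is the main obstacle, since for a general non-self-adjoint contraction weak convergence alone does not force norm convergence; it is the positivity and the unit-preserving property of the Markov semigroup that must be used crucially.

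For the final (\emph{moreover}) assertion I would argue more directly, and this is where the everywhere-in-$x$ weak convergence of the transition probabilities is essential. Fix $\varphi\in C_b(X)$. By hypothesis $P(t,x,\cdot)\rightharpoonup\mu$ for every $x$, so $P_t\varphi(x)=\int_X\varphi\,dP(t,x,\cdot)\to\int_X\varphi\,d\mu=\langle\varphi,1\rangle$ pointwise in $x$, with the uniform bound $|P_t\varphi(x)|\le\|\varphi\|_\infty$; since $\mu$ is a probability measure the constant $\|\varphi\|_\infty$ lies in $L^2(X,\mu)$, so the dominated convergence theorem yields $P_t\varphi\to\langle\varphi,1\rangle$ in $L^2(X,\mu)$. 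To pass from $C_b(X)$ to an arbitrary $\varphi\in L^2(X,\mu)$ I would use the density of $C_b(X)$ in $L^2(X,\mu)$ (valid since $X$ is separable, locally compact Hausdorff and $\mu$ is a Borel probability measure) together with the uniform contraction $\|P_t\|_{L^2\to L^2}\le1$: given $\varepsilon>0$, choose $\varphi_\varepsilon\in C_b(X)$ with $\|\varphi-\varphi_\varepsilon\|_{L^2(\mu)}<\varepsilon$, and the triangle inequality bounds $\|P_t\varphi-\langle\varphi,1\rangle\|_{L^2(\mu)}$ by $2\varepsilon$ plus a term involving $\varphi_\varepsilon\in C_b(X)$ that tends to $0$. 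This establishes $(ii)$, hence $(i)$. Finally, ergodicity is immediate: any $(P_t,\mu)$-invariant $\varphi$ satisfies $\varphi=P_t\varphi\to\langle\varphi,1\rangle$ in $L^2(X,\mu)$, forcing $\varphi=\langle\varphi,1\rangle$ to be constant $\mu$-a.e., so the only invariant functions are the constants.
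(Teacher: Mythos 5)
First, a point of comparison: the paper does not prove this statement at all --- it is quoted as a preliminary from Da Prato--Zabczyk \cite{prato} (Theorem 3.4.2, Corollary 3.4.3) --- so your attempt can only be judged against that source and on its own merits. Your treatment of the ``moreover'' assertion is complete and correct: $P(t,x,\cdot)\rightharpoonup\mu$ for every $x$ gives $P_t\varphi(x)\to\langle\varphi,1\rangle$ pointwise for $\varphi\in C_b(X)$ with the uniform bound $\|\varphi\|_\infty$, dominated convergence gives $L^2(\mu)$-convergence, and the Jensen contraction $\|P_t\|_{L^2\to L^2}\le 1$ plus density of $C_b(X)$ in $L^2(X,\mu)$ extends this to all of $L^2$; the ergodicity coda ($\varphi=P_t\varphi\to\langle\varphi,1\rangle$) is also fine. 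This is in fact the only part of the theorem the paper actually uses (in Propositions \ref{thmes} and \ref{uniqueness}), and there your argument is sound.

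The genuine gap is the direction $(i)\Rightarrow(ii)$, which you leave as a plan (``doubled times'', controlling $\langle P_t\varphi_0,P_t\varphi_0\rangle$ via positivity and $P_t1=1$). You correctly identify this as the obstacle, but the weak-to-strong upgrade you are hoping for is not merely hard --- it is false at this level of generality. Take the Koopman semigroup $P_t\varphi=\varphi\circ\theta_t$ of a deterministic, continuous, strongly mixing flow preserving $\mu$: the transition kernels are $\delta_{\theta_t x}$, so this is a stochastically continuous Markov semigroup, positivity preserving with $P_t1=1$, yet each $P_t$ is an isometry of $L^2(\mu)$, so $P_t\varphi_0\rightharpoonup 0$ while $\|P_t\varphi_0\|_{L^2(\mu)}=\|\varphi_0\|_{L^2(\mu)}$ never decays. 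Thus positivity and unit-preservation cannot close your argument --- the counterexample has both. The resolution is that in Da Prato--Zabczyk the convergence in $(ii)$ is \emph{weak} convergence in $L^2(X,\mu)$ (the paper's restatement is loose on this point), and with that reading the equivalence is soft: strong mixing is precisely the decorrelation $\langle P_t\varphi,\psi\rangle\to\langle\varphi,1\rangle\langle\psi,1\rangle$, first for indicators, then for all $L^2$ pairs by bilinearity, density of simple functions, and the uniform bound $\sup_{t}\|P_t\|_{L^2\to L^2}\le 1$ --- no upgrade is needed or available. Your ``moreover'' part is unaffected, since under the everywhere-in-$x$ hypothesis you genuinely obtain norm convergence, which is stronger than what the definition of strong mixing requires.
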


It is known that strong Feller
and irreducibility imply
uniqueness of stationary measures
(hence ergodicity)
and strongly mixing.
See, e.g., \cite[Theorem 4.2.1, p.43]{prato}.
Moreover, strong Feller
and irreducibility are equivalent
under conjugate mappings,
see \cite[Theorem 2.6]{jiang1}.

\subsection*{Acknowledgments}
The authors are grateful to Vahagn Nersesyan, Xiaodong Wang, Shennan Yin and Huaizhong Zhao for helpful discussions.
D. Xiao is partially supported by National Key R $\&$ D Program of China (No. 2022YFA1005900), the Innovation Program of
Shanghai Municipal Education Commission (No.2021-01-07-00-02-E00087) and the NSFC grants (No. 12271353 $\&$ 11931016).
D. Zhang is partially supported by NSFC (No.12271352, 12322108)
and Shanghai Rising-Star Program 21QA1404500.
\end{sloppypar}

\bibliographystyle{abbrv}
\bibliography{ref}

\end{document}